\numberwithin{equation}{section}
\newtheorem{thm}{Theorem}[section]
\newtheorem{cor}[thm]{Corollary}
\newtheorem{lma}[thm]{Lemma}
\newtheorem{cnj}[thm]{Conjecture}
\theoremstyle{definition}
\newtheorem{dfn}[thm]{Definition}
\newtheorem{ex}[thm]{Example}
\newtheorem{asm}[thm]{Assumption}
\theoremstyle{remark}
\newtheorem{rmk}[thm]{Remark}
\numberwithin{equation}{section}
\newcommand{\R}{{\mathbb{R}}}
\newcommand{\Z}{{\mathbb{Z}}}
\newcommand{\bigast}{\DOTSB\bigop[0.85]{\ast}}
\DeclareMathOperator*{\colim}{colim}
\DeclareMathOperator{\id}{id}
\DeclareMathOperator{\im}{im}
\DeclareMathOperator{\Cord}{Cord}
\DeclareMathOperator{\Skel}{Skel}
\newcommand{\suchthat}{\;\ifnum\currentgrouptype=16 \middle\fi|\;}
\DeclareRobustCommand\bigop[2][1]{
	\mathop{\vphantom{\sum}\mathpalette\bigop@{{#1}{#2}}}\slimits@
}
\newcommand{\bigop@}[2]{\bigop@@#1#2}
\newcommand{\bigop@@}[3]{
	\vcenter{
		\sbox\z@{$#1\sum$}
		\hbox{\resizebox{\ifx#1\displaystyle#2\fi\dimexpr\ht\z@+\dp\z@}{!}{$\m@th#3$}}
	}
}
\begin{document}

\title{Tangle contact homology}
\author{Johan Asplund}
\address{Department of Mathematics, Stony Brook University, 100 Nicolls Road, Stony Brook, NY 11794}
\email{johan.asplund@stonybrook.edu}

\begin{abstract}
	Knot contact homology is an ambient isotopy invariant of knots and links in $\mathbb R^3$. The purpose of this paper is to extend this definition to an ambient isotopy invariant of tangles and prove that gluing of tangles gives a gluing formula for knot contact homology. As a consequence of the gluing formula we obtain that the tangle contact homology detects triviality of tangles.
\end{abstract}

\maketitle
\tableofcontents
\section{Introduction}
	Knot contact homology is an ambient isotopy invariant of knots and links in $\R^3$ first defined combinatorially by Ng \cite{ng2005knoti,ng2005knotii}. It is known to be isomorphic to the Legendrian contact homology of the unit conormal bundle of the link, with homology coefficients \cite{ekholm2013knot}. It is known that Knot contact homology detects the unlink, cabled knots, composite knots and torus knots, see \cite{gordon2017knot}, \cite[Proposition 5.10]{ng2008framed} and \cite[Corollary 1.5]{cieliebak2017knot}. Moreover, an enhanced version of the knot contact homology is in fact a complete knot invariant \cite{ekholm2018a}. We refer the reader to \cite{ng2014a} and references therein for a complete survey of knot contact homology.

	The unit conormal bundle of a link $K \subset \R^3$ is the set
	\[
		\varLambda_K := \left\{(x,v) \in T^\ast \R^3 \suchthat x \in K, \; |v| = 1, \; \left\langle v, T_x K\right\rangle = 0\right\},
	\]
	for some metric on $\R^3$. The unit cotangent bundle of $\R^3$, denoted by $ST^\ast \R^3$, is a contact manifold when equipped with the one-form $\alpha := \eval[0]{(p_1dx + p_2dy + p_3dz)}_{ST^\ast \R^3}$ where $(x,y,z)$ are local coordinates in $\R^3$ and $(p_1,p_2,p_3)$ are local coordinates in the fiber directions. The unit conormal $\varLambda_K \subset ST^\ast \R^3$ is a Legendrian submanifold, meaning that $\eval[0]{\alpha}_{\varLambda_K} = 0$.

	In this paper we are concerned with the \emph{fully non-commutative} version of knot contact homology of links in $\R^3$ \cite{ekholm2013knot,cieliebak2017knot,ekholm2018a} which for our purposes is defined as the Chekanov--Eliashberg dg-algebra of $\varLambda_K$ in the unit cotangent bundle of $\R^3$ with loop space coefficients
	\[
		KCC^\ast_{\text{EENS}}(K) := CE^\ast(\varLambda_K, C_{-\ast}(\varOmega \varLambda_K)),
	\]
	see \cite{ekholm2017duality} for the definition of Chekanov--Eliashberg dg-algebras with loop space coefficients. The homology of the Chekanov--Eliashberg dg-algebra is a Legendrian isotopy invariant of Legendrian submanifolds in contact manifolds \cite{ekholm2005the,ekholm2007legendrian,ekholm2015legendrian,dimitroglou2016lifting,karlsson2020legendrian}. The differential of the Chekanov--Eliashberg dg-algebra is defined by counting punctured $J$-holomorphic disks in $\R \times ST^\ast \R^3$ with boundary in $\R \times \varLambda_K$. It was first studied independently by Chekanov and Eliashberg \cite{chekanov2002differential,eliashberg1998invariants} and it is part of the more general symplectic field theory package defined by Eliashberg--Givental--Hofer \cite{eliashberg2000introduction}.

	Using methods developed in \cite{asplund2020chekanov} we may understand the Chekanov--Eliashberg dg-algebra with loop space coefficients as the Chekanov--Eliashberg dg-algebra of a cotangent neighborhood of $\varLambda_K$, denoted by $N(\varLambda_K)$, together with a choice of \emph{handle decomposition} $h$ which encodes all Weinstein handles and their attaching maps, see \cref{dfn:handle_decomp_mfd} for details. Using the latter point of view, we define $KCC^\ast(K,h) := CE^\ast((N(\varLambda_K),h);T^\ast \R^3)$ using notation as in \cite{asplund2020chekanov}. For each choice of $h$, $KCC^\ast(K,h)$ is an ambient isotopy invariant of the link $K \subset \R^3$, and for a certain choice of $h$ it recovers $KCC^\ast_{\text{EENS}}(K)$, see \cref{thm:kch_and_kch_eens_single_top_handle}.

\subsection{Statement of results}
	The main construction in this paper is that of \emph{tangle contact homology}. It is the homology of a dg-algebra associated to an oriented tangle $T$ in $\R^3_{x\geq 0}$, and a choice of handle decomposition $h$ of a Weinstein neighborhood of its unit conormal bundle which is denoted by $N(\varLambda_T)$, see \cref{dfn:handle_decomp_pair}. The unit conormal bundle of $T$ is a Legendrian manifold-with-boundary $\varLambda_T \subset T^\ast \R^3_{x\geq 0}$. We show that tangle contact homology is independent of choices and an ambient isotopy invariant of $T$ (with fixed boundary). 

	Suppose that $K \subset \R^3$ is an oriented link and $H \subset \R^3$ is a smooth submanifold that is diffeomorphic to $\R^2$ such that $K$ intersects $H$ transversely. The hypersurface $H$ splits $K$ into two tangles $T_1,T_2 \subset \R^3_{x\geq 0}$, and conversely we say that $K$ is the gluing of $T_1$ and $T_2$.

	In the following, let $X \to Y$ denote an oriented binormal geodesic for $X,Y \in \left\{T_1,T_2,H\right\}$ in the appropriate copy of $\R^3_{x\geq 0}$. Let $\mathcal G_{T_i \to H \to T_j}$ denote the set of words of oriented binormal geodesic chords of the form
	\[
		T_i \to H \to \cdots \to H \to T_j.
	\]

	Let $\left\langle \mathcal S\right\rangle$ denote the free algebra on the set $\mathcal S$, $\ast$ denote free product of algebras, and $\ast_C$ denote amalgamated free product of algebras, see \cref{rmk:category_2} for details. Our main result is the following gluing formula for $KCC^\ast(K,h)$.
	\begin{thm}[\cref{thm:gluing_free_product}]\label{thm:intro_gluing_kch}
			Let $T_1$ and $T_2$ be two tangles in $\R^3_{x\geq 0}$ whose gluing is the link $K \subset \R^3$, where $H := \partial \R^3_{x\geq 0}$. Let $h_1$ and $h_2$ be choices of handle decomposition of $N(\varLambda_{T_1})$ and $N(\varLambda_{T_2})$, respectively. The free algebra
			\[
				\mathcal K := KCC^\ast(T_1,h_1) \ast_{KCC^\ast(\partial T, h_{\partial})} KCC^\ast(T_2,h_2) \ast \left\langle \mathcal G_{T_1 \to H \to T_2}\right\rangle \ast \left\langle \mathcal G_{T_2 \to H \to T_1}\right\rangle \subset KCC^\ast(K,h_1\#h_2),
			\]
			is a canonical subalgebra. Furthermore, fixing a generic choice of metric and almost complex structure we have the following:
			\begin{enumerate}
				\item $\mathcal K$ is dg-algebra when equipped with the same differential as the one in $KCC^\ast(K, h_1 \# h_2)$ such that $KCC^\ast(T_1,h_1), KCC^\ast(T_2,h_2) \subset \mathcal K$ are dg-subalgebras.
				\item There is a quasi-isomorphism of dg-algebras
				\[
					\mathcal K \cong KCC^\ast(K, h_1 \# h_2).
				\]
			\end{enumerate}
	\end{thm}
	\begin{rmk}\label{rmk:intro_main_thm}
		\begin{enumerate}
			\item Only knowing the dg-algebras $KCC^\ast(T_1,h_1)$ and $KCC^\ast(T_2,h_2)$ is not sufficient to recover $KCC^\ast(K,h_1\#h_2)$. To recover $KCC^\ast(K,h_1 \# h_2)$ we also need to know the set of oriented binormal geodesic chords between $T_i$ and $H$ and between $H$ and $T_j$ in the two copies of $\R^3_{x\geq 0}$.
			\item The two algebras $\left\langle \mathcal G_{T_1 \to H \to T_2}\right\rangle$ and $\left\langle \mathcal G_{T_2 \to H \to T_1}\right\rangle$ and their free product by themselves are not dg-algebras when equipped with the differential in $KCC^\ast(K,h_1\# h_2)$, and we do not know any way of expressing $KCC^\ast(K, h_1 \# h_2)$ as the colimit of \emph{dg-algebras} in general.
			\item See \cref{sec:explicit_handle_decomp} and in particular \cref{fig:handle_decomp1,fig:handle_decomp2} for a discussion which handle decompositions to consider in order to recover $KCC^\ast_{\text{EENS}}(K)$.
			\item \Cref{thm:intro_gluing_kch} and every construction in this paper still works if the link $K \subset \R^3$ is split into tangles by any smooth codimension $1$ submanifold of $\R^3$. A particularly important case is that of $S^2$, which is more amenable to concrete calculations (see \cref{sec:ex_unknot}). We have chosen to state results for $\R^3_{\geq 0}$ as it is the most ``symmetric''.
			\item \Cref{thm:intro_gluing_kch} is the simplest gluing formula to state, however, the general framework is significantly more flexible. For instance we can further decompose $\R^3_{x\geq 0}$ by splitting it further by hypersurfaces $\R^3_{x = a}$ for $a > 0$ which allows us to find gluing formulas for tangle contact homology itself, see \cref{thm:gluing_tch}.
		\end{enumerate}
	\end{rmk}
	We have the following geometric characterization of tangle contact homology.
	\begin{thm}[\cref{thm:top_desc_tch}]\label{thm:into_tch_char}
		Let $T$ be a tangle in $\R^3_{x\geq 0}$ where $H := \partial \R^3_{x \geq 0}$. Let $h$ be a choice of handle decomposition of $N(\varLambda_T)$. Let $\mathcal T$ denote the algebra generated by
		\begin{enumerate}
			\item Composable words of oriented binormal geodesic chords in $\R^3_{x\geq 0}$ of the form $T\to T$ or
			\[
				T \to H \to \cdots \to H \to T.
			\]
			\item Oriented binormal geodesic chords $\partial T \to \partial T$ in $H$.
		\end{enumerate}
		Then $\mathcal T \subset KCC^\ast(T,h)$ is a canonical subalgebra. For generic choices of metrics and almost complex structures $\mathcal T$ becomes a dg-algebra when equipped with the same differential as in $KCC^\ast(T,h)$ and there is a quasi-isomorphism
		\[
			\mathcal T \cong KCC^\ast(T,h).
		\]
	\end{thm}
	\begin{rmk}
		The construction of $KCC^\ast(T,h)$ makes sense for tangles of any dimension, and a version of \cref{thm:intro_gluing_kch} still holds in this case.
	\end{rmk}
	\begin{thm}[\cref{thm:untangle_detection}]\label{thm:kcc_detects_untangle_intro}
		Let $h$ be one of the two handle decompositions described in \cref{sec:explicit_handle_decomp}. Then $KCC^\ast(T,h)$ detects the $r$-component trivial tangle.
	\end{thm}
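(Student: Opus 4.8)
The plan is to deduce the statement from the gluing formula \cref{thm:intro_gluing_kch} together with the fact, recalled in the introduction, that the fully non-commutative knot contact homology of links detects the $r$-component unlink (via the cord algebra, by the cited work of Ng and of Cieliebak--Ekholm--Latschev--Ng). Write $U$ for the $1$-dimensional $r$-component untangle in $\R^3_{x\geq 0}$, write $O$ for the $r$-component unlink in $\R^3$, and let $h$ be one of the two handle decompositions of \cref{sec:explicit_handle_decomp}, chosen precisely so that the closed-up invariant recovers $KCC^\ast_{\text{EENS}}$ via \cref{thm:kch_and_kch_eens_single_top_handle}.

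First I would fix a geometric representative $T_0$ of the untangle in a second copy of $\R^3_{x\geq 0}$, glued to the first along $H := \partial \R^3_{x\geq 0}$, and a metric for which the gluing region is standard. Given a tangle $T$ with $\partial T = \partial U$, closing up $T$ with $T_0$ along $H$ produces a link $\widehat T \subset \R^3$, and $\widehat U = O$. Now suppose $KCC^\ast(T,h) \cong KCC^\ast(U,h)$. Applying \cref{thm:intro_gluing_kch} to the pairs $(T,T_0)$ and $(U,T_0)$ with handle decomposition $h \# h_0$ presents both $KCC^\ast(\widehat T, h\# h_0)$ and $KCC^\ast(\widehat U, h\# h_0)$ as amalgamated free products of the respective tangle algebras, the common factor $KCC^\ast(T_0,h_0)$, and the free algebras on the geodesic-chord words $\mathcal G_{T\to H\to T_0}$ and $\mathcal G_{T_0\to H\to T}$, all over $KCC^\ast(\partial T, h_\partial)$, with differential given by the count of punctured holomorphic disks. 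Matching the $T$-factor with the $U$-factor by hypothesis, and matching the remaining factors and the differential by the geometric choices above, yields $KCC^\ast(\widehat T, h\# h_0) \cong KCC^\ast(\widehat U, h\# h_0)$, hence $KCC^\ast_{\text{EENS}}(\widehat T) \cong KCC^\ast_{\text{EENS}}(O)$. By the detection result for links, $\widehat T = O$; and since a tangle whose closure with the trivial tangle is the unlink must itself be the untangle, we conclude that $T$ is the $r$-component untangle.

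The main obstacle is the first point of the remark following \cref{thm:intro_gluing_kch}: $KCC^\ast(\widehat T, h\# h_0)$ is \emph{not} determined by $KCC^\ast(T,h)$ and $KCC^\ast(T_0,h_0)$ alone, since one also needs the oriented binormal geodesic chords between $T$ (respectively $T_0$) and $H$ in the two copies of $\R^3_{x\geq 0}$ and the part of the differential they carry. So the heart of the argument is to push $T_0$ far from $H$ and to choose the metric near $H$ so that these chords and the differential they contribute are literally those occurring in the untangle closure, and to invoke the geometric characterization \cref{thm:top_desc_tch} to see that the $T$-factor already accounts for every other generator of $KCC^\ast(\widehat T, h\# h_0)$; I expect it is exactly because this requires the full geometric and handle data, rather than only the quasi-isomorphism type of $KCC^\ast(T,h)$, that the detection is merely \emph{weak}. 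A second, smaller point to handle with care is the closing topological step --- that a tangle in $\R^3_{x\geq 0}$ whose trivial closure is the $r$-component unlink is the untangle --- which follows by an innermost-disk argument applied to a spanning system of disks for the unlink intersected with the splitting hypersurface $H$.
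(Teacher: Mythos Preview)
Your overall strategy is exactly the paper's: close up the tangle with a trivial tangle $T_0$, apply the gluing formula \cref{thm:intro_gluing_kch}, invoke unlink detection for $KCC^\ast_{\text{EENS}}$ (this is \cref{lma:kch_detects_unlink}, proved via the same Cieliebak--Ekholm--Latschev--Ng argument you cite), and finish with the topological fact that a tangle whose trivial closure is the unlink is itself trivial (\cref{lma:unique_gluing_untangle}). The contrapositive you run is logically equivalent to the paper's direct argument.

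The one point where your exposition goes astray is the handling of the extra factors $\mathcal G_{T\to H\to T_0}$ and $\mathcal G_{T_0\to H\to T}$. You propose to ``push $T_0$ far from $H$ and choose the metric near $H$'' so that these chords match those arising from $U$. But the chords in $\mathcal G_{T\to H}$ are binormal geodesics from $T$ to $H$ in the \emph{first} copy of $\R^3_{x\geq 0}$; they depend on the geometry of $T$ itself and cannot be made to agree with those of $U$ by adjusting $T_0$ or the metric on the other side. This is precisely why the paper does \emph{not} attempt such an arrangement: instead, the isomorphisms $\mathcal G_{T\to H}\cong\mathcal G_{U\to H}$ and $\mathcal G_{H\to T}\cong\mathcal G_{H\to U}$ (as modules over $\mathcal G_H$) are taken as the \emph{hypothesis} built into the definition of weak detection (\cref{dfn:weak_detection}), and the passage from matched pieces to matched closures is packaged as \cref{lma:weak_gluing}. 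You do intuit this at the end (``I expect it is exactly because\ldots that the detection is merely weak''), but your middle paragraph reads as if you still intend to \emph{prove} the matching geometrically. Once you realize the matching of chord modules is assumed, not arranged, your argument becomes correct and coincides with the paper's.

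A minor remark: your innermost-disk sketch for the closing topological step is fine, though the paper's \cref{lma:unique_gluing_untangle} argues more directly by extending the half-disks bounding $T_0$ across $H$ to a Seifert surface.
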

	\Cref{thm:kcc_detects_untangle_intro} is a consequence of \cref{thm:intro_gluing_kch}, and by the property that the fully non-commutative knot contact homology detect the unlink. In fact, we are also utilizing a gluing formula for tangle contact homology itself described in \cref{rmk:intro_main_thm}(5).

\subsection{Construction and method of proof}\label{sec:method}
	We first give a rough description of the definition of tangle contact homology of a tangle $T \subset \R^3_{x\geq 0}$. Taking the unit conormal bundle of $T$ (denoted by $\varLambda_T$) naturally yields a Legendrian submanifold in the contact boundary of the (open) Weinstein sector $T^\ast \R^3_{x\geq 0}$. This sector corresponds to the Weinstein pair $(B^6, T^\ast H)$ where $H := \partial \R^3_{x\geq 0}$ and $\varLambda_T$ is viewed as a Legendrian submanifold in $\partial B^6$ with Legendrian boundary in $\partial T^\ast H$. After picking a handle decomposition $h_T$ of $N(\varLambda_T)$ which is a Weinstein neighborhood of $\varLambda_T$, the dg-algebra $KCC^\ast(T,h_T)$ is defined as the Chekanov--Eliashberg dg-algebra of the pair $(N(\varLambda_T),h_T)$ in $(B^6, T^\ast H)$, see \cref{sec:kch_tangles} for details. This is the natural definition of what the Chekanov--Eliashberg dg-algebra with loop space coefficients would be for a Legendrian with boundary.

	In order to prove \cref{thm:intro_gluing_kch} we generalize the gluing formulas for Chekanov--Eliashberg dg-algebras proven in \cite{asplund2020chekanov,asplund2021simplicial}, to hold for Chekanov--Eliashberg dg-algebras with loop space coefficients. We now give a rough sketch of the gluing formula.

	First consider two Weinstein pairs $(X_1,V)$ and $(X_2,V)$, we glue $X_1$ and $X_2$ together along their common Weinstein hypersurface $V^{2n-2} \hookrightarrow \partial X^{2n}$ to obtain a new Weinstein manifold denoted by $X_1 \#_V X_2$. This operation is called Weinstein connected sum \cite{avdek2021liouville,eliashberg2018weinstein,alvarez2020positive}, and gives gluing formulas for the Chekanov--Eliashberg dg-algebra with field coefficients of the Legendrian attaching link \cite{asplund2020chekanov,asplund2021simplicial}.

	For the generalization of the above, assume we have two Weinstein pairs $(X_1,W_1)$ and $(X_2,W_2)$ where $W_i = V_i^{2n-2} \#_{Q^{2n-4}} (V')^{2n-2}$ for $i\in \left\{1,2\right\}$. We glue $X_1$ and $X_2$ along the common Weinstein subhypersurface $(V')^{2n-2} \subset W_i^{2n-2}$. The result is a new Weinstein pair $(X, V)$, where $X = X_1 \#_{V'} X_2$, and $V = V_1 \#_Q V_2$, see \cref{fig:gluing_pairs_intro}. We show that this type of gluing yields a gluing formula for the Chekanov--Eliashberg dg-algebra of the Weinstein hypersurface $(V,h)$ where $h$ is a handle decomposition of $V$, which for certain choices of $h$ is the Chekanov--Eliashberg dg-algebra with loop space coefficients, see \cref{sec:descent_hypersurfaces} for details.

	\begin{figure}[!htb]
		\centering
		\includegraphics[scale=0.8]{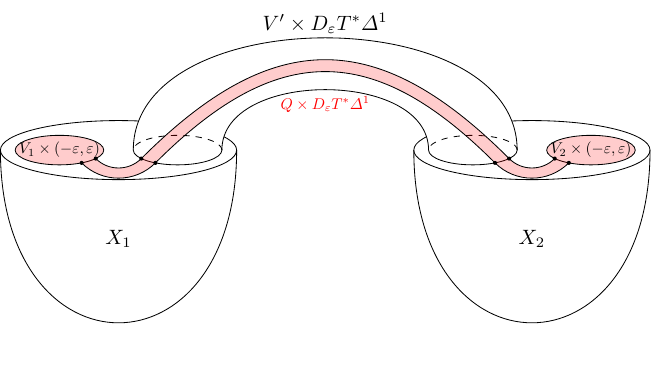}
		\caption{The Weinstein pair $(X_1 \#_{V'} X_2, V_1 \#_Q V_2)$ obtained by gluing together the Weinstein pairs $(X_1,W_1)$ and $(X_2, W_2)$ along their common Weinstein subhypersurface $V' \subset W_1, W_2$.}\label{fig:gluing_pairs_intro}
	\end{figure}

\subsection{Future directions}
	Knot contact homology of (framed oriented) knots in $\R^3$ is closely related to several smooth knot invariants. Some examples include string homology, the cord algebra, the Alexander polynomial and the augmentation polynomial which is closely related to the $A$-polynomial (and also conjectured to be related to a specialization of the HOMFLY-PT polynomial) \cite{ng2005knotii,ng2008framed,cieliebak2017knot,ekholm2018knot}. The knot group and its peripheral subgroup can be extracted from the enhanced version of the knot contact homology \cite{ekholm2018a}.

	In view of \cref{thm:into_tch_char}, it is natural to propose the following conjecture, extending the scope of \cite[Theorem 1.1]{cieliebak2017knot}.
	\begin{cnj}\label{cnj:cord_alg}
		If $T$ is a framed oriented $r$-component tangle in $\R^3_{x\geq 0}$ there exists a choice of handle decomposition $h$ of $N(\varLambda_T)$ such that we have an isomorphism of $\Z[\mu_1^{\pm 1}, \ldots, \mu_r^{\pm 1}]$-algebras
		\[
			KCH^0(T,h) \cong \Cord(T,H).
		\]
	\end{cnj}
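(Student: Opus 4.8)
The plan is to combine the degree-zero computation of the Chekanov--Eliashberg dg-algebra with the topological description of $KCC^\ast(T,h)$ from \cref{thm:into_tch_char}. For links in $\R^3$, Cieliebak--Ekholm--Latschev--Ng \cite{cieliebak2017knot} identify $KCH^0(K)$ with the framed cord algebra $\Cord(K)$ by showing that the degree-zero generators of the dg-algebra correspond exactly to homotopy classes of cords (read off from the binormal geodesic chords, after stabilizing to remove the dependence on the metric), and that the degree-one relations imposed by the differential are precisely the skein-type relations defining $\Cord(K)$. The first step is therefore to set up the relative analogue: fix the handle decomposition $h$ of $N(\varLambda_T)$ that realizes $KCC^\ast(T,h)$ as the Chekanov--Eliashberg dg-algebra with loop space coefficients (the one used for the link case in \cref{sec:explicit_handle_decomp}), and use \cref{thm:into_tch_char} to present $KCC^\ast(T,h)$ as generated by composable words of oriented binormal geodesic chords of the form $T\to T$, $T\to H\to\cdots\to H\to T$, together with chords $\partial T\to\partial T$ in $H$.

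Next I would analyze which of these generators lie in degree zero. The grading is by a Conley--Zehnder/Maslov-type index, and as in \cite{ng2005knotii,cieliebak2017knot} the degree-zero part is spanned by the ``short'' chords — those binormal geodesics of index zero — together with the loop-space coefficient ring, which here is $\Z[\mu_1^{\pm1},\ldots,\mu_r^{\pm1}]$ recording the $r$ meridian classes of the $r$ components of the tangle. The claim to verify is that a $\Z[\mu_i^{\pm1}]$-basis of $KCH^0(T,h)$ is given by homotopy classes of cords of $T$ relative to $H$ in the sense defined just before the conjecture, i.e.\ arcs with endpoints on $T$ or on $H$, avoiding $T$ in their interior, modulo homotopy. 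This is the relative version of the ``cords $\leftrightarrow$ degree-zero generators'' dictionary: the $T\to T$ chords give the usual cords, and the mixed words $T\to H\to\cdots\to H\to T$ give cords that bounce off the boundary wall $H$, which is exactly what the extra generators in $\Cord(T,H)$ are meant to encode.

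Then I would check that the differential restricted to degree-one generators produces precisely the defining relations of $\Cord(T,H)$. In the closed case the relevant relations come from counting rigid holomorphic disks with one positive puncture at an index-one chord; the corresponding combinatorial relations are (after Ng's computation) the skein relation, the ``framing'' relation involving $\mu_i$, and the trivial-cord relation. For the relative setting one repeats this disk count, now allowing disks whose boundary can touch the Weinstein hypersurface $T^\ast H$; by the gluing formula \cref{thm:gluing_free_product} — or rather by the local model near $H$ that underlies it — the contributions split into the interior pieces (reproducing the usual relations for $\Cord(T)$) and boundary pieces (which by construction are exactly the additional relations built into $\Cord(T,H)$, matching the free-product-with-$\mathcal G_{T\to H\to T}$ structure of the statement). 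Finally one checks the map is a ring homomorphism of $\Z[\mu_1^{\pm1},\ldots,\mu_r^{\pm1}]$-algebras and is bijective on generators and relations, hence an isomorphism.

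The main obstacle I anticipate is making the relative holomorphic-disk count rigorous enough to pin down the boundary relations — that is, proving that \emph{no} unexpected disks with boundary on $T^\ast H$ contribute in degree one, and that the ones that do contribute give exactly the cord relations of $\Cord(T,H)$ rather than something weaker. In the closed case \cite{cieliebak2017knot} this rests on a careful SFT compactness-and-gluing argument plus Ng's explicit combinatorial model; transplanting it to the Weinstein-sector setting requires the analytic foundations for Chekanov--Eliashberg dg-algebras of Legendrians with boundary (as developed in the cited \cite{asplund2020chekanov,asplund2021simplicial} framework), and it is precisely the interaction of the differential with the hypersurface that is delicate. This is also why the statement is only a conjecture: the degree-zero identification of generators with cords is essentially formal given \cref{thm:into_tch_char}, but certifying the relations needs the full force of the relative disk analysis, which is not carried out here.
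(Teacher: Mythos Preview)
The statement you are addressing is labeled as a \emph{conjecture} in the paper, and the paper does not provide a proof of it. There is therefore no ``paper's own proof'' to compare your proposal against; the author only offers a remark afterwards suggesting which handle decomposition $h$ ought to work (the one with a single top handle per component, as in \cref{fig:handle_decomp2}) and speculating on a gluing formula for the cord algebra.

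Your proposal is not a proof but a plausible strategy sketch, and you correctly identify this yourself in the final paragraph. The outline you give---identify degree-zero generators with cords via the geometric description of \cref{thm:into_tch_char}, then match the degree-one differential against the skein relations by adapting the Cieliebak--Ekholm--Latschev--Ng argument to the relative setting---is the natural approach and is consistent with the author's own expectations. But the substantive step, namely the rigorous relative disk count showing that the differential on degree-one generators yields exactly the relations of $\Cord(T,H)$ and nothing more, is precisely what is missing and is the reason the statement remains conjectural. Your proposal does not close this gap; it names it.
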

		Here $\Cord(T,H)$ denotes the framed cord algebra of $T$, relative to $H$. It is defined as the algebra of cords that are allowed to have none, either, or both endpoints on $H$, modulo homotopy and Skein relations (compare with the usual definition in \cite[Section 4.3]{ng2005knotii} and \cite[Section 2.1]{ng2008framed}).
	\begin{rmk}
		A refinement of \cref{cnj:cord_alg} is that the choice of handle decomposition mentioned should be the one depicted in \cref{fig:handle_decomp2}. There is a certain asymmetry in choices of handle decompositions $h_1$ and $h_2$ to recover $KCC^\ast_{\text{EENS}}(K)$ as described in \cref{sec:explicit_handle_decomp}. Thus we expect to find a gluing formula similar to that of \cref{thm:intro_gluing_kch}, recovering $\Cord(K)$ from $\Cord(T_1,H)$ and $\widetilde{\Cord}(T_2,H)$ where $\widetilde{\Cord}(T_2,H) := KCH^0(T_2,h_2)$ with $h_2$ chosen as in \cref{fig:handle_decomp1}. We also expect there to be a suitable topological interpretation of $\Cord(T_1,H)$ and $\widetilde{\Cord}(T_2,H)$.
	\end{rmk}
	\subsection{Related work}
		Dattin has defined a sutured Legendrian isotopy invariant of sutured Legendrian submanifolds in sutured contact manifolds called cylindrical sutured homology \cite{dattin2022wrapped}. In case the sutured contact manifold is balanced, we expect that the dg-algebra of $\varLambda_T \subset \partial T^\ast \R^3_{x\geq 0}$ defined by Dattin is quasi-isomorphic to $KCC^\ast(T,h)$.

		Let $\varSigma$ be a surface without boundary and let $B \subset \varSigma \times [-1,1]$ be a braid. It is proven that a certain quotient of cylindrical sutured homology of $\varLambda_B \subset \partial T^\ast(\varSigma \times [-1,1])$ together with a product structure gives a complete invariant of braids \cite{dattin2022wrapped,dattinsutured}.
	\subsection*{Outline}
		In \cref{sec:simplicial_decomps_for_pairs} we generalize results from \cite{asplund2021simplicial}. Namely we construct simplicial decompositions of Weinstein pairs, and prove that the Chekanov--Eliashberg dg-algebra of top attaching spheres satisfies a gluing formula. This specializes to gluing formulas for the Chekanov--Eliashberg dg-algebra with loop space coefficients.

		In \cref{sec:tch_and_gluing_formulas} we define tangle contact homology for tangles in $\R^3_{x\geq 0}$. We apply the machinery of \cref{sec:simplicial_decomps_for_pairs} to show that gluing of tangles induces gluing formulas for the tangle contact homologies.

		In \cref{sec:ex_unknot} we compute tangle contact homology in some examples and end with a calculation of the knot contact homology of the unknot via the gluing formula.
	\subsection*{Acknowledgments}
		The author thanks Tobias Ekholm and Côme Dattin for helpful discussions, and Lenhard Ng for his correspondence. This paper grew out as an offshoot from on-going collaboration with William E. Olsen, to whom the author extends a special thanks to for carefully reading earlier drafts of this paper. Finally, the author thanks the anonymous referee whose helpful comments improved the exposition of this paper. The author was supported by the Knut and Alice Wallenberg Foundation.
\section{Simplicial decompositions for Weinstein pairs}\label{sec:simplicial_decomps_for_pairs}
	In this section we generalize the notion of a simplicial decomposition of a Weinstein manifold introduced in \cite{asplund2021simplicial} to Weinstein pairs. We first review some basic definitions we use throughout the paper.

	A \emph{Liouville domain} is a pair $(X^{2n},\lambda)$ of a smooth $2n$-dimensional manifold-with-boundary $X$ and a one-form $\lambda$ such that $\omega := d \lambda$ is symplectic and the $\omega$-dual of $\lambda$ (called the \emph{Liouville vector field}) is outwards pointing along $\partial X$. A \emph{Liouville manifold} is an exact symplectic manifold $(X, \lambda)$ which admits an exhaustion by successively larger Liouville domains, such that the Liouville vector field is complete. The \emph{skeleton} of a Liouville manifold, denoted by $\Skel X$, is the subset which does not escape every successive exhausting Liouville domain, under the flow of the Liouville vector field. A \emph{Weinstein manifold} is a Liouville manifold for which the Liouville vector field is gradient-like for a Morse function on $X$. It is well-known that any Weinstein manifold can equivalently be obtained by successive attachments of standard Weinstein handles of index $k \in \left\{0,\ldots,n\right\}$, see \cite{weinstein1991contact}. A Weinstein manifold $X$ is called \emph{subcritical} if it can be built using no Weinstein handles of index $n$.

	\begin{dfn}[Weinstein hypersurface]
		Let $(X^{2n},\lambda)$ be a $2n$-dimensional Weinstein manifold. A \emph{Weinstein hypersurface} consists of a $(2n-2)$-dimensional Weinstein manifold $(V^{2n-2},\lambda_V)$ together with a Weinstein embedding $(V,\lambda_V) \hookrightarrow (X \setminus \Skel X, \lambda)$ such that the induced map $V \to \partial X$ is an embedding. We will use the shorthand $V \hookrightarrow \partial X$ to denote a Weinstein hypersurface.
	\end{dfn}
	\begin{dfn}[Weinstein pair]
		A \emph{Weinstein pair} is a tuple $(X, V \hookrightarrow \partial X)$ of a Weinstein manifold together with a Weinstein hypersurface.
	\end{dfn}
	\begin{dfn}[Liouville sector {\cite[Definition 1.1]{ganatra2020covariantly}}]
		A Liouville sector is a Liouville manifold-with-boundary $(X, \lambda, Z)$ for which there is a function $I \colon \partial X \to \R$ such that:
		\begin{itemize}
			\item $I$ is \emph{linear at infinity}, meaning $ZI = I$ outside a compact set, where $Z$ denotes the Liouville vector field.
			\item The Hamiltonian vector field $X_I$ of $I$ is outward pointing along $\partial X$.
		\end{itemize}
	\end{dfn}
	For every Liouville sector $X$, one can modify the Liouville form to obtain a Liouville pair $(\overline X, F)$ called the \emph{convexification} of $X$, see \cite[Section 2.7]{ganatra2020covariantly}. Moreover, up to a contractible choice, there is a one-to-one correspondence between Liouville sectors and Liouville pairs \cite[Lemma 2.32]{ganatra2020covariantly}.
	\begin{dfn}[Weinstein sector]
		A \emph{Weinstein sector} is a Liouville sector $X$ such that its convexification $(\overline X, F)$ is a Weinstein pair up to deformation.
	\end{dfn}
	We will use the notions of Weinstein sector and Weinstein pair interchangeably throughout this paper.
	\begin{dfn}[Handle decomposition of a Weinstein manifold]\label{dfn:handle_decomp_mfd}
		Let $X^{2n}$ be a $2n$-dimensional Weinstein manifold. By a \emph{handle decomposition} of $X$, we mean a collection of tuples $(X_k,\varSigma_k)$ for $k\in \left\{0,\ldots,n-1\right\}$ where 
		\begin{itemize}
			\item $X_k^{2n}$ is a $2n$-dimensional subcritical Weinstein manifold that only has handles of index $< k$.
			\item There are Weinstein embeddings $X_0^{2n} \subset X_1^{2n} \subset \cdots \subset X_{n-1}^{2n} \subset X^{2n}$.
			\item $\varSigma_k \subset \partial X_k$ is a disjoint union of $k$-dimensional isotropic spheres.
		\end{itemize}
	\end{dfn}
	\begin{rmk}\label{rmk:handle_decomp}
		\begin{enumerate}
			\item The isotropic submanifolds $\varSigma_k$ are the attaching spheres for the $k$-dimensional Weinstein handles used to construct $X$, and the embedding $\varSigma_k \hookrightarrow \partial X_k$ canonically determines the index $k+1$ Weinstein handles of $X$. By abuse of notation, we may refer to (the embedding of) $\varSigma_k$ as \emph{the} index $k+1$ Weinstein handles of $X$.
			\item For $k = n-1$, we have that $\varSigma_{n-1} \subset \partial X_{n-1}$ are Legendrian submanifolds, and determine the \emph{top} (or \emph{critical}) Weinstein handles of $X$.
			\item Every construction in this paper only depends on the tuple $X_0 := (X_{n-1},\varSigma_{n-1})$ which we call the \emph{subcritical part of $X$}. This notation and terminology will not cause any confusion since the Weinstein manifold $X_k^{2n}$ for $k < n-1$ will play no role in this paper.
		\end{enumerate}
	\end{rmk}
	\begin{dfn}[Handle decomposition of a Weinstein pair]\label{dfn:handle_decomp_pair}
		Let $(X^{2n},V^{2n-2})$ be a Weinstein pair. A handle decomposition of $(X,V)$ consists of a handle decomposition $\{(X_k,\varSigma_k)\}_{k=0}^{n-1}$ of $X$ and a handle decomposition $\{(V_k,\sigma_k)\}_{k=0}^{n-2}$ of $V$.
	\end{dfn}
	\begin{rmk}
		The Weinstein embedding $V \hookrightarrow \partial X$ induces Weinstein embeddings $V_k \hookrightarrow \partial X$ for each $k \in \left\{0,\ldots,n-2\right\}$, and we do not require any further compatibilities between the two handle decompositions of $X$ and $V$ in \cref{dfn:handle_decomp_pair}.
	\end{rmk}
	By following \cite[Section 2.3]{chantraine2017geometric} we can also define Weinstein sectors as the result of consecutive attachments of standard Weinstein handles and Weinstein half-handles. The standard model for Weinstein half-handles is revealed by studying Morse theory on manifolds with boundary, see \cite{borodzik2016morse} and \cite[Section 2.4]{kronheimer2007monopoles}. The boundary at infinity of a Weinstein sector is a contact manifold with boundary and can be made into a sutured contact manifold, see \cite[Section 2.4]{dattin2022sutured} and \cite[Section 4.2]{colin2011sutures}. From this point of view, the attaching locus of a Weinstein half-handle is a Legendrian disk with Legendrian boundary in the suture of the sutured contact manifold. The procedure of convexification will turn this Legendrian disk into the core disk of a Weinstein handle of the Weinstein hypersurface. The embedding of each attaching locus of the Weinstein half-handles together determine the Weinstein hypersurface.
	\begin{ex}\label{ex:handle_decomp_sector}
		If $Q$ is a compact smooth manifold-with-boundary, then $T^\ast Q$ is a Weinstein sector. If we pick a Morse function $f$ on $Q$ that restricts to a Morse function along $\partial Q$, then $f$ determines a handle decomposition of $Q$ as a manifold-with-boundary (see \cite[Sections 2.1 and 2.2]{borodzik2016morse}). Such a handle decomposition of $Q$ induces a Weinstein handle decomposition of $T^\ast Q$ as described above.
	\end{ex}

	\subsection{Construction of simplicial decompositions for Weinstein pairs}\label{sec:cns_simplicial_decomp_pairs}
		We assume in the following that the reader is familiar with the construction of a simplicial decomposition of a Weinstein manifold as defined in \cite[Section 2.2]{asplund2021simplicial}. 
		\begin{rmk}
			Recall that a Weinstein pair $(X^{2n},P^{2n-2})$ corresponds to a Weinstein sector $X'$ via convex completion \cite[Section 2.7]{ganatra2020covariantly}. We may thus think about constructions in this section as a generalization of the simplicial decomposition of a Weinstein manifold as constructed in \cite{asplund2021simplicial} to a simplicial decomposition of a Weinstein sector.
		\end{rmk}
		In the following the superscript $X^{(k)}$ is used to indicate that $X$ has codimension $2k$ for $k \in \Z_{\geq 0}$. We give a brief recap of the definition of a simplicial decomposition and refer to \cite[Section 2.2]{asplund2021simplicial} for details. A simplicial decomposition of a Weinstein manifold $X$ is a tuple $(C, \boldsymbol V)$ where $C$ is a simplicial complex and $\boldsymbol V$ is a set containing one Weinstein manifold $(V_{\sigma_k}^{(k)},\lambda_{\sigma_k})$ for each $k$-face $\sigma_k\in C$ (ranging over all $k$) and a certain Weinstein hypersurface associated to each $k$-face $\sigma_k\in C$ (ranging over all $k$). Associated to each $V_{\sigma_k}^{(k)}$ is the ``basic building block''
		\[
			\left(V_{\sigma_k}^{(k)} \times T^\ast \varDelta^k, \lambda_{\sigma_k} + \sum_{i=1}^k (2x_i dy_i + y_i dx_i)\right),
		\]
		where $(y_1,\ldots,y_k)$ are coordinates along $\varDelta^k$ and $(x_1,\ldots,x_k)$ are coordinates in the fiber directions of $T^\ast \varDelta^k$. The basic building block is a Weinstein cobordism with negative end $V \times T^\ast \varDelta^k|_{\partial \varDelta^k}$, see \cite[Section 2.1]{asplund2021simplicial} for more details. We define $\# \boldsymbol V$ to be the \emph{gluing} of all the building blocks $V_{\sigma_k}^{(k)} \times T^\ast \varDelta^k$ using certain gluing maps induced by the Weinstein hypersurfaces in $\boldsymbol V$, see \cite[Section 2.2.1]{asplund2021simplicial}. The condition required for $(C, \boldsymbol V)$ to be a simplicial decomposition of $X$ is that there is a Weinstein isomorphism $X \cong \# \boldsymbol V$, meaning a strict symplectomorphism that also preserves the Weinstein Morse functions.

		\begin{dfn}\label{dfn:V_supset}
			Let $(C, \boldsymbol V)$ be a simplicial decomposition. For a $k$-face $\sigma_k\in C$ we define $\boldsymbol V_{\supsetneq \sigma_k} \subset \boldsymbol V$ (and $\boldsymbol V_{\supset \sigma_k} \subset \boldsymbol V$) to be the subset consisting of only those Weinstein manifolds $V_{\sigma_i}\in \boldsymbol V$ for which $\sigma_i \supsetneq \sigma_k$ (and $\sigma_i \supset \sigma_k$), and the corresponding Weinstein hypersurfaces.
		\end{dfn}
		\begin{dfn}[Hypersurface inclusion of simplicial decompositions]\label{dfn:hypersurface_inclusion}
			Let $(X,P)$ be a Weinstein pair. Let $(C, \boldsymbol V)$ be a simplicial decomposition of $X^{2n}$ and let $(C', \boldsymbol V')$ be a simplicial decomposition of $P^{2n-2}$. A \emph{hypersurface inclusion of simplicial decompositions} $(C', \boldsymbol V')\hookrightarrow (C, \boldsymbol V)$ consists of
			\begin{enumerate}
				\item A simplicial subcomplex $i \colon C' \hookrightarrow C$. When no confusion can arise we use the notation $\sigma_k := i(\sigma'_k)$ for each $\sigma_k' \in C'$.
				\item A Weinstein hypersurface 
				\[
					\#(\boldsymbol V'_{\supset \sigma_k'} \sqcup_i \boldsymbol V_{\supsetneq \sigma_k}) \hookrightarrow \partial V_{\sigma_k},
				\]
				for each $\sigma_k' \in C'$, where
				\[
					\boldsymbol V'_{\supset \sigma_k'} \sqcup_i \boldsymbol V_{\supsetneq \sigma_k} := \boldsymbol V'_{\supset \sigma_k'} \cup \boldsymbol V_{\supsetneq \sigma_k} \cup \bigcup_{\sigma_j' \supset \sigma_k'}\{V'_{\sigma_j'} \hookrightarrow \partial V_{\sigma_j}\},
				\]
				see \cref{dfn:V_supset} and \cite[Section 2.2]{asplund2021simplicial} for the notation.
			\end{enumerate}
		\end{dfn}
		\begin{dfn}[Simplicial decomposition of a Weinstein pair]\label{dfn:simplicial_decomposition_pair}
			A simplicial decomposition of the Weinstein pair $(X,P)$ which is denoted by $((C,C'),(\boldsymbol V, \boldsymbol V'))$ consists of
			\begin{itemize}
				\item A simplicial decomposition $(C, \boldsymbol V)$ of $X$.
				\item A simplicial decomposition $(C', \boldsymbol V')$ of $P$.
				\item A hypersurface inclusion of simplicial decompositions $(C', \boldsymbol V') \hookrightarrow (C, \boldsymbol V)$.
			\end{itemize}
			such that 
			\[
				X \cong \# \boldsymbol V, \quad P \cong \# \boldsymbol V'.
			\]
		\end{dfn}
		We now describe simplicial decompositions of Weinstein pairs in simple examples. These also serve as examples of simplicial decompositions of Weinstein manifolds by setting $C' = \varnothing$ in each of the following examples.
		\begin{description}
			\item[$C = \varDelta^1$] A simplicial decomposition of $X$ over the $1$-simplex is a tuple $(\varDelta^1, \boldsymbol V)$ where $\boldsymbol V := \left\{V^{2n-2},X^{2n}_1,X^{2n}_2, \iota_1 \colon V \hookrightarrow \partial X_1, \iota_2 \colon V \hookrightarrow \partial X_2\right\}$. Here $V$ is a Weinstein $(2n-2)$-manifold corresponding to the single edge of $\varDelta^1$, $X_1^{2n}$ and $X_2^{2n}$ are two Weinstein $2n$-manifolds corresponding to each vertex of $\varDelta^1$ and $\iota_1, \iota_2$ are two Weinstein hypersurfaces corresponding to the face inclusion of each vertex in the single edge of the $1$-simplex. 
			
			Using the tuple $(\varDelta^1, \boldsymbol V)$ we construct a Weinstein pair through the following surgery presentation. The basic building block associated to $V$ is the Weinstein cobordism $(V \times D_\varepsilon T^\ast \varDelta^1, \lambda_V + 2xdy+ydx)$ where $D_\varepsilon T^\ast \varDelta^1$ is the $\varepsilon$-disk cotangent bundle of $\varDelta^1$, $y$ is a coordinate in the $\varDelta^1$-factor and $x$ is a coordinate in the fiber direction. The negative end of this cobordism is $V \times D_\varepsilon T^\ast(\eval[0]{\varDelta^1}_{\partial \varDelta^1}) = (V \times (-\varepsilon,\varepsilon)) \sqcup (V \times (-\varepsilon,\varepsilon))$. Using the Weinstein hypersurfaces $\iota_1,\iota_2 \in \boldsymbol V$ we attach the Weinstein cobordism $V \times D_\varepsilon T^\ast \varDelta^1$ to $X_1 \sqcup X_2$ along $(V \times (-\varepsilon,\varepsilon))\sqcup (V \times (-\varepsilon,\varepsilon))$ and denote the resulting Weinstein manifold by $\# \boldsymbol V$, or $X_1 \#_V X_2$ using more common notation, as this is nothing but the Weinstein connected sum of $X_1$ and $X_2$ over $V$ \cite{avdek2021liouville,alvarez2020positive,eliashberg2018weinstein}.

			We require $C'$ to be a simplicial subcomplex of $C$ and consider the two cases $C' = \varDelta^0, \varDelta^1$.
			\begin{description}
				\item[$C' = \varDelta^0$]
					In this case we set $\boldsymbol V' = \left\{P^{2n-2}\right\}$. Via \cref{dfn:hypersurface_inclusion} we now describe a hypersurface inclusion $(\varDelta^0, \boldsymbol V') \hookrightarrow (\varDelta^1, \boldsymbol V)$. It consists of an inclusion $\varDelta^0 \hookrightarrow \varDelta^1$ as a simplicial subcomplex. We include $\varDelta^0$ into the first vertex, which corresponds to $X_1$. Such a hypersurface inclusion additionally consists of a Weinstein hypersurface $P \sqcup V \hookrightarrow \partial X_1$, because we have $\boldsymbol V_{\supsetneq \sigma_0} = \{V\}$, see \cref{dfn:hypersurface_inclusion}(2).

					Such a hypersurface inclusion $(\varDelta^0, \boldsymbol V') \hookrightarrow (\varDelta^1, \boldsymbol V)$ now allows us to construct the Weinstein pair $(X_1\#_V X_2, P)$, by simply taking the Weinstein connected sum of $X_1$ and $X_2$ along $V$. The Weinstein hypersurface $P$ being disjoint from $V$ in $\partial X_1$ ensures that $P$ is a Weinstein hypersurface in $X_1 \#_V X_2$, see \cref{fig:surgery_pres_pair}.
					\begin{figure}[!htb]
						\centering
						\includegraphics[scale=0.75]{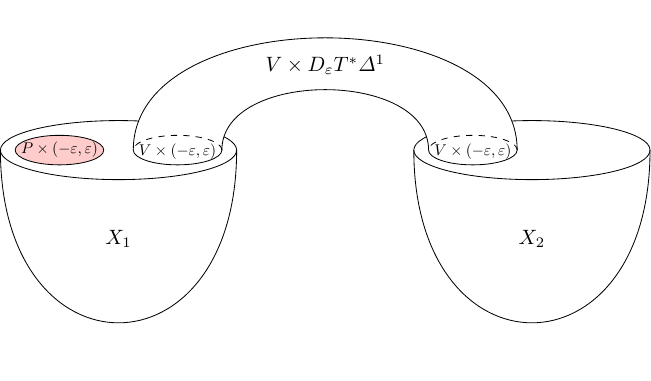}
						\hspace{2cm}
						\raisebox{10ex}{\includegraphics{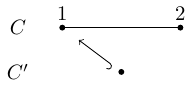}}
						\caption{Left: The surgery presentation of the Weinstein pair $(X_1\#_V X_2, P)$. Right: The simplicial subcomplex $C' \hookrightarrow C$.}\label{fig:surgery_pres_pair}
					\end{figure}
				\item[$C' = \varDelta^1$]
					In this case $\boldsymbol V' = \left\{Q^{2n-4},P_1^{2n-2}, P_2^{2n-2}, \iota_{Q,1} \colon Q \hookrightarrow \partial P_1^{2n-2}, \iota_{Q,2} \colon Q \hookrightarrow \partial P_2^{2n-2}\right\}$ where $Q$ is a Weinstein $(2n-4)$-manifold, $P_1$ and $P_2$ are Weinstein $(2n-2)$-manifolds and $\iota_{Q,1}$ and $\iota_{Q,2}$ are Weinstein hypersurfaces. This gives the surgery presentation $P = P_1 \#_Q P_2$ as described above.

					Pick the hypersurface inclusion $(\varDelta^1, \boldsymbol V') \hookrightarrow (\varDelta^1, \boldsymbol V)$ such that $\varDelta^1 \hookrightarrow \varDelta^1$ is the identity map. By the definition \cref{dfn:hypersurface_inclusion} the hypersurface inclusion additionally consists of the following.
					\begin{enumerate}
						\item A Weinstein hypersurface $Q^{2n-4} \hookrightarrow \partial V^{2n-2}$.
						\item Two Weinstein hypersurfaces $P_i \#_Q V \hookrightarrow \partial X_i$ for $i\in \left\{1,2\right\}$.
					\end{enumerate}
					The Weinstein hypersurface $Q \hookrightarrow \partial V$ extends to a Weinstein hypersurface $Q \times D_\varepsilon T^\ast \varDelta^1 \hookrightarrow \partial_\infty(V \times D_\varepsilon T^\ast \varDelta^1)$ which is glued together with each of the negative ends of $Q \times D_\varepsilon T^\ast \varDelta^1 \hookrightarrow \partial X_i$ along $\partial V$ for $i\in \left\{1,2\right\}$. The result is a Weinstein hypersurface $P_1 \#_Q P_2 \hookrightarrow \partial(X_1 \#_V X_2)$, see \cref{fig:surgery_pres_pair_2}.
					\begin{figure}[!htb]
						\centering
						\includegraphics[scale=0.75]{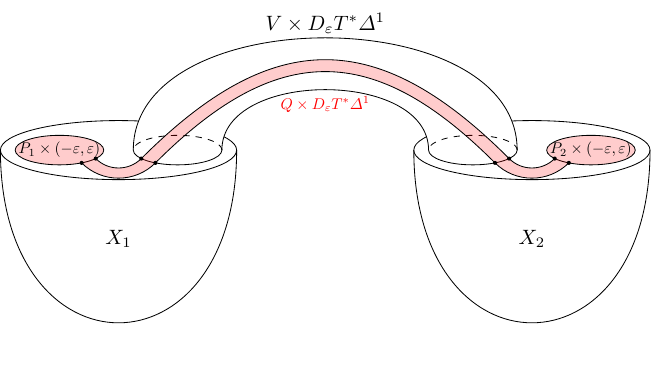}
						\hspace{2cm}
						\raisebox{10ex}{\includegraphics{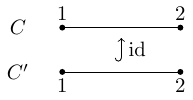}}
						\caption{Left: The surgery presentation of the Weinstein pair $(X_1\#_V X_2, \color{red}{P_1\#_Q P_2}\color{black}{)}$. Right: The simplicial subcomplex $C' \hookrightarrow C$.}\label{fig:surgery_pres_pair_2}
					\end{figure}
			\end{description}
			\item[$C = \varDelta^2$]
				We consider a simplicial decomposition $(\varDelta^2, \boldsymbol V)$ of $X$ as follows. Let
				\[
					\boldsymbol V := \{V^{2n-2}_1,V^{2n-2}_2,V^{2n-2}_3,W^{2n-4},X^{2n}_1,X^{2n}_2,X^{2n}_3, \iota_{12}, \iota_{23}, \iota_{13}, \iota_1, \iota_2, \iota_3\},
				\]
				where 
				\begin{itemize}
					\item $X_1$, $X_2$ and $X_3$ are Weinstein $2n$-manifolds corresponding to the vertices of $\varDelta^2$.
					\item $V_1$, $V_2$ and $V_3$ are Weinstein $(2n-2)$-manifolds corresponding to the edges of $\varDelta^2$.
					\item $W$ is a Weinstein $(2n-4)$-manifold corresponding to the $2$-face of $\varDelta^2$.
					\item $\iota_i \colon W \hookrightarrow \partial V_i$ is a Weinstein hypersurface for $i\in \left\{1,2,3\right\}$ corresponding to the inclusions of the edges into the $2$-simplex.
					\item $\iota_{12} \colon V_1 \#_W V_2 \hookrightarrow \partial X_3$, $\iota_{23} \colon V_2 \#_W V_3 \hookrightarrow \partial X_1$ and $\iota_{13} \colon V_3 \#_W V_1 \hookrightarrow \partial X_2$ are Weinstein hypersurfaces corresponding to the inclusion of the vertices into the $2$-simplex.
				\end{itemize}
				We describe a simplicial decomposition of the Weinstein pair $(X,P)$ in the two cases $C' = \varDelta^1$ and $C' = \varDelta^2$ below.
				\begin{description}
					\item[$C' = \varDelta^1$] As above we have a simplicial decomposition over the $1$-simplex $(\varDelta^1, \boldsymbol V')$ of $P$ so that $P \cong P_1\#_Q P_2$. We choose the hypersurface inclusion $(\varDelta^1, \boldsymbol V') \hookrightarrow (\varDelta^2, \boldsymbol V)$ that is given by the following.
					\begin{enumerate}
						\item An inclusion $\varDelta^1$ as a simplicial subcomplex of $\varDelta^2$ as the edge corresponding to $V_1$.
						\item A Weinstein hypersurface $Q^{2n-4} \hookrightarrow \partial V_1^{2n-2}$.
						\item Two Weinstein hypersurfaces $P_1 \#_Q V_1 \hookrightarrow \partial X_2$ and $P_2 \#_Q V_1 \hookrightarrow \partial X_3$.
					\end{enumerate}
					As before the Weinstein hypersurface $Q \hookrightarrow \partial V$ extends to a Weinstein hypersurface $Q \times D_\varepsilon T^\ast \varDelta^1 \hookrightarrow \partial_\infty(V \times D_\varepsilon T^\ast \varDelta^1)$ which results in a Weinstein hypersurface $P_1 \#_Q P_2 \hookrightarrow \partial X$, see \cref{fig:surgery_pres_pair_3}
					\begin{figure}[!htb]
						\centering
						\includegraphics[scale=0.75]{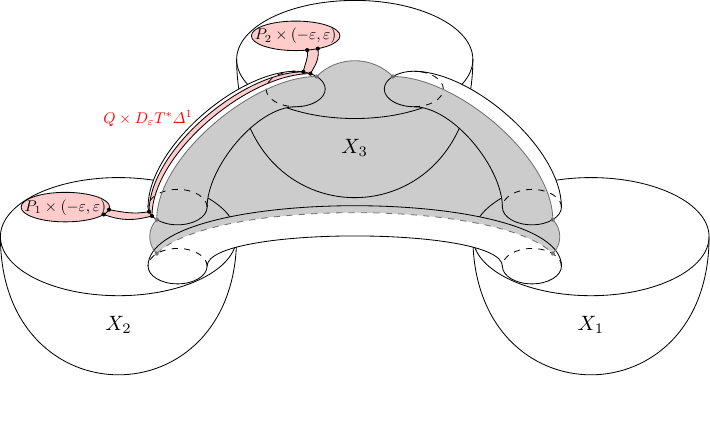}
						\hspace{2cm}
						\raisebox{10ex}{\includegraphics{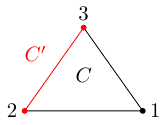}}
						\caption{Left: The surgery presentation of the Weinstein pair $(\# \boldsymbol V, P_1\#_Q P_2)$. Right: The simplicial subcomplex ${\color{red}{C'}} \hookrightarrow C$ included as the edge in $C = \varDelta^2$ connecting the vertices $2$ and $3$.}\label{fig:surgery_pres_pair_3}
					\end{figure}
					\item[$C' = \varDelta^2$] Pick a simplicial decomposition $(\varDelta^2, \boldsymbol V')$ of $P$ as follows. Let
					\[
						\boldsymbol V' := \left\{Q_1^{2n-4}, Q_2^{2n-4}, Q_3^{2n-4}, R^{2n-6}, P^{2n-2}_1,P^{2n-2}_2,P^{2n-2}_3, j_{12}, j_{23}, j_{13}, j_1, j_2, j_3\right\},
					\]
					where
					\begin{itemize}
						\item $P_1$, $P_2$ and $P_3$ are Weinstein $(2n-2)$-manifolds corresponding to the vertices of $\varDelta^2$.
						\item $Q_1$, $Q_2$ and $Q_3$ are Weinstein $(2n-4)$-manifolds corresponding to the edges of $\varDelta^2$.
						\item $R$ is a Weinstein $(2n-6)$-manifold corresponding to the $2$-face of $\varDelta^2$.
						\item $j_i \colon R \hookrightarrow \partial Q_i$ is a Weinstein hypersurface for $i\in \left\{1,2,3\right\}$ corresponding to the inclusions of the edges into the $2$-simplex.
						\item $j_{12} \colon Q_1 \#_R Q_2 \hookrightarrow \partial P_3$, $j_{23} \colon Q_2 \#_R Q_3 \hookrightarrow \partial P_1$ and $j_{13} \colon Q_3 \#_R Q_1 \hookrightarrow \partial P_2$ are Weinstein hypersurfaces corresponding to the inclusion of the vertices into the $2$-simplex.
					\end{itemize}
					We choose the hypersurface inclusion $(\varDelta^2, \boldsymbol V') \hookrightarrow (\varDelta^2, \boldsymbol V)$ that is given by the following.
					\begin{enumerate}
						\item The identity map $\varDelta^2 \hookrightarrow \varDelta^2$.
						\item Weinstein hypersurfaces $Q_i \hookrightarrow \partial V_i$ for $i\in \left\{1,2,3\right\}$ whose images are disjoint from the images of $\iota_i \in \boldsymbol V$.
						\item A Weinstein hypersurface $\iota_R \colon R \hookrightarrow \partial W$.
						\item Weinstein hypersurfaces $\iota_{i,R} \colon Q_i \#_R W \hookrightarrow \partial V_i$ for $i\in \left\{1,2,3\right\}$.
						\item Weinstein hypersurfaces $\# \widetilde{\boldsymbol V}_i \hookrightarrow \partial X_i$ for $i\in \left\{1,2,3\right\}$ where
						\begin{align*}
							\widetilde{\boldsymbol V}_i &:= \left\{P_i,R,W,\iota_R\right\} \\
							&\qquad \cup \left\{Q_k,V_k,j_k, \iota_{k,R}\right\}_{k \in \left\{1,2,3\right\} \setminus \left\{i\right\}} \cup \left\{j_{k \ell}\right\}_{\left\{k,\ell\right\} = \left\{1,2,3\right\} \setminus \left\{i\right\}}
						\end{align*}
						see \cref{fig:simplicial_decomp_pair_22_vertex}.
					\end{enumerate}
					\begin{figure}[!htb]
						\centering
						\includegraphics[scale=0.8]{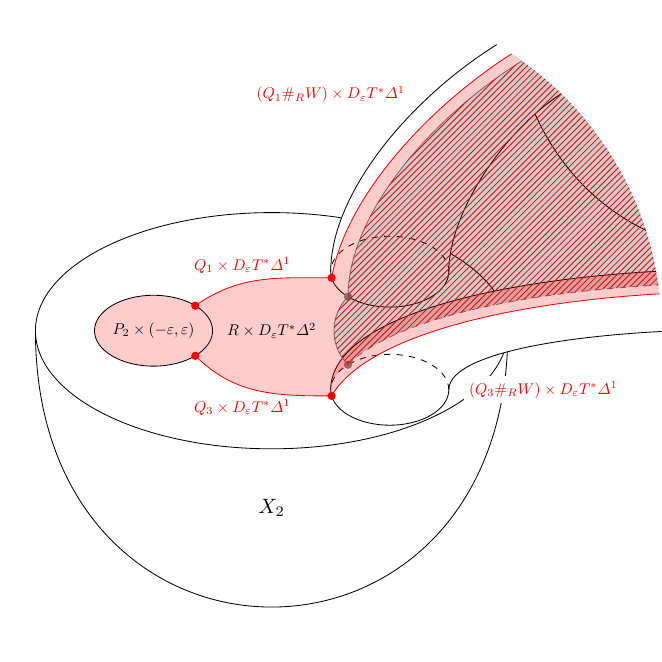}
						\caption{Part of the Weinstein hypersurface $P = \# \boldsymbol V'$ lying in $\partial X_2$.}\label{fig:simplicial_decomp_pair_22_vertex}
					\end{figure}
					Each Weinstein hypersurface $\iota_{k,R}$ extends over the $1$-simplex handles $V_k \times D_\varepsilon T^\ast \varDelta^1$ to a Weinstein hypersurface 
					\[
						(Q_k \#_R W) \times D_\varepsilon T^\ast \varDelta^1 \hookrightarrow \partial_\infty(V_k \times  D_\varepsilon T^\ast \varDelta^1),
					\]
					and the Weinstein hypersurface $\iota_R \colon R \hookrightarrow \partial W$ extends to a Weinstein hypersurface $R \times D_\varepsilon T^\ast \varDelta^2 \hookrightarrow \partial_\infty(W \times D_\varepsilon T^\ast \varDelta^2)$. These Weinstein hypersurfaces all glue together the same way as $2$-simplex handles are defined, see \cite[Section 2.2]{asplund2021simplicial}. The result is a Weinstein hypersurface $\# \boldsymbol V' \hookrightarrow \partial \left(\# \boldsymbol V\right)$, see \cref{fig:simplicial_decomp_pair_22}.
					\begin{figure}[!htb]
						\centering
						\includegraphics[scale=0.8]{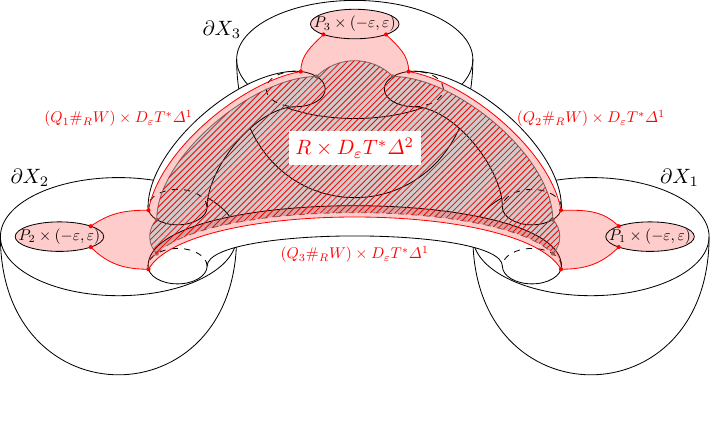}
						\caption{The Weinstein hypersurface $\# \boldsymbol V' \hookrightarrow \partial \# \boldsymbol V$ depicted in red.}\label{fig:simplicial_decomp_pair_22}
					\end{figure}
				\end{description}
		\end{description}
		\begin{rmk}
			From the point of view of Weinstein sectors, part of the the surgery description presented above corresponds to partial gluing of two Weinstein sectors along a shared subsector in the boundary, which was already described in \cite[Construction 12.18]{ganatra2022sectorial}.
		\end{rmk}
			\subsubsection{Good sectorial covers for sectors}
				The notion of a good sectorial cover of a Weinstein manifold introduced in \cite{asplund2021simplicial} now has a natural extension to Weinstein sectors. First recall the definition of a sectorial cover of a Liouville sector.
				\begin{dfn}[Sectorial cover {\cite[Definition 12.2 and Definition 12.19]{ganatra2022sectorial}}]
					Let $X$ be a Liouville sector. Suppose $X = X_1 \cup \cdots \cup X_n$, where each $X_i$ is a manifold-with-corners with precisely two faces $\partial^1 X_i := X_i \cap \partial X$ and the point set topological boundary $\partial^2 X_i$ of $X_i \subset X$, meeting along the corner locus $\partial X \cap \partial^2 X_i = \partial^1 X_i \cap \partial^2 X_i$. Such a covering $X = X_1 \cup \cdots \cup X_n$ is called \emph{sectorial} if and only if $\forall i \in \left\{1,\ldots, n\right\}$ there are functions $I_i \colon N^Z(\partial^2 X_i) \longrightarrow \R$ (where $N^Z$ denotes a neighborhood which is cylindrical with respect to the Liouville vector field $Z$) which is linear at infinity such that:
					\begin{itemize}
						\item $X_{I_i}$ is outward pointing along $\partial^2 X_i$.
						\item $X_{I_i}$ is tangent to $\partial^2 X_j$ along $\partial^2 X_i \cap \partial^2 X_j$ for $i\neq j$.
						\item $[X_{I_i},X_{I_j}] = 0$ along $N^Z(\partial^2 X_i) \cap N^Z(\partial^2 X_j)$.
					\end{itemize}
				\end{dfn}
				\begin{lma}[{\cite[Lemma 12.11]{ganatra2022sectorial}}]
					Let $X$ be a Liouville sector and suppose $X = X_1 \cup \cdots \cup X_n$ is a sectorial cover. For any $\varnothing \neq A \subset \left\{1,\ldots,n\right\}$ we have a Liouville isomorphism
					\begin{equation}\label{eq:coords_nghd_bdry}
						N^Z \left(\bigcap_{i\in A} \partial^2 X_i\right) \cong (X_A^{(k)} \times T^\ast \R^k, \lambda_{X_A^{(k)}} + \lambda_{T^\ast \R^k} + df),
					\end{equation}
					where $k := \abs A - 1$,  $X^{(k)}_{A}$ is a $(2n-2k)$-dimensional Weinstein manifold and $f$ is a real-valued function on $X^{(k)}_A \times T^\ast \R^k$ with support in $K \times T^\ast \R^k$ for some compact $K \subset X^{(k)}_{A}$.
				\end{lma}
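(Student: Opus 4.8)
The plan is to prove this as a Liouville normal-form (``splitting'') statement near the corner stratum $C_A := \bigcap_{i \in A} \partial^2 X_i$, using as the only input the $\abs{A}$ commuting Hamiltonian flows supplied by the sectorial axioms. First I would shrink to a neighborhood $N$ of $C_A$ that is cylindrical for the Liouville vector field $Z$ and on which all of the functions $I_i$, $i \in A$, are defined and satisfy $[X_{I_i}, X_{I_j}] = 0$; after subtracting constants one may also assume $I_i|_{C_A} = 0$. Along $C_A$ the axioms force the $X_{I_i}$, $i \in A$, to be pointwise linearly independent and transverse to $C_A$: since $X_{I_i}$ is tangent to $\partial^2 X_j$ for every $j \neq i$ and outward along $\partial^2 X_i$, in the $\abs{A}$-dimensional bundle $\bigoplus_{j \in A} N_{\partial^2 X_j}|_{C_A}$ the images of the $X_{I_i}$ sit diagonally, which is independent and in turn forces $C_A$ to have codimension exactly $\abs{A}$. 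Hence their joint flow is defined and, on a possibly smaller cylindrical $N$, integrates to a free action of a neighborhood of $0$ in $\R^{\abs{A}}$, exhibiting $N$ as a trivial $\R^{\abs{A}}$-bundle over $C_A$ with fiber coordinates given by the flow times.

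I would then run an induction on $\abs{A}$. For $\abs{A} = 1$ the claim reduces to the fact that a $Z$-cylindrical neighborhood of an interior cutting hypersurface of a Weinstein manifold is itself (the completion of) a $2n$-dimensional Weinstein manifold, which is the local model of a Liouville sector along a cut. For the inductive step write $A = A' \sqcup \{i_0\}$, so $C_A = C_{A'} \cap \partial^2 X_{i_0}$; by the induction hypothesis $N^Z(C_{A'}) \cong X_{A'}^{(\abs{A} - 2)} \times T^\ast \R^{\abs{A} - 2}$, compatibly with the Liouville form up to a compactly supported exact correction. Since $X_{I_{i_0}}$ commutes with all the flows already used and $0$ is a regular value of $I_{i_0}|_N$, cutting along $\partial^2 X_{i_0}$ and flowing by $X_{I_{i_0}}$ peels off one further standard symplectic $\R^2$-factor and produces $N^Z(C_A) \cong X_A^{(\abs{A} - 1)} \times T^\ast \R^{\abs{A} - 1}$, where $X_A^{(\abs{A} - 1)}$ is the symplectic reduction of $X_{A'}^{(\abs{A} - 2)}$ along the trace of $\partial^2 X_{i_0}$; its dimension drops by two to $2n - 2(\abs{A} - 1)$, and one checks that $Z$ is tangent to it by cylindricity and remains a gradient-like Liouville vector field for a Morse function, so $X_A^{(\abs{A} - 1)}$ is again Weinstein. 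To finish, I would upgrade the symplectomorphism to a Liouville isomorphism by a Moser argument: the pulled-back form $\lambda$ and the product form $\lambda_{X_A^{(k)}} + \lambda_{T^\ast \R^k}$ with $k := \abs{A} - 1$ are primitives of the same symplectic form, hence differ by a closed one-form, and conicality at infinity (linearity at infinity of the $I_i$ together with $Z$-cylindricity of $N$) lets one write the difference as $df$ with $\operatorname{supp} f \subset K \times T^\ast \R^k$ for a compact $K \subset X_A^{(k)}$, since outside such a $K$ the two Liouville forms are already pinned down by the linear-at-infinity data of the $I_i$.

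The hard part is the analysis at infinity in the last step, not the symplectic linear algebra: one must check that the flows are complete on a genuinely $Z$-cylindrical neighborhood, that each reduced space $X_A^{(k)}$ is Weinstein with a conical end rather than merely symplectic, and that the primitive $f$ can be taken with honest compact support in the $X_A^{(k)}$-direction, which is precisely where the full strength of the sectorial axioms is used. A secondary point of care is the bookkeeping in the inductive step --- that cutting along $\partial^2 X_{i_0}$ and then reducing is compatible, up to Liouville isomorphism, with the splitting already obtained for $A'$ --- which is best handled by choosing the flow coordinates for all of $A$ simultaneously from the outset rather than one index at a time, and by invoking the partial-gluing description of Weinstein sectors from \cite[Construction 12.18]{ganatra2022sectorial} to organize the identifications.
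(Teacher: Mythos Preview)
The paper does not prove this lemma at all: it is stated with the citation \cite[Lemma 12.11]{ganatra2022sectorial} and no proof is given, because the result is quoted verbatim from Ganatra--Pardon--Shende. So there is no ``paper's own proof'' to compare your proposal against; the argument you sketch is essentially a reconstruction of the proof in \cite{ganatra2022sectorial}, and any assessment of its correctness should be made against that source rather than the present paper.
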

				\begin{dfn}[Good sectorial cover]\label{dfn:good_sectorial_cover}
					Let $X$ be a Weinstein sector, and suppose $X=X_1 \cup \cdots \cup X_m$ is a sectorial cover. We say that the sectorial cover is \emph{good} if for every $\varnothing \neq	A \subset \{1,\ldots,m\}$ we have a Weinstein isomorphism
					\begin{equation}\label{eq:coordinates_intersections}
						N^Z\left(\bigcap_{i\in A} X_i\right) \cong (X^{(k)}_{A} \times T^\ast \R^k,\lambda_{X^{(k)}_{A}} + \lambda_{T^\ast \R^k} + df),
					\end{equation}
					extending the Weinstein isomorphism \eqref{eq:coords_nghd_bdry}, where $k := \abs A - 1$,  $X^{(k)}_{A}$ is a $(2n-2k)$-dimensional Weinstein sector and $f$ is a real-valued function on $X^{(k)}_A \times T^\ast \R^k$ with support in $K \times T^\ast \R^k$ for some compact $K \subset X^{(k)}_{A}$.
				\end{dfn}
				\begin{dfn}[Simplicial decomposition of a Weinstein sector]
					A simplicial decomposition of a  Weinstein sector $X$ is defined to be a simplicial decomposition of the Weinstein pair $(\widetilde X, P)$ which corresponds to $X$ via convex completion.
				\end{dfn}
				\begin{thm}
					Let $X$ be a Weinstein sector. There is a one-to-one correspondence (up to Weinstein homotopy) between good sectorial covers of $X$ and simplicial decompositions of $X$.
				\end{thm}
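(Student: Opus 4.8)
The plan is to deduce this correspondence from the analogous one-to-one correspondence between good sectorial covers and simplicial decompositions of Weinstein \emph{manifolds} established in \cite{asplund2021simplicial}, applied simultaneously to the convex completion $\widetilde X$ of the sector and to the Weinstein hypersurface $P \subset \partial \widetilde X$, and then to check that the extra data of a hypersurface inclusion of simplicial decompositions (\cref{dfn:hypersurface_inclusion,dfn:simplicial_decomposition_pair}) is precisely what records how a sectorial cover of $X$ restricts to the face $\partial X$.

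For the forward direction, start with a good sectorial cover $X = X_1 \cup \cdots \cup X_m$. Convex-completing each sub-sector-with-corners $X_i$ gives a cover of $\widetilde X$ by Weinstein submanifolds-with-corners, and the restriction $\{\partial^1 X_i = X_i \cap \partial X\}$ convex-completes to a cover $\{P_i\}$ of $P$. The sectorial compatibility conditions — tangency of each Liouville field $X_{I_i}$ to $\partial X$ and the identification of the corner locus of $X_i$ with $\partial X \cap \partial^2 X_i$ — guarantee that both covers are good sectorial covers of the Weinstein manifolds $\widetilde X$ and $P$, and that the neighborhood models \eqref{eq:coordinates_intersections} for the sector restrict to the corresponding models for $P$, with the convex-completion hypersurface of the sector $X^{(k)}_A$ providing the corresponding data for $P$. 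Applying the manifold version of the theorem to each cover produces simplicial decompositions $(C,\boldsymbol V)$ of $\widetilde X$ and $(C',\boldsymbol V')$ of $P$; since the cover of $P$ is the restriction of the cover of $\widetilde X$, $C'$ is a simplicial subcomplex of $C$. The remaining data of \cref{dfn:hypersurface_inclusion} — the Weinstein hypersurface $\#(\boldsymbol V'_{\supset \sigma_k'} \sqcup_i \boldsymbol V_{\supsetneq \sigma_k}) \hookrightarrow \partial V_{\sigma_k}$ for each face $\sigma_k' \in C'$ — is then read off from the convex completion of the neighborhood coordinates around $\bigcap_{i \in A}X_i$, where $A$ is the vertex set of $\sigma_k$: the convex-completion boundary of $X^{(k)}_A$ is identified with the iterated Weinstein connected sum of those pieces of $\boldsymbol V'$ and $\boldsymbol V$ indexed by faces strictly containing $\sigma_k'$, respectively $\sigma_k$. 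This assembles to a simplicial decomposition of $(\widetilde X, P)$, hence of $X$.

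For the reverse direction, apply the thickening construction of \cite{asplund2021simplicial} to $(C,\boldsymbol V)$ to obtain a good sectorial cover of $\widetilde X$ indexed by the vertices of $C$, and then use the hypersurface-inclusion data to perform the thickening inside each basic building block $V_{\sigma_k} \times T^\ast \varDelta^k$ so that it is compatible with the embedded Weinstein hypersurface $\#(\boldsymbol V'_{\supset \sigma_k'} \sqcup_i \boldsymbol V_{\supsetneq \sigma_k})$. This turns the pieces of the cover into sub-sectors-with-corners, so that it becomes a sectorial cover of $X$ and restricts on $P$ to the thickening of $(C',\boldsymbol V')$; goodness in the sense that \eqref{eq:coordinates_intersections} holds with $X^{(k)}_A$ a Weinstein sector follows from the manifold statement for $\widetilde X$ together with the fact that the embedded hypersurface descends through each connected-sum gluing. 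That the two constructions are mutually inverse up to Weinstein homotopy reduces to the same statement for $\widetilde X$ and for $P$ from \cite{asplund2021simplicial}, together with the rigidity of the embedded Weinstein hypersurface (unique up to contact, hence Weinstein, isotopy).

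The step I expect to be the main obstacle is the identification, in the forward direction, of the convex-completion boundary of the sector $X^{(k)}_A$ appearing in \eqref{eq:coordinates_intersections} with the iterated Weinstein connected sum $\#(\boldsymbol V'_{\supset \sigma_k'} \sqcup_i \boldsymbol V_{\supsetneq \sigma_k})$ of \cref{dfn:hypersurface_inclusion}, compatibly across all faces and all the gluing maps — equivalently, checking that the $T^\ast \varDelta^k$-factors of the basic building blocks interact with the hypersurface-inclusion gluing maps exactly as in the low-dimensional models $C = \varDelta^1$ and $C = \varDelta^2$ spelled out earlier in this section. This is a bookkeeping argument over the poset of faces of $C$ with no new geometric input beyond \cite{asplund2021simplicial}, but it is where the combinatorics of \cref{dfn:hypersurface_inclusion,dfn:simplicial_decomposition_pair} must be matched term by term to the geometry of the sectorial cover.
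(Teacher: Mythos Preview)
Your proposal is correct and follows essentially the same approach as the paper: both reduce to \cite[Theorem 3.11]{asplund2021simplicial} applied to the convex completion $\widetilde X$, and both recover the simplicial decomposition $(C',\boldsymbol V')$ of $P$ together with the hypersurface-inclusion data by restricting the cover to the boundary and reading off $P_A' \hookrightarrow \partial X_A^{(k)'}$ via convex completion of the sectors $X_A^{(k)}$. The paper's proof is terser in exactly the place you flag as the main obstacle --- it simply asserts that $P_A'$ is ``precisely of the form $\#(\boldsymbol V'_{\supset \sigma_k'} \sqcup_i \boldsymbol V_{\supsetneq \sigma_k})$'' --- so your identification of that bookkeeping step as the crux is accurate.
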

				\begin{proof}
					This is similar and a slight generalization of \cite[Theorem 3.11]{asplund2021simplicial}.

					Given a simplicial decomposition $((C,C'),(\boldsymbol V, \boldsymbol V'))$ of the Weinstein pair $(\widetilde X, P)$, it follows from \cite[Theorem 3.11]{asplund2021simplicial} that we obtain a good sectorial cover $\widetilde X = \widetilde X_1 \cup \cdots \cup \widetilde X_m$ of $\widetilde X$. By definition we have that for any $\varnothing \neq A \subset \left\{1,\ldots,m\right\}$
					\[
						N^Z \left(\bigcap_{i\in A} \widetilde X_i\right) \cong X_A^{(k)} \times T^\ast \R^k,
					\]
					where each $X_A^{(k)}$ is a Weinstein $(2n-2k)$-manifold. Letting $P_A := \eval[0]{P}_A$ with the restricted Weinstein structure gives a Weinstein manifold $P_A \hookrightarrow \partial X_A^{(k)} \times T^\ast \R^k$ which comes from a Weinstein hypersurface $P_A' \hookrightarrow \partial X_A^{(k)}$. Each pair $(X_A^{(k)},P_A')$ corresponds to a Weinstein $(2n-2k)$-sector $X_A^{(k)'}$ via convex completion \cite[Section 2.7]{ganatra2020covariantly}, and thus $\widetilde X_1 \cup \cdots \cup \widetilde X_m$ is a good sectorial cover of $(\widetilde X, P)$, which by definition is a good sectorial cover for $X$.

					To finish the proof we need to reconstruct the simplicial decomposition $((C,C'),(\boldsymbol V, \boldsymbol V'))$ of $(\widetilde X,P)$ using the good sectorial cover $X_1 \cup \cdots \cup X_m$ of $X$. For any $\varnothing \neq A \subset \left\{1,\ldots,m\right\}$ we have
					\[
						N^Z \left(\bigcap_{i\in A} X_i\right) \cong X_A^{(k)} \times T^\ast \R^k,
					\]
					where each $X_A^{(k)}$ is a Weinstein $(2n-2k)$-sector which via convex completion \cite[Section 2.7]{ganatra2020covariantly} corresponds to the Weinstein pair $(X_A^{(k)'},P_A')$. The simplicial complex $C$ is given by the \v{C}ech nerve of the cover $X_1 \cup \cdots \cup X_m$ of $X$. The simplicial subcomplex $i \colon C' \hookrightarrow C$ is given by the \v{C}ech nerve of the restriction of the cover $X_1 \cup \cdots \cup X_m$ to $\partial X$. The Weinstein hypersurface $P_A'$ is precisely of the form $\#(\boldsymbol V'_{\supset \sigma_k'} \sqcup_i \boldsymbol V_{\supsetneq \sigma_k}) \hookrightarrow \partial X_A^{(k)'}$ where $\sigma_k$ is the face corresponding to $A \subset \left\{1,\ldots,m\right\}$. Forgetting the Weinstein hypersurface $P$ makes $X_1 \cup \cdots \cup X_m$ a good sectorial cover of $\widetilde X$ which via \cite[Theorem 3.11]{asplund2021simplicial} corresponds to a simplicial decomposition $(C, \boldsymbol V)$. The set $\boldsymbol V'_{\supset \sigma_k'}$ consists of the Weinstein manifolds which are restrictions of $P_A'$ to further intersections, with the corresponding Weinstein hypersurfaces which also is equal to a restriction of $P \hookrightarrow \partial \widetilde X$. Taking the union over all $\varnothing \neq A \subset \left\{1,\ldots,m\right\}$ we obtain $(C', \boldsymbol V')$ which by construction is a simplicial decomposition of $P$, and by construction we have a hypersurface inclusion $(C', \boldsymbol V') \hookrightarrow (C,\boldsymbol V)$.
				\end{proof}
	\subsection{Weinstein hypersurfaces and simplicial decompositions}\label{sec:ce_dga_for_hypersurface_wrt_simp_decomp}
		We now review the definition of the Chekanov--Eliashberg dg-algebra of a Weinstein hypersurface with a chosen handle decomposition from \cite{asplund2020chekanov} and review its relationship to Chekanov--Eliashberg dg-algebras with loop space coefficients. We describe the Chekanov--Eliashberg dg-algebra of a Weinstein hypersurface with respect to a simplicial decomposition of the corresponding Weinstein pair.

		Let $X$ be a Weinstein $2n$-manifold and let $V^{2n-2} \hookrightarrow \partial X^{2n}$ be a Weinstein hypersurface. Let $X_0$ and $V_0$ denote the subcritical part of $X$ and $V$, respectively, see \cref{rmk:handle_decomp}. Fix a Weinstein handle decomposition $h$ of $V$. Now let $X_V$ be the Weinstein cobordism obtained by attaching the basic building block $V \times D_\varepsilon T^\ast \varDelta^1$ to $X \sqcup (\R \times (V \times \R))$, and let $\varSigma(h,\varepsilon)$ denote the union of attaching spheres of the top handles of $V \times D_\varepsilon T^\ast \varDelta^1$ in $X_V$. We will denote by $\varSigma(h) := \varSigma(h,\varepsilon_0)$ for some arbitrary but fixed $\varepsilon_0 > 0$. This is a link of Legendrian spheres in the positive end of $(X_{V_0})$.
		\begin{figure}[!htb]
			\centering
			\includegraphics{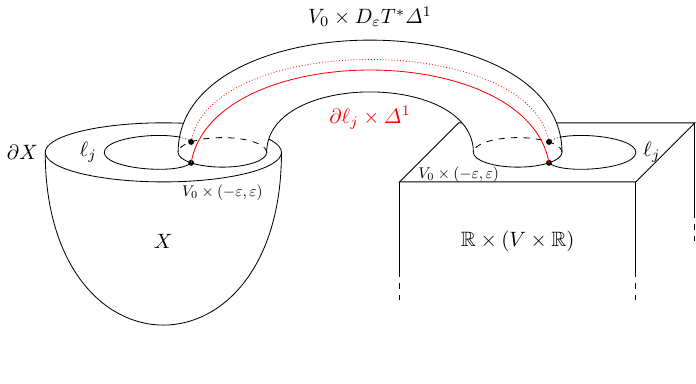}
			\caption{The Weinstein cobordism $X_V$ which we identify as the Weinstein manifold $X$ stopped at the Weinstein hypersurface $V \hookrightarrow \partial X$.}\label{fig:X_stopped_at_V}
		\end{figure}
		The union of Legendrian spheres $\varSigma(h)$ has the following geometric description.
		\begin{lma}\label{lma:top_attaaching_spheres_stopped_manifold}
			Let $\ell := \bigcup_{j} \ell_j$ denote the union of the core disks of the top handles in $h$. Then we have
			\[
				\varSigma(h,\varepsilon) = \ell \cup_{\partial \ell \times \left\{-1\right\}} (\partial \ell \times \varDelta^1) \cup_{\partial \ell \times \left\{1\right\}} \ell.
			\]
			Additionally let $\kappa := \bigcup_{i} \kappa_i$ denote the union of attaching spheres of the top handles of $X$. Then $\varSigma(h) \cup \kappa$ is the union of attaching spheres of the top Weinstein handles of $X_V$.
		\end{lma}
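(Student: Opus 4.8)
The statement is obtained by unwinding the explicit Weinstein model of the $1$-simplex handle from \cite[Section 2.1]{asplund2021simplicial} and reading off the attaching data of its top handles inside $X_V$. First I would recall that $V\times D_\varepsilon T^\ast\varDelta^1$ carries the Liouville form $\lambda_V+(2x\,dy+y\,dx)$, whose Liouville vector field is $Z_V+(2x\partial_x-y\partial_y)$, where $Z_V$ is the Liouville field of $V$. Its zeros are exactly the points $(p,0)$ with $p$ a zero of $Z_V$, and the zero $(0,0)\in T^\ast\varDelta^1$ of $2x\partial_x-y\partial_y$ has Morse index $1$ with stable manifold equal to the zero section $\varDelta^1$ (the fibre being the unique positive eigendirection). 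Hence the index of $(p,0)$ equals $\operatorname{index}(p)+1$, so the index-$n$ handles of $V\times D_\varepsilon T^\ast\varDelta^1$ are in canonical bijection with the index-$(n-1)$ handles of $V$, i.e.\ with the disks $\ell_j$.

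Next I would identify the core disks. Since the Liouville field splits as a sum, the core disk (stable manifold) of the critical point $(p_j,0)$ is the product of the core disk $\ell_j$ of $p_j$ in $V$ with the stable manifold of $(0,0)$ in $T^\ast\varDelta^1$, namely $\ell_j\times\varDelta^1$, with attaching sphere
\[
	\partial(\ell_j\times\varDelta^1)=(\ell_j\times\{-1\})\cup(\partial\ell_j\times\varDelta^1)\cup(\ell_j\times\{1\}).
\]
Under the gluing that forms $X_V$, the two faces $\ell_j\times\partial\varDelta^1$ lie on the negative end $V\times\partial\varDelta^1$ of the handle, one copy of which is glued to the Weinstein hypersurface $V\subset\partial X$ and the other to $\R\times(V\times\R)$, while $\partial\ell_j\times\varDelta^1$ is the trace over the handle joining the two copies. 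Taking the union over $j$ and smoothing corners then gives exactly $\varSigma(h)=\ell\cup_{\partial\ell\times\{-1\}}(\partial\ell\times\varDelta^1)\cup_{\partial\ell\times\{1\}}\ell$, which is the first assertion. I expect this step to be the main obstacle: one must check that this union is a genuine Legendrian link in the positive contact boundary of the union of $X$, the trivial cobordism, and the subcritical part of the $1$-simplex handle, which amounts to tracking the handle decomposition carefully through the stopping construction of \cref{fig:X_stopped_at_V}.

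For the second assertion I would observe that $\R\times(V\times\R)$ is a trivial Weinstein cobordism and contributes no handles, while attaching $V\times D_\varepsilon T^\ast\varDelta^1$ contributes exactly one handle per handle of $V$ with index raised by one. Therefore a handle decomposition of $X_V$ is given by the subcritical handles of $X$, the handles coming from the subcritical handles of $V$ (which remain of index $\le n-1$), and then the index-$n$ handles, the latter being precisely the top handles of $X$, attached along $\kappa$, together with the top handles of the $1$-simplex handle, attached along $\varSigma(h)$. Hence $\varSigma(h)\cup\kappa$ is the union of attaching spheres of the top handles of $X_V$. The only point requiring care here is the legitimacy of performing all subcritical handle attachments before the index-$n$ ones, and the fact that this rearrangement leaves $\kappa$ Legendrian isotopic to its original form; both follow from standard Weinstein handle moves.
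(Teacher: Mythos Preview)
Your proposal is correct and is essentially the argument underlying the cited result: the paper's own proof is just the one-line citation ``This is part of \cite[Lemma 3.1]{asplund2020chekanov}, cf.\ \cite[Definition 2.20]{asplund2021simplicial}'', and what you have written is precisely the direct computation that this reference carries out, namely analyzing the split Liouville vector field on $V\times D_\varepsilon T^\ast\varDelta^1$, reading off the index shift by one, and identifying the core disks as $\ell_j\times\varDelta^1$. Your discussion of the handle rearrangement for the second assertion is also the standard one.
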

		\begin{proof}
			This is part of \cite[Lemma 2.21]{asplund2021simplicial}, see also \cite[Lemma 3.1]{asplund2020chekanov}. 
		\end{proof}
		\begin{lma}\label{lma:one-to-one-corr_reeb_chords}
			For all $\mathfrak a > 0$ there exists some $\delta > 0$ and an arbitrary small perturbation of the Weinstein hypersurface $V_0 \hookrightarrow \partial X$ such that for all $0 < \varepsilon < \delta$ the Reeb chords of $\varSigma(h,\varepsilon)$ of action $< \mathfrak a$ are in one-to-one grading preserving correspondence with Reeb chords of $\ell \subset \partial X_0$ of action $< \mathfrak a$, and Reeb chords of $\partial \ell \subset \partial V_0$ of action $< \mathfrak a$.
		\end{lma}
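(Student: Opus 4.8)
The plan is to run the localization argument that proves the analogous statement for Chekanov--Eliashberg dg-algebras with field coefficients, following \cite{asplund2020chekanov}. The starting point is the explicit description of $\varSigma(h)$ in \cref{lma:top_attaaching_spheres_stopped_manifold}, read off as the boundary of $\ell \times \varDelta^1$ sitting in the positive end of $X_V$: one copy $\ell \times \{-1\}$ sits as a Legendrian disk-with-boundary inside a collar of $\partial X_0$ (since $\ell$ comes from the Weinstein hypersurface $V_0 \subset \partial X_0$), the other copy $\ell \times \{1\}$ sits inside the cylinder $\R \times (V_0 \times \R)$ as the Legendrian lift of the exact Lagrangian $\ell \subset V_0$ to the contactization $V_0 \times \R$, and the two copies are joined by the bridge $\partial\ell \times \varDelta^1$ running through the $1$-simplex handle $V_0 \times D_\varepsilon T^\ast \varDelta^1$, which carries the Liouville form $\lambda_{V_0} + 2x\,dy + y\,dx$.

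First I would fix these three local models together with their Reeb dynamics. In the collar of $\partial X_0$ the contact form is unchanged, so the Reeb chords of $\varSigma(h)$ lying there of action $< \mathfrak a$ are precisely the Reeb chords of $\ell \subset \partial X_0$ of action $< \mathfrak a$. In $\R \times (V_0 \times \R)$ the Reeb vector field of the contactization is $\partial_t$, and the Legendrian lift of an embedded exact Lagrangian has no Reeb chords, so the copy $\ell \times \{1\}$ contributes nothing. On the handle, writing the induced contact form in coordinates $(v \in V_0, (y,x) \in T^\ast\varDelta^1)$, the Reeb flow is to leading order the Reeb flow of $\lambda_{V_0}$ twisted by translation in the base coordinate $y$, and $\partial\ell \times \varDelta^1$ is cut out by $\{x = 0,\ v \in \partial\ell\}$; Reeb chords confined to the handle region therefore come, after a small perturbation, one-for-one from the Reeb chords of $\partial\ell \subset \partial V_0$.

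The crux is the localization estimate: I must exhibit $\delta > 0$ and an arbitrarily small perturbation of $V_0$ so that for every $0 < \varepsilon < \delta$ each Reeb chord of $\varSigma(h)$ of action $< \mathfrak a$ lies entirely inside one of the three models above. The mechanism is that the regions interpolating between $X$, the $1$-simplex handle, and the cylinder can be arranged so that in each of them the Reeb vector field has a strictly positive component along a bounded monotone coordinate (the base coordinate $y$ of $\varDelta^1$, respectively the collar coordinate on $\partial X_0$); hence a Reeb chord that passes from one model into another accrues an amount of action bounded below independently of $\varepsilon$, so taking $\varepsilon < \delta$ small enough forces all chords of action $< \mathfrak a$ to be confined and makes the identifications of the previous paragraph bijective. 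The perturbation of $V_0$ is needed simultaneously to make $\varSigma(h)$ chord-generic, to separate Reeb chords of $\ell$ with endpoints near $\partial\ell$ from Reeb chords of $\partial\ell$ near the fold $\ell \cup_{\partial\ell} \ell$, and to prevent short chords from being created in the interpolation regions; arranging all of these at once, together with the monotonicity of the interpolating Reeb flows, is the step I expect to be the main technical obstacle, and it would be handled by a direct computation in the explicit handle model as in \cite{asplund2020chekanov,asplund2021simplicial}.

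Finally, the grading. For a chord localized near the $\partial X_0$-copy of $\ell$ the contact data agree with those near $\ell \subset \partial X_0$, and one chooses the capping path inside that copy of $\ell$; the only point to check is that the stabilization of the conormal directions of $\varSigma(h)$ over $\ell$ contributes trivially to the Conley--Zehnder index, so that the grading is unchanged. For a chord localized in the handle the capping path runs in $\partial\ell \times \varDelta^1$, and the extra $T^\ast\varDelta^1$ directions split off a canceling Lagrangian pair in the linearized return map, so the index equals that of the corresponding chord of $\partial\ell \subset \partial V_0$; this is the standard product computation for simplex handles in \cite{asplund2021simplicial}, and I would only need to match conventions and Maslov potentials.
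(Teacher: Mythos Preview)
The paper itself gives no proof here: it cites \cite[Lemma 2.3]{asplund2020chekanov} (cf.\ \cite[Lemma 2.24]{asplund2021simplicial}). Your sketch is a reasonable outline of what that cited proof contains, and the architecture matches: split $\varSigma(h)$ into its three pieces via \cref{lma:top_attaaching_spheres_stopped_manifold}, observe that the copy of $\ell$ lifted into the contactization $V_0 \times \R$ carries no chords, identify chords on the $\partial X_0$-copy with chords of $\ell \subset \partial X_0$, identify chords on the bridge with chords of $\partial \ell \subset \partial V_0$, and then argue that for thin handles nothing of bounded action can travel between regions. Your grading paragraph is also in line with the standard product computation for simplex handles.

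The localization paragraph, however, does not hold together as written. A drift argument of the form ``the Reeb vector field has a strictly positive component along a bounded monotone coordinate'' yields an \emph{upper} bound on the time a trajectory can spend in that region, not a lower bound on the action needed to cross it; and your two clauses ``accrues an amount of action bounded below independently of $\varepsilon$'' and ``taking $\varepsilon < \delta$ small enough forces confinement'' are in tension --- if the lower bound were genuinely $\varepsilon$-independent, shrinking $\varepsilon$ would buy you nothing. The actual mechanism in the cited lemma is a direct computation of the contact form and Reeb flow on the positive boundary of the handle $(V_0 \times D_\varepsilon T^\ast \varDelta^1,\ \lambda_{V_0} + 2x\,dy + y\,dx)$; the confinement of short chords and its dependence on $\varepsilon$ come out of that explicit model rather than from a monotone-coordinate heuristic. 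You should either carry out that computation or, as the paper does, simply invoke it.
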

		\begin{proof}
			This is precisely \cite[Lemma 2.3]{asplund2020chekanov} and \cite[Lemma 2.24]{asplund2021simplicial}. 

			One of the key points is to prevent Reeb chords that start at a point in $\partial X$ to enter the basic building block $V \times D_\varepsilon T^\ast \varDelta^1$. By general position, a Reeb chord starting on $\ell \subset \partial X$ will not meet the singular Legendrian $\Skel V \subset \partial X$ and with a fixed action bound on the Reeb chords, we can shrink the attaching region a sufficient amount so that every such Reeb chord will stay away from the attaching locus of the basic building block $V \times D_\varepsilon T^\ast \varDelta^1$.
		\end{proof}
		\begin{dfn}[{\cite[Definition 3.2]{asplund2020chekanov}}]\label{dfn:ce_hypersurface}
			Let $X$ be a Weinstein manifold and $V \hookrightarrow \partial X$ a Weinstein hypersurface together with a chosen handle decomposition $h$ of $V$. We define the Chekanov--Eliashberg dg-algebra of the pair $(V,h)$ as
			\[
				CE^\ast((V,h);X) := CE^\ast(\varSigma(h);(X_{V_0})).
			\]
		\end{dfn}
		\begin{lma}\label{lma:dg-subalgebra}
			For any $\mathfrak a > 0$ there is some $\varepsilon > 0$ small enough so that the Chekanov--Eliashberg dg-algebra generated by the Reeb chords of $\varSigma(h,\varepsilon)$ that are contained in the basic building block $V \times D_{\varepsilon} T^\ast \varDelta^1$ of action $< \mathfrak a$ is canonically isomorphic to the Chekanov--Eliashberg dg-algebra generated by Reeb chords of $\partial \ell \subset \partial V_0$ of action $< \mathfrak a$.
		\end{lma}
		\begin{proof}
			This is proven in \cite[Corollary 2.6]{asplund2021simplicial}. The Reeb chords of $\varSigma(h)$ that are contained in the basic building block $V \times D_{\varepsilon_0} T^\ast \varDelta^1$ corresponds to Reeb chords of $\partial \ell \subset \partial V_0$ below the given action bound by \cref{lma:one-to-one-corr_reeb_chords}. There are also generic choices of almost complex structures $J$ on $\R \times \partial (X_{V_0})$ and $J_{V_0}$ on $\R \times \partial V_0$ such that we have a one-to-one correspondence between $J$-holomorphic curves in $\R \times \partial (X_{V_0})$ with boundary on $\R \times \varSigma(h)$, and $J_{V_0}$-holomorphic curves in $\R \times \partial V_0$ with boundary on $\R \times \partial \ell$ by \cite[Lemma 2.7]{asplund2020chekanov}.
		\end{proof}
		\begin{lma}[{\cite[Lemma 4.2]{asplund2020chekanov}}]\label{lma:dg-subalg_quasi-iso_loop_space}
			Let $X$ be a Weinstein manifold and $\varLambda \subset \partial X$ a smooth (possibly disconnected) Legendrian submanifold. Let $V$ be the union of a small cotangent neighborhood of each component of $\varLambda$ and let $h$ be a choice of handle decomposition of $V$ such that each component of $V$ has a single top handle. There is a quasi-isomorphism of dg-algebras
			\[
				CE^\ast(\partial \ell; V_0) \cong C_{-\ast} \left(\bigsqcup_{i \in \pi_0(\varLambda)} \varOmega \varLambda_i\right),
			\]
			where $\varLambda_i$ is the $i$-th component of $\varLambda$.
		\end{lma}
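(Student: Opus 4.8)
Proof proposal for Lemma~\ref{lma:dg-subalg_quasi-iso_loop_space} ($CE^\ast(\partial\ell;V_0) \cong C_{-\ast}(\varOmega\varLambda)$).

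The plan is to reduce the statement to the situation of a single Weinstein handle and identify both sides with the cobar construction on chains of the based loop space. Since $h$ has a single top handle, the union of core disks $\ell$ is a single disk $\ell = D^n$ whose boundary $\partial\ell = S^{n-1}$ is a Legendrian sphere in $\partial V_0$, where $V_0$ is (a smoothing of) the subcritical part of $V$. Because $V$ is a small cotangent neighborhood of $\varLambda$ with a single top handle, $V_0$ is Weinstein homotopic to the subcritical Weinstein manifold $T^\ast\varLambda \setminus (\text{cocore of the top handle})$, equivalently the Weinstein manifold obtained from $\varLambda$ by removing a top cell; more precisely $V_0 \simeq (\varLambda \setminus D^n) \times \text{(cotangent fiber data)}$. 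The first step is therefore to make this identification precise: pick a CW-structure on $\varLambda$ with a single top cell, so that $V$ corresponds to the handle decomposition dual to this CW-structure, and $\partial\ell \subset \partial V_0$ is the attaching Legendrian of the top handle.

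Next I would invoke the known computation of the Chekanov--Eliashberg dg-algebra of the attaching Legendrian of a top handle inside a subcritical Weinstein domain in terms of the loop space. This is the surgery/duality formula of Bourgeois--Ekholm--Eliashberg and its refinement by Ekholm--Ng and Ekholm (the ``$CE = $ cobar of $\varOmega$'' philosophy used throughout~\cite{asplund2020chekanov}): for the Legendrian $\partial\ell$ sitting in the ideal boundary of $V_0$, Reeb chords of $\partial\ell$ correspond to the Morse-theoretic or cellular generators, and the differential counts holomorphic disks which, via the adjunction of the cotangent neighborhood, are identified with broken Morse flow trees / loops based at the top cell. Concretely, $CE^\ast(\partial\ell;V_0)$ is quasi-isomorphic to the cobar algebra $\Omega C_\ast(\varLambda)$ built from the cellular chains, which by Adams' classical cobar theorem (Adams' cobar equivalence, valid since $\varLambda$ is connected and simply connected — or using the chains-level statement for the fundamental groupoid in general) is quasi-isomorphic to $C_{-\ast}(\varOmega\varLambda)$. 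The grading conventions match: Reeb chords of $\partial\ell$ are graded so that the generator dual to a $k$-cell sits in degree making the total algebra $C_{-\ast}(\varOmega\varLambda)$, with the desired homological degree shift.

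The key technical input — and the step I expect to be the main obstacle — is establishing the precise chain-level quasi-isomorphism rather than just an isomorphism on homology, and checking that all the Reeb chords of $\partial\ell$ (not merely those below a fixed action) contribute correctly, i.e. that the action filtration is exhausting and the identification of Lemma~\ref{lma:one-to-one-corr_reeb_chords} globalizes. One must argue that after the perturbation in Lemma~\ref{lma:one-to-one-corr_reeb_chords} the moduli spaces of holomorphic disks defining the differential on $CE^\ast(\partial\ell;V_0)$ are transversely cut out and in bijection with the moduli of Morse flow trees computing the cobar differential — this is essentially the content of~\cite[Lemma 4.2]{asplund2020chekanov}, and I would cite that proof, filling in only the identification of $V_0$ with the punctured cotangent bundle of $\varLambda$ and the verification that the single-top-handle hypothesis makes $\partial\ell$ exactly the attaching sphere of the top cell. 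The remaining bookkeeping — matching signs, checking that the product on $CE^\ast$ corresponds to the Pontryagin product on $C_{-\ast}(\varOmega\varLambda)$, and verifying that the quasi-isomorphism is one of dg-algebras and not merely of complexes — is routine once the moduli-space identification is in place.
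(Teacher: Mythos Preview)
Your approach is genuinely different from the paper's. The paper does \emph{not} attempt a direct identification of $CE^\ast(\partial\ell;V_0)$ with a cobar construction followed by Adams' theorem. Instead it constructs a map $\varphi\colon CE^\ast(\partial\ell;V_0)\to C_{-\ast}(\varOmega\varLambda)$ by counting $J$-holomorphic curves in $V_0$, and then proves $\varphi$ is a quasi-isomorphism by a two-out-of-three argument: there is a commutative-up-to-homotopy triangle
\[
\begin{tikzcd}[row sep=scriptsize, column sep=scriptsize]
CW^\ast(F,F) \dar{\varPsi} \rar{\varPhi} & C_{-\ast}(\varOmega\varLambda) \\
CE^\ast(\partial\ell;V_0) \urar[swap]{\varphi} &
\end{tikzcd}
\]
where $F$ is a cotangent fiber of $T^\ast\varLambda$, $\varPsi$ is the Bourgeois--Ekholm--Eliashberg surgery quasi-isomorphism, and $\varPhi$ is Abouzaid's quasi-isomorphism $CW^\ast(F,F)\simeq C_{-\ast}(\varOmega\varLambda)$. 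Since $\varPsi$ and $\varPhi$ are known quasi-isomorphisms, so is $\varphi$.

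Your route via cobar and Adams is conceptually appealing, but it carries a hazard you only half-acknowledge: the classical Adams cobar equivalence requires $\varLambda$ simply connected, and the Legendrians of interest here (e.g.\ $\varLambda_K\cong T^2$ in knot contact homology) are not. You gesture at a groupoid-level extension, but that requires genuine extra input (of Rivera--Zeinalian type), whereas the paper's triangle sidesteps this entirely because Abouzaid's theorem holds without any simply-connectedness hypothesis. A second issue is that your ``direct'' identification of $CE^\ast(\partial\ell;V_0)$ with the cobar algebra is precisely the hard step; you end up citing \cite[Lemma 4.2]{asplund2020chekanov} for it, which is the very lemma under discussion. The paper's strategy avoids this circularity by outsourcing the two nontrivial comparisons to established results and reducing the new content to the construction of $\varphi$ and the homotopy-commutativity of the triangle.
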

		\begin{proof}
			The idea of the proof is to construct a map $\varphi \colon CE^\ast(\partial \ell; V_0) \longrightarrow C_{-\ast} \left(\bigsqcup_{i \in \pi_0(\varLambda)} \varOmega \varLambda_i\right)$ by counting $J_{V_0}$-holomorphic curves in $\R \times \partial V_0$ such that the following diagram commutes up to dg-homotopy.
			\[
				\begin{tikzcd}[row sep=scriptsize, column sep=scriptsize]
					CW^\ast(F,F) \dar{\varPsi} \rar{\varPhi} & C_{-\ast} \left(\bigsqcup_{i \in \pi_0(\varLambda)} \varOmega \varLambda_i\right) \\
					CE^\ast(\partial \ell; V_0) \urar[swap]{\varphi} &
				\end{tikzcd}.
			\]
			Here $F$ denotes the disjoint union of one cotangent fiber of each component in the Weinstein neighborhood $N(\varLambda) \cong T^\ast \varLambda$ of $\varLambda \subset \partial X$. The map $\varPsi$ is the surgery $A_\infty$-quasi-isomorphism from \cite[Theorem 5.8]{bourgeois2012effect}, and $\varPhi$ is the $A_\infty$-quasi-isomorphism from \cite{abouzaid2012on,asplund2019fiber}, see \cite[Lemma 4.2]{asplund2020chekanov} for more details.
		\end{proof}
		The following is now almost an immediate consequence of \cref{lma:dg-subalg_quasi-iso_loop_space} and \cite[Theorem 1.1]{asplund2020chekanov}.
		\begin{thm}[{\cite[Theorem 1.2]{asplund2020chekanov}}]\label{thm:loop_space_dga}
			Let $X$ be a Weinstein manifold and $\varLambda \subset \partial X$ a smooth (possibly disconnected) Legendrian submanifold. Let $V$ be a small cotangent neighborhood of $\varLambda$ and let $h$ be a choice of handle decomposition such that $V$ has a single top handle for each component of $V$. There is a quasi-isomorphism of dg-algebras
			\[
				CE^\ast((V,h);X) \cong CE^\ast \left(\varLambda, C_{-\ast}\left(\bigsqcup_{i\in \pi_0(\varLambda)} \varOmega \varLambda_i\right)\right),
			\]
			where $\varLambda_i$ denotes the $i$-th component of $\varLambda$ and where $CE^\ast(\varLambda, C_{-\ast}(\varOmega \varLambda))$ is the Chekanov--Eliashberg dg-algebra with loop space coefficients as defined in \cite{ekholm2017duality}.
		\end{thm}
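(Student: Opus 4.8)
The plan is to deduce this from the structure of $CE^\ast((V,h);X)$ established in \cite{asplund2020chekanov} together with \cref{lma:dg-subalg_quasi-iso_loop_space}. First I would recall the semi-free structure of $CE^\ast((V,h);X) = CE^\ast(\varSigma(h);X_{V_0})$. By \cref{lma:top_attaaching_spheres_stopped_manifold} the Legendrian $\varSigma(h)$ is the double of the core disks $\ell$ along $\partial\ell\times\varDelta^1$, and by \cref{lma:one-to-one-corr_reeb_chords} its Reeb chords in any fixed action window split into Reeb chords of $\ell\subset\partial X_0$ and Reeb chords of $\partial\ell\subset\partial V_0$. The chords of $\partial\ell$ generate the dg-subalgebra of \cref{lma:dg-subalgebra}, canonically quasi-isomorphic to $CE^\ast(\partial\ell;V_0)$, while the chords of $\ell$ are identified, via the projection of the cotangent neighborhood $V$ onto its zero section $\varLambda$, with the Reeb chords of $\varLambda\subset\partial X$. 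Hence, as a graded algebra, $CE^\ast((V,h);X)$ is generated over $CE^\ast(\partial\ell;V_0)$ by the Reeb chords of $\varLambda$, which is exactly the underlying algebra of a Chekanov--Eliashberg complex of $\varLambda$ with coefficients in $CE^\ast(\partial\ell;V_0)$.

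The key input is the identification of the differential, which is one of the main results of \cite{asplund2020chekanov}: stretching the neck along the separating hypersurface $\partial V_0$ inside $X_{V_0}$ and applying SFT compactness, the differential of a generator coming from $\ell$ is computed by broken holomorphic disks whose top level lies in $X_0$ with boundary on $\ell$ and whose lower levels lie in the symplectization of $\partial V_0$ with boundary on $\partial\ell$; reading the lower levels as elements of $CE^\ast(\partial\ell;V_0)$ and the top level as the count of disks on $\varLambda$ in $X$, this is precisely the differential of the Chekanov--Eliashberg dg-algebra of $\varLambda$ with coefficients in $CE^\ast(\partial\ell;V_0)$ in the sense of \cite{ekholm2017duality}. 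Transversality and gluing for the broken configurations, as well as the bijection on moduli spaces, are supplied by \cite{asplund2020chekanov} and I would cite them as a black box; this is the step I expect to be the main obstacle if one wished to reprove it from scratch.

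It remains to pass from the coefficient algebra $CE^\ast(\partial\ell;V_0)$ to loop space chains. Since $h$ has a single top handle for each component of $V$, equivalently one basepoint on each component $\varLambda_i$ of $\varLambda$, \cref{lma:dg-subalg_quasi-iso_loop_space} (applied componentwise, assembling to the disjoint union) gives a quasi-isomorphism of dg-algebras $CE^\ast(\partial\ell;V_0)\cong C_{-\ast}\bigl(\bigsqcup_{i\in\pi_0(\varLambda)}\varOmega\varLambda_i\bigr)$, compatible with the gradings and structure maps appearing in the differential. A quasi-isomorphism of coefficient dg-algebras inducing a compatible map of the semi-free Chekanov--Eliashberg extensions is itself a quasi-isomorphism, by the standard comparison of the two action-filtration spectral sequences, whose pages agree by \cref{lma:dg-subalg_quasi-iso_loop_space}. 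Composing with the identification of the differential from the previous paragraph yields the quasi-isomorphism $CE^\ast((V,h);X)\cong CE^\ast\bigl(\varLambda, C_{-\ast}(\bigsqcup_{i\in\pi_0(\varLambda)}\varOmega\varLambda_i)\bigr)$, as desired.
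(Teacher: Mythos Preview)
Your proposal is correct and follows essentially the same strategy as the argument in \cite[Section 4]{asplund2020chekanov}, which is all the paper's own proof cites. You have accurately reconstructed the skeleton: identify the generators of $CE^\ast((V,h);X)$ via \cref{lma:top_attaaching_spheres_stopped_manifold} and \cref{lma:one-to-one-corr_reeb_chords}, recognize the coefficient subalgebra via \cref{lma:dg-subalgebra}, identify the differential as that of a Chekanov--Eliashberg dg-algebra with coefficients in $CE^\ast(\partial\ell;V_0)$, and then invoke \cref{lma:dg-subalg_quasi-iso_loop_space} together with a filtration/spectral-sequence argument to change coefficients to $C_{-\ast}(\varOmega\varLambda)$.

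One minor remark on presentation: your phrase ``stretching the neck along the separating hypersurface $\partial V_0$ inside $X_{V_0}$'' is a slight mischaracterization of the geometry. In \cite{asplund2020chekanov} the relevant curves are analyzed not by an SFT neck-stretch in the usual sense but by studying how disks in $X_{V_0}$ project to the $D_\varepsilon T^\ast\varDelta^1$ factor of the attached handle $V_0\times D_\varepsilon T^\ast\varDelta^1$ and shrinking $\varepsilon$; the outcome (disks on $\varLambda$ in $X$ decorated by boundary contributions from $CE^\ast(\partial\ell;V_0)$) is as you describe, so this does not affect the validity of your outline.
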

		\begin{proof}
			Note that the difference between this statement and \cite[Theorem 1.2]{asplund2021simplicial} is that we allow $\varLambda \subset \partial X$ to be disconnected. The proof is the same as the proof of \cite[Theorem 1.2]{asplund2020chekanov}.
		\end{proof}
		Let $(X,P)$ be a Weinstein pair and let $((C,C'),(\boldsymbol V, \boldsymbol V'))$ be a simplicial decomposition of $(X,P)$. Pick a handle decomposition for each $V_{\sigma_k} \in \boldsymbol V$ and denote the set of all such by $\boldsymbol h$. Similarly let $\boldsymbol h'$ denote the set of chosen handle decompositions of each $V_{\sigma_k'}' \in \boldsymbol V'$. The union of the top attaching spheres of $X \cong \# \boldsymbol V$ is denoted by $\varSigma(\boldsymbol h)$, see \cite[Definition 2.20 and Lemma 2.21]{asplund2021simplicial} for details on the constructions.

		\begin{figure}[!htb]
			\centering
			\includegraphics{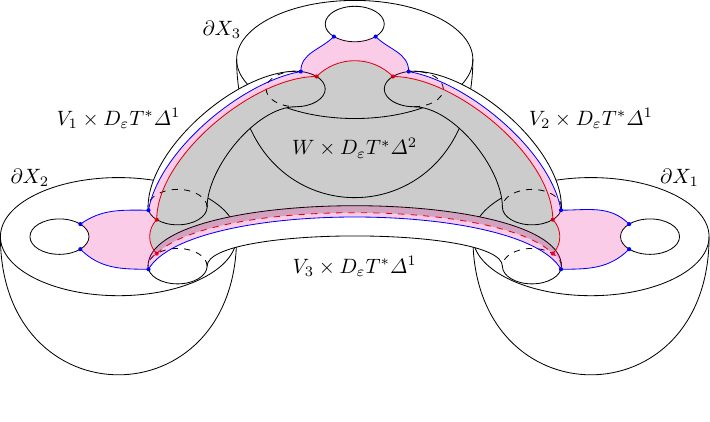}
			\caption{The Weinstein hypersurface $P \hookrightarrow \partial X$ presented as a join with respect to the simplicial decomposition $((C,C'), (\boldsymbol V, \boldsymbol V'), (\boldsymbol A, \boldsymbol A'))$ where $C = C' = \varDelta^2$ and the simplicial subcomplex $C' \hookrightarrow C$ being the identity map.}\label{fig:triple_join_rel}
		\end{figure}

		We now describe the union of the top attaching spheres of $X_P$ with respect to the simplicial decomposition of $(X,P)$.
		Let $\ell' = \bigcup_{\sigma_k' \in C'} \ell'_{\sigma_k'} = \bigcup_{\sigma_k' \in C'} \bigcup_{j} \ell'_{\sigma_k',j}$ denote the union of the core disks of the top handles of each $V'_{\sigma_k'} \in \boldsymbol V'$. Let $i \colon C' \hookrightarrow C$ be the simplicial subcomplex in the simplicial decomposition of $(X,P)$. For every $\sigma_0\in \im i$ we have the Weinstein hypersurface
		\begin{equation}\label{eq:hypersurf}
			\#(\boldsymbol V'_{\supset \sigma_0'} \sqcup_i \boldsymbol V_{\supsetneq \sigma_0}) \hookrightarrow \partial V_{\sigma_0},
		\end{equation}
		as in \cref{dfn:hypersurface_inclusion}, see \cref{fig:triple_join_rel_vertex}.

		Denote the union of the top attaching spheres of the $\#(\boldsymbol V'_{\supset \sigma_0} \sqcup_i \boldsymbol V_{\supsetneq \sigma_0})$ by $\varSigma(\boldsymbol h'_{\sigma_0'} \sqcup_i \boldsymbol h_{\sigma_0})$ where $\boldsymbol h'_{\sigma_0'} \sqcup_i \boldsymbol h_{\sigma_0}$ is the union of handle decompositions of the Weinstein manifolds in $\boldsymbol V'_{\supset \sigma_0} \sqcup_i \boldsymbol V_{\supsetneq \sigma_0}$. Let $\varDelta(\boldsymbol h'_{\sigma_0'} \sqcup_i \boldsymbol h_{\sigma_0})$ be the corresponding union of core disks of the top handles in $\boldsymbol h'_{\sigma_0'} \sqcup_i \boldsymbol h_{\sigma_0}$, and note that topologically $\varDelta(\boldsymbol h'_{\sigma_0'} \sqcup_i \boldsymbol h_{\sigma_0})$ is a union of $(n-1)$-disks in $\partial V_{\sigma_0}$ via the Weinstein embedding \eqref{eq:hypersurf}.
		\begin{figure}[!htb]
			\centering
			\includegraphics[scale=0.7]{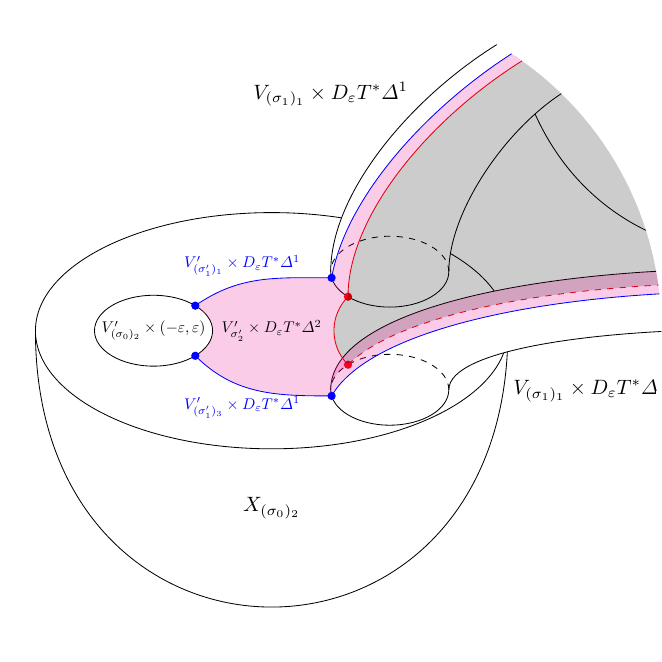}
			\caption{The Weinstein hypersurface $\#(\boldsymbol V'_{\supset (\sigma_0)_2} \sqcup_i \boldsymbol V_{\supsetneq (\sigma_0)_2}) \hookrightarrow \partial X_{(\sigma_0)_2}$ which is a part of the construction of $P \hookrightarrow \partial X$ with respect to the simplicial decomposition $((C,C'), (\boldsymbol V, \boldsymbol V'))$ as in \cref{fig:triple_join_rel}. The Weinstein hypersurfaces $V'_{(\sigma_1')_3} \#_{V'_{\sigma_2'}} W \hookrightarrow \partial V_3$ and $V'_{(\sigma_1')_1} \#_{V'_{\sigma_2'}} W \hookrightarrow \partial V_1$ are also depicted.}\label{fig:triple_join_rel_vertex}
		\end{figure}

		We then extend every $\varDelta(\boldsymbol h'_{\sigma_0'} \sqcup_i \boldsymbol h_{\sigma_0})$ trivially over $\# \boldsymbol V \cong X$ and denote the result by $\varSigma_P(\boldsymbol h')$. The result is the union of top attaching spheres of the Weinstein manifold $P \cong \# \boldsymbol V'$, see \cite[Definition 2.20]{asplund2021simplicial}. By construction we have a decomposition $\varSigma_P(\boldsymbol h') = \bigcup_{\sigma_k' \in C'} \varSigma_{P, \sigma_k'}(\boldsymbol h')$, where $\varSigma_{P, \sigma_k'}(\boldsymbol h')$ is the union of the top attaching spheres of $P$ living over the locus corresponding to the $k$-face $\sigma_k' \in C'$.
		\begin{figure}[!htb]
			\centering
			\includegraphics[scale=1]{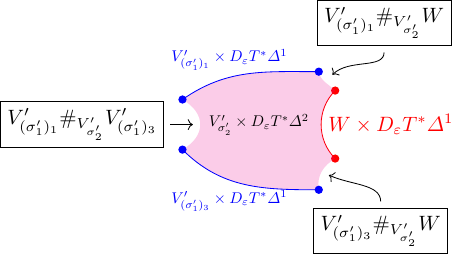}
			\caption{The first gluing step in the construction of the Weinstein hypersurface $\#(\boldsymbol V'_{\supset (\sigma_0)_2} \sqcup_i \boldsymbol V_{\supsetneq (\sigma_0)_2}) \hookrightarrow \partial X_{(\sigma_0)_2}$ in \cref{fig:triple_join_rel_vertex}.}\label{fig:triple_join_rel_vertex_handle}
		\end{figure}
		\begin{lma}
			The union of $(n-1)$-spheres $\varSigma_P(\boldsymbol h')$ is the union of the attaching spheres of the top handles of the Weinstein manifold $P$.
		\end{lma}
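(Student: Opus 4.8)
The plan is to argue by induction on $\dim C'$, comparing the recipe that defines $\varSigma_P(\boldsymbol h')$ with the recipe for the union of top attaching spheres of a Weinstein manifold assembled from building blocks as in \cite[Definition 2.20]{asplund2021simplicial}, carried out relative to the ambient stopped manifold exactly as in \cref{lma:top_attaaching_spheres_stopped_manifold}. Recall that $P \cong \# \boldsymbol V'$ is assembled by gluing the basic building blocks $V'_{\sigma_k'} \times D_\varepsilon T^\ast \varDelta^k$ over the faces of $C'$, and that for the \emph{absolute} gluing \cite[Lemma 2.21]{asplund2021simplicial} identifies the union of top attaching spheres of $\# \boldsymbol V'$ with the Legendrian obtained by taking, over each $k$-face $\sigma_k' \in C'$, the core disks of the top handles of $V'_{\sigma_k'}$, spreading them trivially over the $D_\varepsilon T^\ast \varDelta^k$-factor, doubling them over the face poset as in \cref{lma:top_attaaching_spheres_stopped_manifold}, and gluing the resulting pieces by the Weinstein hypersurface maps in $\boldsymbol V'$. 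Hence it suffices to check that $\varSigma_P(\boldsymbol h')$, together with its decomposition $\varSigma_P(\boldsymbol h') = \bigcup_{\sigma_k' \in C'} \varSigma_{P, \sigma_k'}(\boldsymbol h')$, is produced by this same recipe once one accounts for the ``extend trivially over $\# \boldsymbol V \cong X$'' step.

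I would first treat the base case $C' = \varDelta^0$: here $\boldsymbol V' = \{P\}$ with $P \hookrightarrow \partial V_{\sigma_0}$ disjoint from the remaining Weinstein hypersurfaces of $\boldsymbol V$ attached to $V_{\sigma_0}$ (cf.\ \cref{dfn:hypersurface_inclusion}), and $\varDelta(\boldsymbol h'_{\sigma_0'} \sqcup_i \boldsymbol h_{\sigma_0})$ is simply the union of core disks of the top handles of $P$. Applying \cref{lma:top_attaaching_spheres_stopped_manifold} to the $1$-simplex handle $V_{\sigma_0} \times D_\varepsilon T^\ast \varDelta^1$ and then extending trivially over the remaining building blocks of $\# \boldsymbol V$ — which is precisely the operation defining $\varSigma_P(\boldsymbol h')$ in this case — exhibits $\varSigma_P(\boldsymbol h')$ as the doubling $\ell' \cup (\partial \ell' \times \varDelta^1) \cup \ell'$ of these core disks, disjoint from $\varSigma(\boldsymbol h)$, hence as the union of top attaching spheres. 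For the inductive step, fix a top face $\sigma_k' \in C'$. For each vertex $\sigma_0 \in \im i$ lying below $\sigma_k$, the Weinstein hypersurface $\#(\boldsymbol V'_{\supset \sigma_0'} \sqcup_i \boldsymbol V_{\supsetneq \sigma_0}) \hookrightarrow \partial V_{\sigma_0}$ of \eqref{eq:hypersurf} contains the building block $V'_{\sigma_k'} \times D_\varepsilon T^\ast \varDelta^k$ as a sub-block — this is built into the $\sqcup_i$-operation of \cref{dfn:hypersurface_inclusion} — and spreading its top core disks first over $V_{\sigma_0} \times D_\varepsilon T^\ast \varDelta^0$ and then over each higher $V_{\sigma_j} \times D_\varepsilon T^\ast \varDelta^j$ with $\sigma_j \supseteq \sigma_0$ restricts, after forgetting the $\boldsymbol V$-part, to the spreading of $\varDelta(\boldsymbol h'_{\sigma_0'} \sqcup_i \boldsymbol h_{\sigma_0})$ over $D_\varepsilon T^\ast \varDelta^k$ that defines $\varSigma_{P, \sigma_k'}(\boldsymbol h')$. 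Gluing these contributions along the faces $\sigma_{k-1}' \subsetneq \sigma_k'$ by the hypersurface maps of $\boldsymbol V'$, which by \cref{dfn:hypersurface_inclusion} are precisely the restrictions of $P \hookrightarrow \partial X$, reproduces the passage from $(k-1)$-simplex handles to $k$-simplex handles of $P$ in \cite[Definition 2.20]{asplund2021simplicial}. Since the inductive hypothesis identifies the contributions of all faces $\sigma_j'$ with $j < k$ with the top attaching spheres of the corresponding sub-decomposition, the full union $\varSigma_P(\boldsymbol h')$ is the union of top attaching spheres of $P \cong \# \boldsymbol V'$.

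The main obstacle is the bookkeeping in the inductive step: one must verify that ``extending a core disk $\varDelta(\boldsymbol h'_{\sigma_0'} \sqcup_i \boldsymbol h_{\sigma_0})$ trivially over $\# \boldsymbol V \cong X$ and then doubling it over the face poset of $C'$'' produces literally the same Legendrian (up to Legendrian isotopy supported near the relevant handles) as the attaching sphere of the corresponding top handle of $V'_{\sigma_k'} \times D_\varepsilon T^\ast \varDelta^k$ inside $\# \boldsymbol V'$. This is a direct but somewhat intricate comparison of two nested gluing recipes, and I would organize it by a secondary induction on the depth of $\sigma_k$ inside $C$; the key input is that, by definition of a hypersurface inclusion (\cref{dfn:hypersurface_inclusion}) and of a simplicial decomposition of a Weinstein pair (\cref{dfn:simplicial_decomposition_pair}), the Weinstein hypersurfaces that implement the gluing on the $P$-level are compatible with the $X$-data implementing the gluing on the $X$-level — and, in the relevant local models $V \times D_\varepsilon T^\ast \varDelta^j$, are carried by regions disjoint from the attaching data of the $\boldsymbol V$-handles — so that the two spreading/doubling procedures commute. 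Once this compatibility is established, the statement follows by combining the face-by-face identification above with \cite[Lemma 2.21]{asplund2021simplicial} and \cref{lma:top_attaaching_spheres_stopped_manifold}.
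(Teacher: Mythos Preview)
Your overall strategy---reduce the statement to \cite[Lemma 2.21]{asplund2021simplicial} applied to the simplicial decomposition $(C',\boldsymbol V')$ of $P$---is exactly what the paper does; its entire proof is the single sentence ``This is repetition of the proof of \cite[Lemma 2.21]{asplund2021simplicial}.''

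However, your execution contains a genuine confusion that would make the write-up incorrect as stated. You repeatedly conflate $\varSigma_P(\boldsymbol h')$ with the \emph{doubled} Legendrian $\varSigma_{X_P}(\boldsymbol h')$ of \eqref{eq:att_spheres_hypersurface}. In particular, in your base case $C'=\varDelta^0$ you claim that $\varSigma_P(\boldsymbol h')$ is ``the doubling $\ell'\cup(\partial\ell'\times\varDelta^1)\cup\ell'$'', and you invoke \cref{lma:top_attaaching_spheres_stopped_manifold} to produce it. That is false: $\varSigma_P(\boldsymbol h')\subset\partial P_0$ is the union of \emph{attaching spheres of $P$}, not the attaching spheres of the $1$-simplex handle $P_0\times D_\varepsilon T^\ast\varDelta^1$ used to build $X_P$. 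In the base case it is literally $\partial\ell'$, not any doubling; the doubling only appears later, in the separate construction of $\varSigma_{X_P}(\boldsymbol h')$. Consequently \cref{lma:top_attaaching_spheres_stopped_manifold} is irrelevant to the present lemma, and the ``carried out relative to the ambient stopped manifold'' framing should be dropped entirely.

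Once you strip that out, what remains is precisely the content of \cite[Lemma 2.21]{asplund2021simplicial}: the operation ``extend each $\varDelta(\boldsymbol h'_{\sigma_0'}\sqcup_i\boldsymbol h_{\sigma_0})$ trivially over the simplicial handles of $\#\boldsymbol V$'' restricts, on the $P$-part, to the operation ``extend the core disks of $\boldsymbol h'$ trivially over the simplicial handles of $\#\boldsymbol V'$'', because by \cref{dfn:hypersurface_inclusion} the hypersurface $P\hookrightarrow\partial X$ is assembled compatibly with the simplicial structure. Your inductive bookkeeping is aimed at this compatibility and is fine in spirit, but the argument does not need a secondary induction on depth in $C$; it is enough to observe that the trivial extension over $X$ is, over the locus of each face $\sigma_k'\in C'$, the same as the trivial extension over $\#\boldsymbol V'$ prescribed in \cite[Definition 2.20]{asplund2021simplicial}.
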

		\begin{proof}
			This is repetition of the proof of \cite[Lemma 2.21]{asplund2021simplicial}.
		\end{proof}
		Let $\varDelta_P(\boldsymbol h')$ denote the union of the core disks of the top handles in $\boldsymbol h'$. Since we have a Weinstein hypersurface $P \hookrightarrow \partial X$, it follows that $\varDelta_P(\boldsymbol h')$ is a Legendrian submanifold in $\partial X$ with boundary in $\partial P$. Like the attaching spheres, the core disks also decompose as $\varDelta_P(\boldsymbol h') = \bigcup_{\sigma_k' \in C'} \varDelta_{P, \sigma_k'}(\boldsymbol h')$ where $ \varDelta_{P, \sigma_k'}(\boldsymbol h')$ is the core disk with boundary $\partial \varDelta_{P, \sigma_k'}(\boldsymbol h') = \varSigma_{P, \sigma_k'}(\boldsymbol h')$.

		Our next task is to describe the Chekanov--Eliashberg dg-algebra of the pair $(P, \boldsymbol h')$ in $X$. To that end, we construct $(X_{P})_0$ by attaching the handle $P_0 \times D_\varepsilon T^\ast \varDelta^1$ to $X \sqcup (\R \times (P \times \R))$. By \cref{lma:top_attaaching_spheres_stopped_manifold} we have that the union of attaching spheres of $(X_{P})_0$ is
		\begin{equation}\label{eq:att_spheres_hypersurface}
			\varSigma_{X_P}(\boldsymbol h') := \varDelta_P(\boldsymbol h') \cup_{\partial \varDelta_P(\boldsymbol h') \times \left\{-1\right\}}(\partial \varDelta_P(\boldsymbol h') \times \varDelta^1) \cup_{\partial \varDelta_P(\boldsymbol h') \times \left\{1\right\}} \varDelta_P(\boldsymbol h').
		\end{equation}
		Below a given action bound on Reeb chords, and for arbitrarily thin simplicial handles it follows from \cref{lma:one-to-one-corr_reeb_chords} that the Reeb chords of $\varSigma_{X_P}(\boldsymbol h')$ corresponds to Reeb chords of $\varDelta_P(\boldsymbol h') \subset \partial X_0$ and Reeb chords of $\partial \varDelta_P(\boldsymbol h') = \varSigma_P(\boldsymbol h') \subset \partial P_0$.
		\begin{dfn}[{\cite[Definition 2.22]{asplund2021simplicial}}]
			Let $(C, \boldsymbol V)$ be a simplicial decomposition of $X$ and let $\boldsymbol h$ be a choice of handle decomposition of $\boldsymbol V$. Denote the union of core disks of the top handles in $\boldsymbol h$ by $\ell$.
			\begin{enumerate}
				\item Let $\mathcal R_X(\sigma_k)$ denote the set of Reeb chords of $\partial \ell_{\sigma_k} \cup \bigcup_{\substack{\sigma_i \supset \sigma_k \\ k+1 \leq i \leq m}}(\ell_{\sigma_i} \times \varDelta^{i-k-1}) \subset \partial V^{(k)}_{\sigma_k,0}$ for $k\in \left\{0,\ldots, m\right\}$, where $\ell_{\sigma_i} \times \varDelta^{i-k-1}$ are the core disks of the top handles of $V^{(i)}_{\sigma_i,0} \times D_\varepsilon T^\ast \varDelta^{i-k-1} \hookrightarrow \partial V^{(k)}_{\sigma_k,0}$.
				\item Let $\mathcal R_X(\sigma_k, \mathfrak a)$ denote the set of Reeb chords in $\mathcal R_X(\sigma_k)$ of action $< \mathfrak a$.
			\end{enumerate}
		\end{dfn}
		\begin{dfn}
			Let $((C,C'), (\boldsymbol V, \boldsymbol V'))$ be a simplicial decomposition of the Weinstein pair $(X,P)$. Let $\boldsymbol h$ and $\boldsymbol h'$ be choices of handle decompositions of $\boldsymbol V$ and $\boldsymbol V'$ respectively. Denote the union of core disks of the top handles in $\boldsymbol h$ and $\boldsymbol h'$ by $\ell$ and $\ell'$ respectively.
			\begin{enumerate}
				\item Let $\mathcal R_{(X,P)}(\sigma_k')$ denote the set of Reeb chords of $\ell'_{\sigma_k'} \cup \bigcup_{\substack{\sigma_i' \supset \sigma_k'\\ k+1 \leq i \leq m'}} (\ell'_{\sigma_i'}\times \varDelta^{i-k})$ for $k\in \left\{0,\ldots,m'\right\}$, where $\ell'_{\sigma_i'}\times \varDelta^{i-k}$ are the core disks of the top handles of $V'^{(i)}_{\sigma_i',0} \times D_\varepsilon T^\ast \varDelta^{i-k} \hookrightarrow \partial V_{\sigma_k,0}^{(k)}$.
				\item Let $\mathcal R_{(X,P)}(\sigma_k',\mathfrak a)$ denote the set of Reeb chords in $\mathcal R_{(X,P)}(\sigma_k')$ of action $< \mathfrak a$.
			\end{enumerate}
		\end{dfn}
		Recall from \cite[Section 2.2.4]{asplund2021simplicial} that the precise construction of $\# \boldsymbol V$ depends on a size parameter $\boldsymbol \varepsilon = \left\{0 < \varepsilon_m < \cdots < \varepsilon_1\right\}$, where each $\varepsilon_k > 0$ is the size of the simplicial handle $H^k_{\varepsilon_k}(V^{2n-2k}_{\sigma_k})$, see \cite[Definition 2.11]{asplund2021simplicial}. For notational simplicity we write $0 < \boldsymbol \varepsilon < \delta$ to mean $0 < \varepsilon_m$ and $\varepsilon_1 < \delta$.
		\begin{lma}\label{lma:one_to_one_corr_reeb_chords_core_disks}
			For all $\mathfrak a > 0$ there exists some $\delta > 0$ and an arbitrary small perturbation of $\boldsymbol V_0$ such that for all $0 < \boldsymbol \varepsilon < \delta$ the following holds.
			\begin{enumerate}
				\item There is a one-to-one grading preserving correspondence between Reeb chords of action $< \mathfrak a$ of $\varDelta_P(\boldsymbol h') \subset \partial X_0$ and Reeb chords in $\bigcup_{\substack{\sigma_k' \in C' \\ 0 \leq k \leq m'}} \mathcal R_{(X,P)}(\sigma_k', \mathfrak a)$.
				\item There is a one-to-one grading preserving correspondence between Reeb chords of $\partial \varDelta_P(\boldsymbol h') \subset \partial P_0$ of action $< \mathfrak a$ and Reeb chords in $\bigcup_{\substack{\sigma_k' \in C' \\ 0 \leq k \leq m'}} \mathcal R_P(\sigma_k', \mathfrak a)$.
			\end{enumerate}
		\end{lma}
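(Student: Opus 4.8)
The plan is to obtain both correspondences by iterating \cref{lma:one-to-one-corr_reeb_chords} over the faces of $C'$, exactly as in the proof that the Reeb chords below a fixed action bound of the top attaching spheres of a simplicially decomposed Weinstein manifold are counted by $\bigcup_{\sigma_k} \mathcal R_X(\sigma_k,\mathfrak a)$ in \cite{asplund2021simplicial}. One level of this reduction has already been carried out in the paragraph preceding the statement: for thin enough simplicial handles the Reeb chords of $\varSigma_{X_P}(\boldsymbol h')$ below action $\mathfrak a$ split into those of $\varDelta_P(\boldsymbol h') \subset \partial X_0$ and those of $\partial \varDelta_P(\boldsymbol h') = \varSigma_P(\boldsymbol h') \subset \partial P_0$, so it remains only to identify each of these two sets combinatorially, which is the content of (1) and (2).

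Statement (2) is the non-relative statement applied to the Weinstein manifold $P \cong \# \boldsymbol V'$ with its simplicial decomposition $(C', \boldsymbol V')$: since $\varSigma_P(\boldsymbol h')$ is the union of attaching spheres of the top handles of $P$, the desired grading-preserving bijection with $\bigcup_{\sigma_k' \in C'} \mathcal R_P(\sigma_k',\mathfrak a)$ is precisely the corresponding result of \cite{asplund2021simplicial} in the absolute case. I would recall its proof: downward induction on $\dim \sigma_k'$, where at each step the $k$-simplex handle $V'_{\sigma_k'} \times D_{\varepsilon_k}T^\ast \varDelta^k$ is, for $\varepsilon_k$ small, in the situation of \cref{lma:one-to-one-corr_reeb_chords} (with $\varDelta^k$ in place of $\varDelta^1$), so that its short Reeb chords are those of $\ell'_{\sigma_k'}$ together with those of $\partial \ell'_{\sigma_k'}$, the latter being redistributed onto the strictly larger faces $\sigma_i' \supset \sigma_k'$ through the $\varDelta^{i-k}$-factors appearing in the definition of $\mathcal R_P(\sigma_k')$; Reeb chords crossing a simplex factor have action bounded below by a quantity that diverges as the handle sizes shrink, hence do not survive below $\mathfrak a$.

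For statement (1), $\varDelta_P(\boldsymbol h')$ decomposes as $\bigcup_{\sigma_k' \in C'} \varDelta_{P,\sigma_k'}(\boldsymbol h')$, and over a vertex $\sigma_0 = i(\sigma_0')$ the relevant piece is the union of core disks of the top handles of the Weinstein hypersurface $\#(\boldsymbol V'_{\supset \sigma_0'} \sqcup_i \boldsymbol V_{\supsetneq \sigma_0}) \hookrightarrow \partial V_{\sigma_0}$ from \eqref{eq:hypersurf}, which itself carries a simplicial decomposition. I would run the same downward induction as in (2), but now tracking core disks instead of attaching spheres as in \cref{lma:top_attaaching_spheres_stopped_manifold}: at each step \cref{lma:one-to-one-corr_reeb_chords} identifies the short Reeb chords of the current union of core disks with those of $\ell'_{\sigma_k'}$ together with those of $\partial \ell'_{\sigma_k'}$, and the latter are distributed onto the faces $\sigma_i' \supset \sigma_k'$ through the $\varDelta^{i-k}$-factors in the definition of $\mathcal R_{(X,P)}(\sigma_k')$. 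Extending each piece trivially over the remaining simplicial handles of $\# \boldsymbol V$ creates no new Reeb chords below $\mathfrak a$ for thin handles, again by \cref{lma:one-to-one-corr_reeb_chords}. Taking the union over all vertices $\sigma_0 \in \im i$, equivalently over all faces of $C'$, gives the identification with $\bigcup_{\sigma_k' \in C'} \mathcal R_{(X,P)}(\sigma_k',\mathfrak a)$.

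Finally, for the quantifiers: $C'$ has finitely many faces, each producing from \cref{lma:one-to-one-corr_reeb_chords} a threshold $\delta_{\sigma_k'} > 0$ and a small perturbation of the relevant $V_{\sigma_k,0}$ supported in a collar of one handle, so one sets $\delta := \min_{\sigma_k'} \delta_{\sigma_k'}$ and patches the perturbations, whose supports may be taken disjoint, into a single small perturbation of $\boldsymbol V_0$. I expect the only real difficulty to be organizational: matching the ordered family of thin handle sizes $\varepsilon_m < \cdots < \varepsilon_1$ to the combinatorics of $C'$ and verifying at each stage that the surviving short Reeb chords are exactly those not crossing a simplex factor. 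No geometric input beyond \cref{lma:one-to-one-corr_reeb_chords} should be required.
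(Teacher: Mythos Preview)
Your proposal is correct and follows essentially the same approach as the paper: both arguments localize the short Reeb chords to the centers of the simplicial handles (the paper by directly invoking \cite[Lemma~2.24]{asplund2021simplicial} to land in $\pi^{-1}(0,0)$ in each building block, you by iterating \cref{lma:one-to-one-corr_reeb_chords} over the faces of $C'$) and then read off the description of $\varDelta_P(\boldsymbol h')$ in each block. The paper treats (1) first and says (2) is identical, whereas you treat (2) as the known absolute case and adapt for (1), but this is only an expository reordering.
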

		\begin{proof}
			\begin{enumerate}
				\item It follows from the construction of $\# \boldsymbol V$ that Reeb chords of $\varDelta_P(\boldsymbol h') \subset \partial X_0$ of action $< \mathfrak a$ (possibly after shrinking the size of the simplicial handles) are contained in $\pi^{-1}(0,0)$ where $\pi \colon \partial V^{(k)}_{\sigma_k} \times D_\varepsilon T^\ast \varDelta^k \longrightarrow D_\varepsilon T^\ast \varDelta^k$ is the projection to the second factor, corresponding to each $\sigma_k\in C$, see \cite[Lemma 2.24]{asplund2021simplicial}. It follows from the construction of $\varDelta_P(\boldsymbol h')$ that the part lying in the positive contact boundary of $V^{(k)}_{\sigma_k} \times D_\varepsilon T^\ast \varDelta^k$ has the form $(\ell'_{\sigma_k'} \times \varDelta^k) \cup \bigcup_{\substack{\sigma_i' \supset \sigma_k'\\ k+1 \leq i \leq m'}} (\ell'_{\sigma_i'}\times \varDelta^{i})$ from which the result follows.
				\item The proof is the same as in (1) above.
			\end{enumerate}
		\end{proof}
	\subsection{Simplicial descent for Weinstein pairs}\label{sec:descent_hypersurfaces}
		Let $((C,C'), (\boldsymbol V, \boldsymbol V'))$  be a simplicial decomposition of a Weinstein pair $(X,P)$. In this section we prove that we have a gluing formula that computes the Chekanov--Eliashberg dg-algebra $CE^\ast((P, \boldsymbol h'); X)$. In a special case this may be regarded as a gluing formula for the Chekanov--Eliashberg dg-algebra with loop space coefficients, see \cref{rmk:gluing_of_loop_spaces}.

		Let $\boldsymbol h$ and $\boldsymbol h'$ be handle decompositions of $\boldsymbol V$ and $\boldsymbol V'$ respectively, as in \cref{sec:ce_dga_for_hypersurface_wrt_simp_decomp}. Let $\varSigma_{X_P}(\boldsymbol h') \subset \partial (X_{P})_0$ be the union of the top attaching spheres of the simplicial handle $P_0 \times D_\varepsilon T^\ast \varDelta^1$. We use the notation
		\[
		 \varDelta_{P, \supset \sigma_k'}(\boldsymbol h') := \bigcup_{\substack{\sigma_i' \supset \sigma_k' \\ k\leq i\leq m'}} \varDelta_{P, \sigma_i'}(\boldsymbol h'), \quad \varSigma_{X_P, \supset \sigma_k'}(\boldsymbol h') := \bigcup_{\substack{\sigma_i' \supset \sigma_k' \\ k\leq i\leq m'}}\varSigma_{X_P, \sigma_i'}(\boldsymbol h').
		\]
		From \cite[Section 2.5.1]{asplund2021simplicial} we recall the following. For each $k$-face $\sigma_k \in C$ define
			\begin{equation}\label{eq:stopped_handle_data}
				\widetilde V^{(i)}_{\sigma_i}(\sigma_k) := \begin{cases}
					V^{(i)}_{\sigma_i}& \text{if } \sigma_i \supset \sigma_k \\
					(-\infty,0] \times \bigsqcup_{f\in F}(\# \widetilde{\boldsymbol V}_{\supsetneq \sigma_i,f}(\sigma_k)) \times \R & \text{otherwise}
				\end{cases}
			\end{equation}
			where $\widetilde{\boldsymbol V}_{\supsetneq \sigma_i,f}(\sigma_k)$ is the set of Weinstein manifolds $\widetilde V^{(\ell)}_{\sigma_{\ell}}(\sigma_k)$ for $f \supset \sigma_\ell \supsetneq \sigma_i$ with the same Weinstein hypersurfaces as in $\boldsymbol V$, and the Weinstein hypersurfaces induced by the inclusion $\{0\} \times (W \times \left\{0\right\}) \hookrightarrow (-\infty,0] \times (W \times \R)$ for those $\widetilde V^{(\ell)}_{\sigma_\ell}(\sigma_k)$ of the form as in the bottom row of \eqref{eq:stopped_handle_data}. Finally define $\widetilde{\boldsymbol V}(\sigma_k) := \bigcup_{\substack{f\in F \\ \sigma_i \subset f}} \widetilde{\boldsymbol V}_{\supsetneq \sigma_i,f}(\sigma_k)$ and
			\begin{equation}\label{eq:x_stopped_away_from_sigma_k}
				X(\sigma_k) := \# \widetilde{\boldsymbol V}(\sigma_k)\, .
			\end{equation}
			Similarly, for each $\sigma_k \in C$ we define the Weinstein hypersurface $P(\sigma_k) \hookrightarrow \partial X(\sigma_k)$ as follows. If $\sigma_k \not \in \im i$, then $P(\sigma_k) := P$. Else $P(\sigma_k')$ is defined in analogy with $X(\sigma_k)$ above.
		\begin{lma}\label{lma:one_to_one_corr_generators}
			For all $\mathfrak a > 0$ there exists some $\delta > 0$ and an arbitrary small perturbation of $\boldsymbol V_0$ such that for all $0 < \boldsymbol \varepsilon < \delta$ the following holds.
			\begin{enumerate}
				\item There is a one-to-one grading preserving correspondence between Reeb chords of action $< \mathfrak a$ of $\partial \varDelta_{P, \supset \sigma_k'}(\boldsymbol h') \subset \partial P(\sigma_k')_0$ and Reeb chords in $\bigcup_{\sigma_i' \supset \sigma_k'} \mathcal R_{P}(\sigma_i', \mathfrak a)$.
				\item There is a one-to-one grading preserving correspondence between Reeb chords of action $< \mathfrak a$ of $\varDelta_{P, \supset \sigma_k'}(\boldsymbol h') \subset \partial (X(\sigma_k')_{P(\sigma_k')_0})$ and Reeb chords in $\bigcup_{\sigma_i' \supset \sigma_k'} \mathcal R_{(X,P)}(\sigma_i', \mathfrak a)$.
				\item There is a one-to-one grading preserving correspondence between Reeb chords of action $< \mathfrak a$ of $\varSigma_{X_P, \supset \sigma_k'}(\boldsymbol h') \subset \partial (X(\sigma_k')_{P(\sigma_k')_0})$ and Reeb chords in $\bigcup_{\sigma_i' \supset \sigma_k'} (\mathcal R_{(X,P)}(\sigma_i', \mathfrak a) \cup \mathcal R_{P}(\sigma_i', \mathfrak a))$.
			\end{enumerate}
		\end{lma}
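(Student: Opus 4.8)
The plan is to derive all three correspondences from the ``one-step'' and absolute correspondences already established, namely \cref{lma:one-to-one-corr_reeb_chords} and \cref{lma:one_to_one_corr_reeb_chords_core_disks} (together with their analogues for Weinstein manifolds in \cite[Section 2.5]{asplund2021simplicial}), applied to the stopped data $X(\sigma_k')$ and $P(\sigma_k')$ of \eqref{eq:x_stopped_away_from_sigma_k}. Since $C$ and $C'$ are finite, it suffices to produce, for the given $\mathfrak a$, a single $\delta > 0$ and a single perturbation of $\boldsymbol V_0$ that work simultaneously for every face $\sigma_k' \in C'$ and all three claims; I take $\delta$ to be the minimum of the finitely many thresholds produced by the cited lemmas, and perturb once so that all the finitely many genericity requirements hold at the same time. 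The three claims are then proved independently, feeding the output of the first two into the third.

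For \textbf{(1)}, I observe that $P(\sigma_k')$ carries the simplicial decomposition obtained by restricting $(C',\boldsymbol V')$ to the faces $\sigma_i' \supset \sigma_k'$ (the pieces of $P$ not meeting $\sigma_k'$ having been replaced by half-infinite stabilized collars as in \eqref{eq:stopped_handle_data}), and that $\partial \varDelta_{P, \supset \sigma_k'}(\boldsymbol h') = \varSigma_{P, \supset \sigma_k'}(\boldsymbol h')$ is its union of top attaching spheres. Applying \cref{lma:one_to_one_corr_reeb_chords_core_disks}(2) to this restricted decomposition identifies, below action $\mathfrak a$ and for $\boldsymbol\varepsilon$ small, the Reeb chords of $\partial \varDelta_{P, \supset \sigma_k'}(\boldsymbol h') \subset \partial P(\sigma_k')_0$ with those in $\bigcup_{\sigma_i' \supset \sigma_k'} \mathcal R_P(\sigma_i', \mathfrak a)$; the stabilized collars contribute no Reeb chords of action $< \mathfrak a$, so nothing is lost in restricting from $C'$ to $\{\sigma_i' \supset \sigma_k'\}$. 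For \textbf{(2)}, the same argument applied to the Weinstein pair $(X(\sigma_k'), P(\sigma_k'))$ via \cref{lma:one_to_one_corr_reeb_chords_core_disks}(1) identifies the Reeb chords of $\varDelta_{P, \supset \sigma_k'}(\boldsymbol h') \subset \partial (X(\sigma_k'))_{P(\sigma_k')_0}$ with those in $\bigcup_{\sigma_i' \supset \sigma_k'} \mathcal R_{(X,P)}(\sigma_i', \mathfrak a)$; here one additionally checks, exactly as in \cite[Lemma 2.24]{asplund2021simplicial}, that below the action bound the core disks $\varDelta_{P, \supset \sigma_k'}(\boldsymbol h')$ stay inside $\bigcup_{\sigma_i' \supset \sigma_k'} \pi^{-1}(0)$ over the simplicial handles, a region on which $X(\sigma_k')$ agrees with $X$ by construction, so that the stopping operation away from $\sigma_k'$ neither creates nor destroys low-action chords of $\varDelta_{P, \supset \sigma_k'}(\boldsymbol h')$.

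For \textbf{(3)}, I use \cref{lma:top_attaaching_spheres_stopped_manifold} together with \eqref{eq:att_spheres_hypersurface}: over the faces containing $\sigma_k'$ the union $\varSigma_{X_P, \supset \sigma_k'}(\boldsymbol h')$ is the double $\varDelta_{P, \supset \sigma_k'}(\boldsymbol h') \cup (\partial \varDelta_{P, \supset \sigma_k'}(\boldsymbol h') \times \varDelta^1) \cup \varDelta_{P, \supset \sigma_k'}(\boldsymbol h')$, glued along the two ends inside the simplicial handle $P(\sigma_k')_0 \times D_\varepsilon T^\ast \varDelta^1$. Applying \cref{lma:one-to-one-corr_reeb_chords} to this configuration identifies, below action $\mathfrak a$ and for thin enough handles, the Reeb chords of $\varSigma_{X_P, \supset \sigma_k'}(\boldsymbol h')$ with the disjoint union of the Reeb chords of $\varDelta_{P, \supset \sigma_k'}(\boldsymbol h') \subset \partial (X(\sigma_k'))_{P(\sigma_k')_0}$ and those of $\partial \varDelta_{P, \supset \sigma_k'}(\boldsymbol h') \subset \partial P(\sigma_k')_0$; combining with (1) and (2) yields the stated bijection with $\bigcup_{\sigma_i' \supset \sigma_k'} (\mathcal R_{(X,P)}(\sigma_i', \mathfrak a) \cup \mathcal R_P(\sigma_i', \mathfrak a))$. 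All bijections are grading preserving because each cited lemma is, and neither the collar stabilizations nor the $\varDelta^1$-strip change Conley--Zehnder indices.

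I expect the main obstacle to lie in \textbf{(2)}: unlike in \cref{lma:one_to_one_corr_reeb_chords_core_disks}, the ambient manifold $X(\sigma_k')$ genuinely differs from $X$ outside the star of $\sigma_k'$, so one must argue carefully, and uniformly in $\boldsymbol\varepsilon$, that this replacement is invisible to Reeb chords of $\varDelta_{P, \supset \sigma_k'}(\boldsymbol h')$ of action $< \mathfrak a$. This is the relative analogue of the localization argument of \cite[Section 2.5.1]{asplund2021simplicial}, and the decisive point is the confinement of low-action core-disk chords to the loci $\pi^{-1}(0)$ over the simplicial handles, on which the stopped and unstopped manifolds coincide.
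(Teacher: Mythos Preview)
Your proposal is correct and follows essentially the same approach as the paper. The only cosmetic differences are in citation: for (1) the paper appeals directly to \cite[Lemma 2.36]{asplund2021simplicial} rather than routing through \cref{lma:one_to_one_corr_reeb_chords_core_disks}(2), and for (2) the paper argues directly that chords over faces $\sigma_i' \not\supset \sigma_k'$ disappear because the contact manifold there has the form $W \times \R$ with the Legendrian lying in the $\R$-factor, whereas you phrase the same point as the stabilized collars contributing no low-action chords; both arguments are the same localization of low-action chords to $\pi^{-1}(0)$ over the faces containing $\sigma_k'$, where $X(\sigma_k')$ and $X$ coincide.
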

		\begin{proof}
			\begin{enumerate}
				\item This follows immediately from \cite[Lemma 2.36]{asplund2021simplicial}.
				\item This follows the same idea as \cite[Proof of Lemma 2.36]{asplund2021simplicial}. Namely, for sufficient thin handles, the Reeb chords in $\bigcup_{\sigma_i' \supset \sigma_k'} \mathcal R_{(X,P)}(\sigma_i',\mathfrak a)$ corresponds to Reeb chords of $\varDelta_{P, \supset \sigma_k'}(\boldsymbol h') \subset \partial (X_{P})_0$ appearing in the loci of the contact boundary of $(X_{P})_0$ corresponding to $\sigma_i' \supset \sigma_k'$ in the simplicial decomposition of $X$ in as in the proof of \cref{lma:one_to_one_corr_reeb_chords_core_disks}. These loci in $(X_{P})_0$ are precisely those that are unchanged in the definition of $\widetilde{\boldsymbol V}(\sigma_k')$, see \eqref{eq:stopped_handle_data}. The previously existing Reeb chords of $\varDelta_{P, \supset \sigma_k'}(\boldsymbol h') \subset \partial (X_{P})_0$ lying over critical points in loci corresponding to $i$-faces $\sigma_i' \not \supset \sigma_k'$ disappear when passing to $\partial (X(\sigma_k')_{P(\sigma_k')_0})$, as the contact manifold in these loci are of the form $W \times \R$, where the Legendrian is contained in the $\R$-factor, see \eqref{eq:stopped_handle_data}.
				\item By construction of $\varSigma_{X_P, \supset \sigma_k'}(\boldsymbol h')$ (see \eqref{eq:att_spheres_hypersurface}) this follows from items (1) and (2).
			\end{enumerate}
		\end{proof}

		\begin{thm}\label{thm:gluing_formula_hypersurface_dga}
			\begin{enumerate}
				\item There is a quasi-isomorphism of dg-algebras
				\[
					CE^\ast(\partial \varDelta_P(\boldsymbol h');P_0) \cong \colim_{\sigma_k' \in C'} CE^\ast(\partial \varDelta_{P, \supset \sigma_k'}(\boldsymbol h'); P(\sigma_k')_0).
				\]
				\item There is a quasi-isomorphism of dg-algebras
				\[
					CE^\ast(\varSigma_{X_P}(\boldsymbol h');(X_{P_0})) \cong \colim_{\sigma_k' \in C'} CE^\ast(\varSigma_{X_P, \supset \sigma_k'}(\boldsymbol h'); (X(\sigma_k')_{P(\sigma_k')_0})).
				\]
			\end{enumerate}
		\end{thm}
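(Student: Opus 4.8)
The plan is to obtain both statements from the simplicial descent theorem for Chekanov--Eliashberg dg-algebras of Weinstein manifolds of \cite{asplund2021simplicial}: part (1) is a direct specialization of it, while part (2) is proved by re-running that argument in the stopped Weinstein manifold $(X_P)_0$. For (1) I would observe that $(C',\boldsymbol V')$ is a simplicial decomposition of the Weinstein manifold $P \cong \# \boldsymbol V'$, that its union of top attaching spheres is $\varSigma_P(\boldsymbol h') = \partial \varDelta_P(\boldsymbol h')$, and that $\partial \varDelta_{P, \supset \sigma_k'}(\boldsymbol h') = \varSigma_{P, \supset \sigma_k'}(\boldsymbol h')$ together with the stopped manifolds $P(\sigma_k')_0$ are precisely the data entering the descent theorem; hence (1) is that theorem applied to $P$ with handle data $\boldsymbol h'$, with no new input required.

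For (2) I would carry out the three standard steps of \cite[Section 2.5]{asplund2021simplicial} (cf.\@ \cite[Section 3]{asplund2020chekanov}) in the Weinstein manifold $(X_P)_0$, which is obtained by attaching the $1$-simplex handle $P_0 \times D_\varepsilon T^\ast \varDelta^1$ to $X \sqcup (\R \times (P \times \R))$ and carries the Legendrian $\varSigma_{X_P}(\boldsymbol h')$ of \eqref{eq:att_spheres_hypersurface}. First, \cref{lma:one_to_one_corr_generators}(3), applied globally and to each face $\sigma_k' \in C'$, gives a grading-preserving bijection between the Reeb chords of $\varSigma_{X_P}(\boldsymbol h')$ of action $< \mathfrak a$ and $\bigcup_{\sigma_i' \in C'}(\mathcal R_{(X,P)}(\sigma_i', \mathfrak a) \cup \mathcal R_P(\sigma_i', \mathfrak a))$, which is exactly the underlying graded generating set of the right-hand colimit; so both sides agree as graded algebras in each action window, compatibly with the action filtration. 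Second, for a face inclusion $\sigma_k' \subset \sigma_\ell'$ in $C'$ the inclusion $X(\sigma_\ell')_{P(\sigma_\ell')_0} \hookrightarrow X(\sigma_k')_{P(\sigma_k')_0}$ merely un-stops product loci of the form $W \times \R$ with the relevant Legendrian contained in the $\R$-factor (see \eqref{eq:stopped_handle_data}), which carry no rigid holomorphic disks, so the induced map on Reeb-chord generators is a dg-algebra map; this makes $\sigma_k' \mapsto CE^\ast(\varSigma_{X_P, \supset \sigma_k'}(\boldsymbol h'); X(\sigma_k')_{P(\sigma_k')_0})$ a functor on the face poset of $C'$ whose colimit (after rectification so that all structure maps are inclusions of sub-dg-algebras, as in \cite{asplund2021simplicial}) is the right-hand side of (2).

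The analytic heart is the third step: for $0 < \boldsymbol \varepsilon < \delta$ with $\delta$ depending on the action bound $\mathfrak a$, every rigid $J$-holomorphic disk in $(X_P)_0$ with boundary on $\varSigma_{X_P}(\boldsymbol h')$ contributing to the differential on generators of action $< \mathfrak a$ must be contained in a single $X(\sigma_k')_{P(\sigma_k')_0}$. This is the neck-stretching and SFT-compactness argument of \cite{asplund2021simplicial}: as $\boldsymbol \varepsilon \to 0$ the simplicial handles degenerate, a non-confined disk breaks into a holomorphic building, and its components landing in the product loci $W \times \R$ are forced to be trivial, a contradiction. Combining the three steps, the differential on $CE^\ast(\varSigma_{X_P}(\boldsymbol h'); (X_P)_0)$ restricted to any generator is computed entirely inside one $X(\sigma_k')_{P(\sigma_k')_0}$, hence agrees with the differential in the corresponding term of the colimit; passing to the limit over the action filtration yields the asserted quasi-isomorphism.

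I expect the main obstacle to be this third step for part (2), namely verifying that introducing the hypersurface $P$ --- replacing the loci away from $\sigma_k'$ by the stopped pieces of \eqref{eq:stopped_handle_data} together with $P(\sigma_k')$ rather than by empty pieces --- creates no new holomorphic-curve phenomena, and that the $D_\varepsilon T^\ast \varDelta^1$ handle carrying $P$ interacts with the simplicial $D_\varepsilon T^\ast \varDelta^k$ handles exactly as in the un-stopped case; given the parallel analysis in \cite{asplund2021simplicial, asplund2020chekanov}, this reduces to checking that all the auxiliary product regions remain holomorphically inert. A smaller bookkeeping point is to rectify the $C'$-indexed diagram into one with inclusions of sub-dg-algebras, so that the colimit is computed strictly and automatically has the correct homotopy type.
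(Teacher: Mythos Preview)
Your proposal is correct and follows essentially the same approach as the paper: part (1) is obtained directly from the simplicial descent of \cite{asplund2021simplicial} applied to the simplicial decomposition $(C',\boldsymbol V')$ of $P$, and part (2) re-runs that argument in $(X_P)_0$ using \cref{lma:one_to_one_corr_generators} for the generator bijection and the curve-confinement results of \cite{asplund2021simplicial} for the differential. The only minor difference in emphasis is that the paper phrases the analytic step for (2) not as a neck-stretching/SFT-compactness degeneration, but directly via the notion of \emph{simplicial} $J$-holomorphic curves (\cite[Definition 2.27, Lemma 2.26, Corollary 2.28]{asplund2021simplicial}): since both $(C,\boldsymbol V)$ and $(C',\boldsymbol V')$ are simplicial decompositions, any rigid disk with positive puncture at a Reeb chord of $\varSigma_{X_P,\supset \sigma_k'}(\boldsymbol h')$ projects in each building block $V_{\sigma_k}^{(k)} \times T^\ast \varDelta^k$ to the origin of the zero section (or to stable manifolds thereof), which immediately yields the confinement you want without appealing to a limiting holomorphic building.
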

		\begin{proof}
			Let $\delta, \mathfrak a > 0$ and $0 < \boldsymbol \varepsilon < \delta$.
			\begin{enumerate}
				\item This is exactly the content of \cite[Theorem 1.1]{asplund2021simplicial} whose proof we now give a brief summary of. For any $\sigma_k' \in C'$, define $\mathcal A_{P,\sigma_k'}(\boldsymbol h'; \boldsymbol \varepsilon, \mathfrak a)$ to be the algebra generated by Reeb chords in $\bigcup_{\sigma_i' \supset \sigma_k'} \mathcal R_{P}(\sigma_i', \mathfrak a)$. We equip $\mathcal A_{P,\sigma_k'}(\boldsymbol h'; \boldsymbol \varepsilon, \mathfrak a)$ with the same differential as $CE^\ast(\partial \varDelta_{P, \supset \sigma_k'}(\boldsymbol h',\boldsymbol \varepsilon); P(\sigma_k')_0^{\boldsymbol \varepsilon})$ where $P(\sigma_k')_0^{\boldsymbol \varepsilon}$ is the Weinstein manifold $P(\sigma_k')_0$ constructed using basic building blocks of the specified thickness $\boldsymbol \varepsilon$. This differential is well-defined by the bijection between the generating sets in \cref{lma:one_to_one_corr_generators}(1). Thus by definition we have a canonical isomorphism of dg-algebras already on the chain level
				\[
					\mathcal A_{P,\sigma_k'}(\boldsymbol h'; \boldsymbol \varepsilon, \mathfrak a) \cong CE^\ast(\partial \varDelta_{P, \supset \sigma_k'}(\boldsymbol h',\boldsymbol \varepsilon); P(\sigma_k')_0^{\boldsymbol \varepsilon}).
				\]
				By taking the colimit over all $\sigma_k' \in C'$ we obtain a canonical isomorphism of dg-algebras
				\[
					\colim_{\sigma_k' \in C'} \mathcal A_{P,\sigma_k'}(\boldsymbol h';\boldsymbol \varepsilon, \mathfrak a) \cong CE^\ast(\partial \varDelta_{P}(\boldsymbol h',\boldsymbol \varepsilon);P_0^{\boldsymbol \varepsilon}),
				\]
				because there is a bijection between their generating sets, and \cite[Lemma 2.26]{asplund2021simplicial} shows that the differentials on either side coincide. The key point is that \cite[Lemma 2.26]{asplund2021simplicial} prevents the existence of a $J$-holomorphic curve in $\R \times \partial P_0^{\boldsymbol \varepsilon}$ with positive puncture a Reeb chord completely contained in the basic building block over $\sigma_k' \in C'$, and at least one negative puncture at a Reeb chord completely contained in the basic building block over some $\sigma_\ell'\in C'$ with $\sigma_\ell' \cap \sigma_k' = \varnothing$. Finally, we let $(\mathfrak a, \delta) \to (\infty,0)$ in a controlled way (see \cite[Lemma 2.31]{asplund2021simplicial}) and using the invariance of the Chekanov--Eliashberg dg-algebra (see \cite[Lemma 2.30]{asplund2021simplicial} and references therein) which gives the result.
				\item Similar to the above, we define for any $\sigma_k' \in C'$ the dg-algebra $\mathcal A_{X,P,\sigma_k'}(\boldsymbol h'; \boldsymbol \varepsilon, \mathfrak a)$ to be the algebra generated by Reeb chords in $\bigcup_{\sigma_i' \supset \sigma_k'} (\mathcal R_{(X,P)}(\sigma_i', \mathfrak a) \cup \mathcal R_{P}(\sigma_i', \mathfrak a))$. We equip it with the same differential as $CE^\ast(\varSigma_{X_P, \supset \sigma_k'}(\boldsymbol h',\boldsymbol \varepsilon); (X(\sigma_k')_{P(\sigma_k')_0})^{\boldsymbol \varepsilon})$, which by definition gives a canonical isomorphism of dg-algebras
				\[
					\mathcal A_{X,P,\sigma_k'}(\boldsymbol h'; \boldsymbol \varepsilon, \mathfrak a) \cong CE^\ast(\varSigma_{X_P, \supset \sigma_k'}(\boldsymbol h',\boldsymbol \varepsilon); (X(\sigma_k')_{P(\sigma_k')_0})^{\boldsymbol \varepsilon}).
				\]
				By taking the colimit over $\sigma_k' \in C'$ we get a canonical isomorphism of dg-algebras
				\[
					\colim_{\sigma_k'\in C'} \mathcal A_{X,P,\sigma_k'}(\boldsymbol h'; \boldsymbol \varepsilon, \mathfrak a) \cong CE^\ast(\varSigma_{X_P, \supset \sigma_k'}(\boldsymbol h',\boldsymbol \varepsilon); (X_{P})_0^{\boldsymbol \varepsilon}),
				\]
				because there is a bijection between their generating sets by \cref{lma:one_to_one_corr_generators}(2), and again by construction of simplicial decompositions, the relevant $J$-holomorphic curves with positive puncture at a Reeb chord of $\varSigma_{X_P, \supset \sigma_k'}(\boldsymbol h') \subset \partial (X(\sigma_k')_{P(\sigma_k')_0})$ must be simplicial (see \cite[Definition 2.27]{asplund2021simplicial}) because both $(C, \boldsymbol V)$ and $(C', \boldsymbol V')$ are simplicial decompositions of $X$ and $P$, respectively. Such $J$-holomorphic curves projects in each building block $V^{(k)}_{\sigma_k} \times T^\ast \varDelta^k$ to the origin in the zero section of the second factor, or to stable manifolds of the origin of other building blocks $V^{(\ell)}_{\sigma_\ell} \times T^\ast \varDelta^\ell$ for $\sigma_\ell \supset \sigma_k$, and like the proof of item (1) above, it gives that the differentials coincide.
			\end{enumerate}
		\end{proof}
		\begin{rmk}\label{rmk:category}
			Every colimit appearing in \cref{thm:gluing_formula_hypersurface_dga} is taken in the category $\mathbf{dga}$ consisting of associative, non-commutative, non-unital dg-algebras over varying non-unital rings, which is known to admit colimits, see \cite[Section 2.5]{asplund2021simplicial} for details. Every dg-algebra appearing in this paper is semi-free and hence the colimit is again a semi-free dg-algebra generated by the union of the generators.
		\end{rmk}
		\begin{rmk}\label{rmk:gluing_of_loop_spaces}
			Suppose $((C,C'), (\boldsymbol V,\boldsymbol V'))$ is a simplicial decomposition of the Weinstein pair $(X,T^\ast \varLambda)$, where $\varLambda \subset \partial X$ is a smooth connected Legendrian submanifold, such that the resulting handle decomposition of $P$ which is induced by $\boldsymbol V'$ has a single top handle. In analogy with \cref{thm:loop_space_dga} we have quasi-isomorphisms of dg-algebras $CE^\ast(\partial \varDelta_P,P_0) \cong C_{-\ast}(\varOmega \varLambda)$ and in particular a quasi-isomorphism of dg-algebras
			\[
				CE^\ast(\varSigma_{X_P}(\boldsymbol h');(X_{P_0})) \cong CE^\ast(\varLambda, C_{-\ast}(\varOmega \varLambda); X).
			\]
			The gluing formula in \cref{thm:gluing_formula_hypersurface_dga} may thus be interpreted in this special case as a gluing formula for the Chekanov--Eliashberg dg-algebra with loop space coefficients. However, we should be careful: Each of the dg-subalgebras $CE^\ast(\varSigma_{X_P, \supset \sigma_k'}(\boldsymbol h');(X(\sigma_k)_{P(\sigma_k')})_0)$ is not necessarily quasi-isomorphic to a Chekanov--Eliashberg dg-algebra with loop space coefficients.
		\end{rmk}
	\subsection{Relative Legendrian submanifolds}\label{sec:relative_legendrians}
		We first review a few definitions from \cite{asplund2020chekanov,asplund2021simplicial} before generalizing them to the setting of simplicial decompositions of Weinstein pairs.
		\begin{dfn}[{\cite[Section 6.1]{asplund2020chekanov}}]\label{dfn:relative_leg}
			Let $(X,V)$ be a Weinstein pair. A \emph{Legendrian submanifold relative to $V$} is a Legendrian submanifold-with-boundary $\varLambda \subset \partial X$ such that $\partial \varLambda = \varLambda \cap V \subset \partial V$ is a Legendrian submanifold.
		\end{dfn}
		\begin{figure}[!htb]
			\centering
			\includegraphics{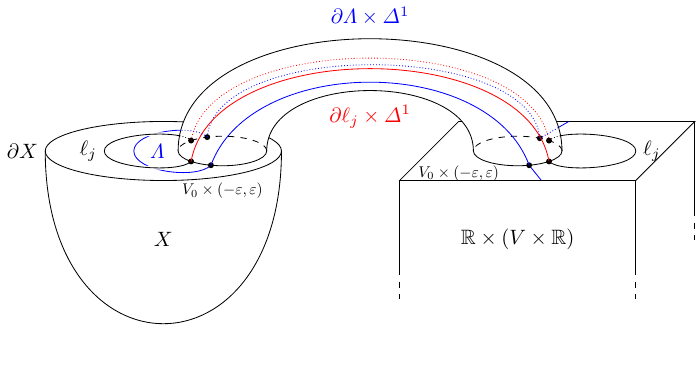}
			\caption{The definition of $CE^\ast(\varLambda, V; X)$ is given by the Chekanov--Eliashberg dg-algebra of the (non-compact) Legendrian submanifold $\varSigma_\varLambda$ which is shown in the figure.}
		\end{figure}
		\begin{dfn}[{\cite[Section 6.1]{asplund2020chekanov}}]\label{dfn:ce_of_rel_leg}
			Let $(X,V)$ be a Weinstein pair, and let $\varLambda$ be a Legendrian submanifold relative to $V$. Let $\varSigma_\varLambda := \varLambda \sqcup_{\partial \varLambda \times \left\{-1\right\}} (\partial \varLambda \times \varDelta^1) \sqcup_{\partial \varLambda \times \left\{1\right\}}( \left\{0\right\} \times \partial \varLambda \times \R) \subset \partial (X_{V_0})$ denote the union of top attaching spheres of $X_V$, and define
			\[
				CE^\ast(\varLambda, V; X) := CE^\ast(\varSigma_\varLambda; (X_{V_0})).
			\]
		\end{dfn}
		\begin{lma}[{\cite[Lemma 6.1]{asplund2020chekanov}}]\label{lma:subalgebra_of_relative_dga}
			The Reeb chords of $\varSigma(h)$ that correspond to Reeb chords of $\partial \ell \subset \partial V_0$ generates a dg-subalgebra of $CE^\ast(\varLambda,V;X)$ which is canonically quasi-isomorphic to $CE^\ast(\partial \varLambda; V_0)$.
		\end{lma}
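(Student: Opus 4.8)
The proof follows the same pattern as that of \cref{lma:dg-subalgebra}, now carried out in the relative setting; indeed the statement is \cite[Lemma 6.1]{asplund2020chekanov}. Recall from \cref{dfn:ce_of_rel_leg} that $CE^\ast(\varLambda,V;X) = CE^\ast(\varSigma_\varLambda; X_{V_0})$, where $\varSigma_\varLambda$ is the union of the three pieces $\varLambda$, $\partial\varLambda\times\varDelta^1$, and $\{0\}\times\partial\varLambda\times\R$: the first lies in $\partial X$, the middle lies over the simplicial $1$-handle $V_0 \times D_\varepsilon T^\ast\varDelta^1$, and the last lies in the negative end $\R \times (V_0 \times \R)$ of $X_{V_0}$. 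The first step is to run the argument of \cref{lma:one-to-one-corr_reeb_chords} verbatim in this situation (the local picture near the handle is the same as for $\varSigma(h)$, with $\varLambda$ playing the role of $\ell$): given an action bound $\mathfrak a > 0$ there is $\delta > 0$ so that, after an arbitrarily small perturbation of $V_0$ and for handle size $\varepsilon < \delta$, the generators of $CE^\ast(\varSigma_\varLambda; X_{V_0})$ of action $<\mathfrak a$ split into two families: those in grading-preserving bijection with Reeb chords of $\varLambda \subset \partial X_0$, and those in grading-preserving bijection with Reeb chords of $\partial\varLambda \subset \partial V_0$, the latter concentrated over the end of the handle near the piece $\{0\}\times\partial\varLambda\times\R$.

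Let $\mathcal B \subset CE^\ast(\varLambda,V;X)$ be the subalgebra generated by the second family. The heart of the matter is to show that $\mathcal B$ is a dg-subalgebra, i.e.\@ that every rigid $J$-holomorphic disk contributing to the differential of a generator in the second family has all of its negative punctures in the second family as well. This follows from a confinement statement for such disks: by the SFT-compactness and simplicial-curve analysis (as in \cite[Lemma 2.26 and Corollary 2.28]{asplund2021simplicial}, cf.\@ the corresponding lemmas of \cite{asplund2020chekanov}), for $\varepsilon$ small any such disk projects to the origin in the $D_\varepsilon T^\ast\varDelta^1$-factor of the handle and is contained in the sub-cobordism modeled on $V_0$; in particular it cannot reach the part of $\varSigma_\varLambda$ lying in $\partial X_0$, so a negative puncture at a first-family chord is impossible, and the differential restricts to $\mathcal B$.

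The same confinement identifies, below action $\mathfrak a$, the moduli spaces of rigid disks relevant to $\mathcal B$ with those defining $CE^\ast(\partial\varLambda; V_0)$, which yields a dg-algebra map $\mathcal B \to CE^\ast(\partial\varLambda; V_0)$ that is a bijection on generators and on rigid disks of action $<\mathfrak a$, hence an isomorphism in that action window. Letting $\mathfrak a \to \infty$ while shrinking $\varepsilon$ accordingly, exactly as in the proofs of \cref{lma:dg-subalgebra} and \cref{thm:loop_space_dga}, produces the quasi-isomorphism $\mathcal B \xrightarrow{\;\sim\;} CE^\ast(\partial\varLambda; V_0)$; it is canonical because it is induced purely by the geometric correspondence of Reeb chords and moduli spaces and depends on no data beyond what has already been fixed. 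The main obstacle is precisely the closure of $\mathcal B$ under the differential, i.e.\@ the holomorphic-curve confinement estimate; but this is essentially identical to the non-relative case already established in \cite{asplund2020chekanov}, the only new feature being the extra boundary component $\partial V$, which plays no role since all relevant curves stay away from it.
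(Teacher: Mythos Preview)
Your proposal is correct and follows the expected approach. Note that the paper does not actually supply a proof of this lemma: it is stated with a direct citation to \cite[Lemma~6.1]{asplund2020chekanov}, just as \cref{lma:dg-subalgebra} is cited from \cite[Corollary~2.11]{asplund2020chekanov}. Your argument is a faithful reconstruction of that cited proof---it runs the Reeb-chord dichotomy of \cref{lma:one-to-one-corr_reeb_chords} and the holomorphic-curve confinement argument in the relative setting, exactly parallel to the non-relative case---so there is nothing to compare beyond noting that you have spelled out what the paper leaves as a reference.
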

		\begin{proof}
			This is repetition of the proof of \cref{lma:dg-subalgebra}.
		\end{proof}
		\begin{dfn}[{\cite[Definition 4.1]{asplund2021simplicial}}]
			A Legendrian submanifold \emph{relative to a simplicial decomposition} is a collection $\boldsymbol \varLambda$ consisting of one embedded Legendrian $(n-k-1)$-submanifold-with-boundary-and-corners $\varLambda_{\sigma_k} \subset \partial V^{(k)}_{\sigma_k}$ for each $\sigma_k\in C$, such that 
			\[
				\partial \varLambda_{\sigma_k} = \# \boldsymbol \varLambda_{\supsetneq \sigma_k} \subset \partial \# \boldsymbol V_{\supsetneq \sigma_k}\, ,
			\]
			where $\boldsymbol \varLambda_{\supsetneq \sigma_k} := \left\{\varLambda_{\sigma_i}\right\}_{\sigma_i \supsetneq \sigma_k}$, and $\# \boldsymbol \varLambda_{\supsetneq \sigma_k}$ is defined recursively in analogy to the Weinstein manifold $\# \boldsymbol V_{\supsetneq \sigma_k}$, see \cite[Section 2.2]{asplund2021simplicial}. 
		\end{dfn}
		We now extend these definitions to simplicial decompositions of Weinstein pairs.
		\begin{dfn}
			A Legendrian submanifold \emph{relative to a simplicial decomposition of a Weinstein pair} $(X,P)$ consists of a Legendrian submanifold $\boldsymbol \varLambda'$ relative to the given simplicial decomposition of $P$ and a collection $\boldsymbol \varLambda$ consisting of one embedded Legendrian $(n-k-1)$-submanifold-with-boundary-and-corners $\varLambda_{\sigma_k} \subset \partial V_{\sigma_k}^{(k)}$ for each $\sigma_k'\in C'$ such that
			\[
				\partial \varLambda_{\sigma_k} = \# (\boldsymbol \varLambda'_{\supset \sigma_k'} \sqcup_i \boldsymbol \varLambda_{\supsetneq \sigma_k}) \subset \partial \#(\boldsymbol V'_{\supset \sigma_k'} \sqcup_i \boldsymbol V_{\supsetneq \sigma_k})\, ,
			\]
			where
			$\boldsymbol \varLambda'_{\supset \sigma_k'} \sqcup_i \boldsymbol \varLambda_{\supsetneq \sigma_k} := \boldsymbol \varLambda'_{\supset \sigma_k'} \cup \boldsymbol \varLambda_{\supsetneq \sigma_{k}}$.
		\end{dfn}
		\begin{figure}[!htb]
			\centering
			\includegraphics[scale=0.9]{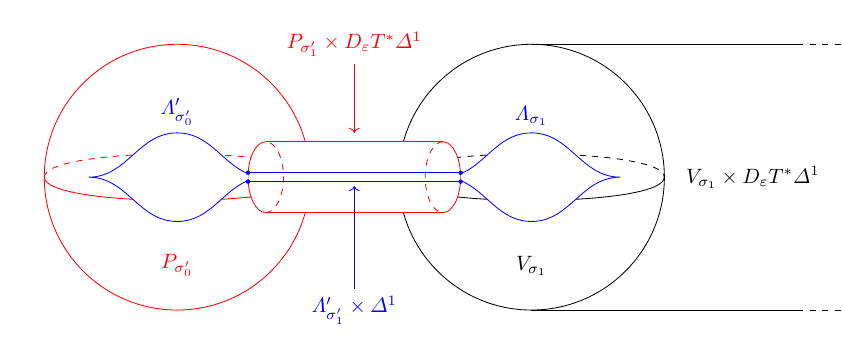}
			\caption{An example of a Legendrian submanifold relative to a simplicial decomposition with $C' = \varDelta^1$. The ambient manifold in view is $V_{\sigma_0}$ and the Legendrian submanifold $\partial \varLambda_{\sigma_0} = \varLambda'_{\sigma_0'} \#_{\varLambda'_{\sigma_1'}} \varLambda_{\sigma_1}$ which is shown in the figure.}\label{fig:rel_leg}
		\end{figure}
		\begin{rmk}
			We point out that Legendrian submanifolds-with-boundary was also studied from the point of view of sutured contact manifolds by Dattin \cite{dattin2022wrapped}, which is expected to generalize the results in this paper. From the point of view of sutured contact manifolds, Weinstein pairs $(X,V)$ give rise to a \emph{balanced} sutured contact manifold, and in that special case, we expect that our definition of $CE^\ast(\varLambda, V;X)$ coincides with the definition in \cite{dattin2022wrapped}.
		\end{rmk}
		\begin{dfn}\label{dfn:completion_rel_leg}
			Let $(\boldsymbol \varLambda, \boldsymbol \varLambda')$ be a Legendrian submanifold relative to a simplicial decomposition of a Weinstein pair $(X,P)$. Extending the various Legendrian submanifolds-with-boundary-and-corners in $\boldsymbol \varLambda$ over the simplicial handles used to construct $X$ (cf.\@ \cite[Definition 4.2]{asplund2021simplicial}) yields a Legendrian submanifold $\overline \varLambda \subset \partial X$ relative to $P$. We call $\overline \varLambda$ the \emph{$\boldsymbol V$-completion} of $\boldsymbol \varLambda$.
		\end{dfn}
		\begin{dfn}
			Let $((C,C'), (\boldsymbol V, \boldsymbol V'))$ be a simplicial decomposition of the Weinstein pair $(X,P)$ and let $\boldsymbol \varLambda$ be a Legendrian submanifold relative to $((C,C'), (\boldsymbol V, \boldsymbol V'))$. Define
			\[
				CE^\ast(\boldsymbol \varLambda; (X,P)) := CE^\ast(\overline \varLambda, P; X),
			\]
			where $\overline \varLambda$ is the $\boldsymbol V$-completion of $\boldsymbol \varLambda$.
		\end{dfn}
		We now turn to the analogous results for Legendrian submanifolds relative to simplicial decompositions of Weinstein pairs. Let $\varSigma(\boldsymbol h)$ denote the union of top attaching spheres of the simplicial handle $P \times D_\varepsilon T^\ast \varDelta^1$ used to construct $X_P$.

		We use the notation $X \to Y \to Z$ to mean a word of Reeb chords $vw$ where $w$ is a Reeb chord from $X$ to $Y$, and $w$ is a Reeb chord from $Y$ to $Z$. With the same notation as in \cite[Section 4.1]{asplund2021simplicial} we have the following.

		\begin{lma}\label{lma:surgery_desc_generators_relative}
			Assume $((C, C'),(\boldsymbol V, \boldsymbol V'))$ is a simplicial decomposition of $(X,P)$. For all $\mathfrak a > 0$ there exists some $\delta > 0$ and an arbitrarily small perturbation of the Weinstein hypersurface $P \hookrightarrow \partial X$ such that for all $0 < \boldsymbol \varepsilon < \delta$, the generators of $CE^\ast(\boldsymbol \varLambda;(X,P))$ are in one-to-one correspondence with composable words of Reeb chords of action $< \mathfrak a$ which are of the form $\boldsymbol \varLambda \to \boldsymbol \varLambda$ or
			\[
				\boldsymbol \varLambda \to \varSigma(\boldsymbol h) \to \cdots \to \varSigma(\boldsymbol h) \to \boldsymbol \varLambda.
			\]
			The differential on $CE^\ast(\boldsymbol \varLambda; (X, P))$ is induced by the differential in the Chekanov--Eliashberg dg-algebra of $\varSigma_{\varLambda} \cup \varSigma(\boldsymbol h) \subset \partial (X_{P_0})$.
		\end{lma}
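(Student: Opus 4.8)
The plan is to treat this as the relative analogue, for Weinstein pairs, of the surgery description of generators of the Chekanov--Eliashberg dg-algebra of a Legendrian relative to a simplicial decomposition from \cite[Section 4.1]{asplund2021simplicial}. First I would unpack the definition: by \cref{dfn:ce_of_rel_leg} we have $CE^\ast(\boldsymbol\varLambda;(X,P)) = CE^\ast(\overline\varLambda, P; X) = CE^\ast(\varSigma_{\overline\varLambda}; X_{P_0})$, where $\overline\varLambda$ is the $\boldsymbol V$-completion of $\boldsymbol\varLambda$ and $X_{P_0}$ is obtained from $X \sqcup (\R \times (P \times \R))$ by attaching the simplicial handle $P_0 \times D_\varepsilon T^\ast\varDelta^1$. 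Hence the generators are the Reeb chords of $\varSigma_{\overline\varLambda} \subset \partial X_{P_0}$ of action $< \mathfrak a$, after a generic arbitrarily small perturbation of $P$. Since $(C',\boldsymbol V')$ is a simplicial decomposition of $P$, the attached handle is assembled from the building blocks of $\boldsymbol V'$, so \cref{lma:top_attaaching_spheres_stopped_manifold} (with $\ell$ the union of core disks of the top handles in $\boldsymbol h'$) identifies the union of its top attaching spheres with $\varSigma(\boldsymbol h) = \varSigma_{X_P}(\boldsymbol h')$ from \eqref{eq:att_spheres_hypersurface}.

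With this setup I would apply \cref{lma:one-to-one-corr_reeb_chords} together with the simplicial refinements \cref{lma:one_to_one_corr_reeb_chords_core_disks,lma:one_to_one_corr_generators}: for $\boldsymbol\varepsilon$ small enough and action $<\mathfrak a$, every Reeb chord of $\varSigma_{\overline\varLambda}$ either lies in the part of $\partial X_{P_0}$ away from the $P$-handle, where it is a Reeb chord of $\overline\varLambda \subset \partial X_0$ and hence a word of the form $\boldsymbol\varLambda \to \boldsymbol\varLambda$, or it enters the simplicial handle, where the local model of the handle (cf.\@ \cite[Lemma 2.24]{asplund2021simplicial}) together with the above generator correspondences forces it to be recorded by a composable word ping-ponging between $\varSigma_{\overline\varLambda}$ and the core disks $\varSigma(\boldsymbol h)$, i.e.\@ a word $\boldsymbol\varLambda \to \varSigma(\boldsymbol h) \to \cdots \to \varSigma(\boldsymbol h) \to \boldsymbol\varLambda$; conversely each such composable word arises this way. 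This yields the claimed bijection of generators.

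For the statement about the differential, the plan is to run a neck-stretching argument along the thin handle $P_0 \times D_\varepsilon T^\ast\varDelta^1$: by SFT compactness a rigid $J$-holomorphic disk in $X_{P_0}$ with boundary on $\varSigma_{\overline\varLambda}$ degenerates to a simplicial pseudoholomorphic building in the sense of \cite[Definition 2.27]{asplund2021simplicial}, each component of which projects to the origin (or to a stable manifold of the origin) in the relevant $D_\varepsilon T^\ast\varDelta^1$-factor and therefore has boundary on $\varSigma_{\overline\varLambda} \cup \varSigma(\boldsymbol h) \subset \partial (X_P)_0$; a standard gluing argument supplies the converse. Matching the curve counts is then identical to \cite[Lemma 2.26 and Corollary 2.28]{asplund2021simplicial} and to the proof of \cref{thm:gluing_formula_hypersurface_dga}, so that the differential on $CE^\ast(\boldsymbol\varLambda;(X,P))$ is the one induced from the Chekanov--Eliashberg dg-algebra of $\varSigma_{\overline\varLambda} \cup \varSigma(\boldsymbol h)$.

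The step I expect to be the main obstacle is the behaviour near $\partial P$: one must control the Reeb chords and $J$-holomorphic curves meeting the part of $\varSigma_{\overline\varLambda}$ lying over $\partial\overline\varLambda \subset \partial P$, keeping the analysis on $X$ and on $P$ compatible through the corner locus. This is exactly the compatibility that a simplicial decomposition of the \emph{pair} $(X,P)$ is designed to provide, but making the compactness and gluing uniform in $\boldsymbol\varepsilon$ in the presence of this extra boundary is the delicate point; the remainder is a faithful repetition of the arguments of \cite{asplund2020chekanov,asplund2021simplicial}.
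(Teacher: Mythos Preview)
Your proposal misses the key mechanism. The paper's proof is a one-line appeal to the Bourgeois--Ekholm--Eliashberg surgery isomorphism \cite[Theorem 5.10]{bourgeois2012effect} together with its simplicial analogue \cite[Lemma 4.5]{asplund2021simplicial}: one compares the generators of $CE^\ast(\varSigma_{\overline\varLambda};X_{P_0})$ before and after attaching the critical handles along $\varSigma(\boldsymbol h)$, and the surgery formula is precisely the statement that a Reeb chord after surgery corresponds to a composable word $\boldsymbol\varLambda \to \varSigma(\boldsymbol h) \to \cdots \to \varSigma(\boldsymbol h) \to \boldsymbol\varLambda$ before surgery. The differential statement is part of the same package.

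Your argument instead tries to see the words directly in the Reeb dynamics of $\partial X_{P_0}$: you say a Reeb chord of $\varSigma_{\overline\varLambda}$ that enters the handle is ``recorded by a composable word ping-ponging between $\varSigma_{\overline\varLambda}$ and the core disks $\varSigma(\boldsymbol h)$''. This is not how the correspondence works. A Reeb chord of $\varSigma_{\overline\varLambda}$ in $\partial X_{P_0}$ is a single chord with both endpoints on $\varSigma_{\overline\varLambda}$; it never touches $\varSigma(\boldsymbol h)$, which is a \emph{disjoint} Legendrian in the same contact boundary. The lemmas you cite (\cref{lma:one-to-one-corr_reeb_chords}, \cref{lma:one_to_one_corr_reeb_chords_core_disks}, \cref{lma:one_to_one_corr_generators}, \cite[Lemma 2.24]{asplund2021simplicial}) only localize Reeb chords to the central fibers of the simplicial handles and give one-to-one correspondences between \emph{single} chords in various models; none of them produces words. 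The passage from single chords to composable words is exactly the non-trivial content of the surgery formula, and there is no way to obtain it from the handle model alone. Likewise, your neck-stretching/SFT-building argument for the differential is reinventing (a piece of) the proof of \cite[Theorem 5.10]{bourgeois2012effect} rather than invoking it; the paper simply cites the surgery description, which already identifies the differential with the one induced from $CE^\ast(\varSigma_{\overline\varLambda}\cup\varSigma(\boldsymbol h);(X_P)_0)$.
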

		\begin{proof}
			The proof follows directly from the surgery description comparing generators before and after attachment of critical handles, see \cite[Lemma 4.5]{asplund2021simplicial} and \cite[Theorem 5.10]{bourgeois2012effect}.
		\end{proof}

		\begin{thm}\label{thm:gluing_relative_legendrians}
			Suppose $\boldsymbol \varLambda$ is a Legendrian submanifold relative to the simplicial decomposition $((C,C'), (\boldsymbol V, \boldsymbol V'))$ of the Weinstein pair $(X,P)$. Suppose that there are no Reeb chords from $\overline \varLambda$ to $\varSigma(\boldsymbol h)$ or that there are no Reeb chords from $\varSigma(\boldsymbol h)$ to $\overline \varLambda$. Then there is a quasi-isomorphism of dg-algebras
			\[
				CE^\ast(\boldsymbol \varLambda; (X,P)) \cong \colim_{\sigma_k\in C} CE^\ast(\boldsymbol \varLambda; (X(\sigma_k),P(\sigma_k'))).
			\]
		\end{thm}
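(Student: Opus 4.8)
The plan is to combine the generator-level bookkeeping of \cref{lma:surgery_desc_generators_relative} with the colimit comparison already established for the absolute case in \cref{thm:gluing_formula_hypersurface_dga}(2), paying attention to the extra Legendrian $\overline{\varLambda}$ which plays the role of a marked point on each building block. First I would unwind the definitions: $CE^\ast(\boldsymbol\varLambda;(X,P)) = CE^\ast(\varSigma_\varLambda\cup\varSigma_{X_P}(\boldsymbol h');(X_P)_0)$ by \cref{dfn:ce_of_rel_leg}, and similarly $CE^\ast(\boldsymbol\varLambda;(X(\sigma_k),P(\sigma_k')))$ is the Chekanov--Eliashberg dg-algebra of $\varSigma_{\varLambda,\supset\sigma_k'}\cup\varSigma_{X_P,\supset\sigma_k'}(\boldsymbol h')$ over the stopped manifold $X(\sigma_k')_{P(\sigma_k')_0}$. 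So the statement to prove is a colimit formula for these Chekanov--Eliashberg dg-algebras indexed over the faces $\sigma_k'\in C'$.

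Next I would set up the comparison at the level of generators. Fix an action bound $\mathfrak a$ and take $\delta$ small as supplied by \cref{lma:one_to_one_corr_generators} and \cref{lma:surgery_desc_generators_relative}. By \cref{lma:surgery_desc_generators_relative} the generators of $CE^\ast(\boldsymbol\varLambda;(X,P))$ of action $<\mathfrak a$ are the composable words $\boldsymbol\varLambda\to\boldsymbol\varLambda$ and $\boldsymbol\varLambda\to\varSigma(\boldsymbol h)\to\cdots\to\varSigma(\boldsymbol h)\to\boldsymbol\varLambda$; the hypothesis that there are no Reeb chords $\overline\varLambda\to\varSigma(\boldsymbol h)$ or none $\varSigma(\boldsymbol h)\to\overline\varLambda$ guarantees that such words are in fact finite in length (they cannot alternate indefinitely), so this generating set is genuinely a set of Reeb chords of the link $\varSigma_\varLambda\cup\varSigma_{X_P}(\boldsymbol h')$. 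For each face $\sigma_k'\in C'$ I would define, exactly as in the proof of \cref{thm:gluing_formula_hypersurface_dga}, a dg-subalgebra $\mathcal A_{\sigma_k'}$ generated by those words all of whose $\varSigma(\boldsymbol h)$-letters lie over faces $\sigma_i'\supset\sigma_k'$, with differential restricted from $CE^\ast(\boldsymbol\varLambda;(X,P))$; then invoke \cref{lma:one_to_one_corr_generators} together with \cite[Lemma 2.37]{asplund2021simplicial} to identify $\mathcal A_{\sigma_k'}$ with $CE^\ast(\boldsymbol\varLambda;(X(\sigma_k),P(\sigma_k')))$. The inclusions $\mathcal A_{\sigma_k'}\subset\mathcal A_{\sigma_j'}$ for $\sigma_j'\subset\sigma_k'$ make $\sigma_k'\mapsto\mathcal A_{\sigma_k'}$ a functor on $C'$, and since every dg-algebra here is semi-free (\cref{rmk:category}), its colimit in $\mathbf{dga}$ is the semi-free dg-algebra on the union of all the generators --- which is precisely $CE^\ast(\boldsymbol\varLambda;(X,P))$ by the generator count. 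Finally I would let $\mathfrak a\to\infty$, noting that the quasi-isomorphisms are compatible with the action filtration so the colimit formula passes to the limit.

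The main obstacle, as in the absolute case, is the differential: one must show that the differential of $CE^\ast(\boldsymbol\varLambda;(X,P))$ genuinely restricts to each $\mathcal A_{\sigma_k'}$ and agrees with the differential computed in the stopped manifold $X(\sigma_k')_{P(\sigma_k')_0}$. This is where the hypothesis on Reeb chords between $\overline\varLambda$ and $\varSigma(\boldsymbol h)$ is essential: it forces every rigid $J$-holomorphic disk with positive puncture at a generator of $\mathcal A_{\sigma_k'}$ to be \emph{simplicial} in the sense of \cite[Definition 2.27]{asplund2021simplicial}, so that it projects to the origin (or to stable manifolds of origins of higher building blocks $V^{(\ell)}_{\sigma_\ell}\times T^\ast\varDelta^\ell$ with $\sigma_\ell\supset\sigma_k$) and hence never sees the loci that were stopped off in passing to $X(\sigma_k')$. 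The one-to-one correspondence of such disks before and after stopping is then exactly \cite[Lemma 2.26 and Corollary 2.28]{asplund2021simplicial} applied in the presence of the extra Legendrian $\overline\varLambda$; the bookkeeping is more involved than in \cref{thm:gluing_formula_hypersurface_dga} only because words now begin and end on $\boldsymbol\varLambda$ rather than on $\varSigma(\boldsymbol h)$, but no new analytic input is required.
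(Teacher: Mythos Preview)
Your overall strategy --- define local dg-subalgebras $\mathcal A_{\sigma_k'}$, match generators and differentials via \cite[Lemma 2.26, Corollary 2.28, Lemma 2.37]{asplund2021simplicial}, and identify the colimit with the global dg-algebra --- is exactly the approach the paper has in mind when it says ``repeat the idea of \cite[Theorem 2.33]{asplund2021simplicial}''. So the architecture is fine.

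However, you have misread the role of the Reeb-chord hypothesis, and this matters. The generators from \cref{lma:surgery_desc_generators_relative} are \emph{already} finite composable words; there is nothing to say about ``finite length'' or words that ``alternate indefinitely''. The actual consequence of the hypothesis is much stronger: a word of the form $\boldsymbol\varLambda\to\varSigma(\boldsymbol h)\to\cdots\to\varSigma(\boldsymbol h)\to\boldsymbol\varLambda$ requires both a chord $\overline\varLambda\to\varSigma(\boldsymbol h)$ (to start) and a chord $\varSigma(\boldsymbol h)\to\overline\varLambda$ (to end), so if either class is empty such words simply do not exist. Under the hypothesis the generating set therefore collapses to pure Reeb chords $\boldsymbol\varLambda\to\boldsymbol\varLambda$ in $\partial X_{P_0}$. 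This is precisely what the paper's one-line proof records, and it is why the subsequent colimit argument reduces to the one already carried out in the absolute case: there are no $\varSigma(\boldsymbol h)$-letters to localize, and your $\mathcal A_{\sigma_k'}$ should be defined purely in terms of $\boldsymbol\varLambda\to\boldsymbol\varLambda$ chords lying over faces $\sigma_i'\supset\sigma_k'$.

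Relatedly, your last paragraph misattributes the simpliciality of holomorphic disks to the Reeb-chord hypothesis. Simpliciality is a consequence of the geometry of the simplicial handles (projection to the $T^\ast\varDelta^k$-factor), not of the vanishing of mixed chords; the hypothesis is used only at the generator level, to strip $\varSigma(\boldsymbol h)$ out of the picture before the disk-counting argument begins.
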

		\begin{proof}
			By the surgery description of the generators in \cref{lma:surgery_desc_generators_relative}, the only Reeb chords of $\boldsymbol \varLambda$ of action $< \mathfrak a$ are $\boldsymbol \varLambda \to \boldsymbol \varLambda$ in $\partial (X_{P})_0$. The proof now follows by following the same strategy as \cref{thm:gluing_formula_hypersurface_dga}.
		\end{proof}

		For the rest of this section we restrict our attention to $C = C' = \varDelta^1$. Consider the case when $\varLambda$ is a smooth Legendrian submanifold in the boundary of $(X,P) := (X_1,P_1) \#_V (X_2,P_2)$ where $P_i = W_i \#_{Q_i} V$ for $i \in \left\{1,2\right\}$. We assume for simplicity that $W_i$ and $Q_i$ are subcritical. Consider a Legendrian submanifold $\boldsymbol \varLambda = \{\varLambda_1,\varLambda_2,\partial \varLambda_1, \partial \varLambda_2\}$ in $\partial X$ relative to $P$ and denote its completion by $\varLambda = \varLambda_1 \#_{\partial \varLambda_V} \varLambda_2$ where $\partial \varLambda_V$ is the common boundary of $\partial \varLambda_1$ and $\partial \varLambda_2$ along $V$ in $P_1$ and $P_2$, respectively. Here $\boldsymbol \varLambda_i = \{\varLambda_i, \partial \varLambda_i\}$ is a Legendrian submanifold in $\partial X_i$ relative to $P_i$. Let $\varSigma(h) = \ell_V \sqcup (\partial \ell_V \times \varDelta^1) \sqcup \ell_V$ be the top attaching sphere of the basic building block $V \times T^\ast \varDelta^1$ used to construct $X$, where $\ell_V$ is the union of top core disks of a handle decomposition of $V$ (and hence of each $P_i$ by the subcriticality assumption on $W_i$ and $Q_i$). 

		Let $\mathbb F$ be a field and let $\boldsymbol k := \bigoplus_{i \in \pi_0(\varLambda)} \mathbb F e_i$ where $\left\{e_i\right\}_{i\in \pi_0(\varLambda)}$ is a set of mutually orthogonal idempotents. We now give an alternative description of the dg-algebra $CE^\ast(\boldsymbol \varLambda;(X,P))$ as a $\boldsymbol k$-module. To that end let $\mathcal A_{i \to V}$ ($\mathcal A_{V\to i}$) denote the right (left) $CE^\ast((V,h);X_i)$-module freely generated by the Reeb chords from $\varLambda_i$ to $\ell_V$ and $\partial \varLambda_V$ to $\partial \ell_V$ ($\ell_V$ to $\varLambda_i$ and $\partial \ell_V$ to $\partial \varLambda_V$) in $\partial X_i$. Similarly we let $\mathcal A_{i \to i}$ denote the algebra generated by Reeb chords from $\varLambda_i$ to $\varLambda_i$ in $\partial X_i$ and $\partial \varLambda_i$ to $\partial \varLambda_i$ in $\partial P_i$. Let $(\mathcal A_V)_i$ denote the set of Reeb chords of $\ell_V$ in $\partial X_i$ and the set of Reeb chords of $\partial \ell_V \subset \partial V_0$. Also define $\mathcal A_V := (\mathcal A_V)_1 \cup (\mathcal A_V)_2$. Let $\mathcal A_{i\to V}$ ($\mathcal A_{V\to j}$) be the right (left) $\boldsymbol k$-module over the free algebra on $\mathcal A_V$ generated by the Reeb chords from $\varLambda_i$ to $\ell_V$ ($\ell_V$ to $\varLambda_j$). Then define
		\begin{align*}
			\mathcal A_{i\to V \to i} &:= \mathcal A_{i\to V} \otimes_{(\mathcal A_V)_i} \mathcal A_{V\to i}, \quad i \in \left\{1,2\right\} \\
			\mathcal A_{i\to V \to j} &:= \overline{\mathcal A}_{i\to V} \otimes_{\mathcal A_V} \overline{\mathcal A}_{V\to j}, \quad i \neq j \in \left\{1,2\right\}.
		\end{align*}
		In the following, we let $A \ast B$ and $A \ast_C B$ denote the free product and amalgamated free product of two algebras $A$ and $B$ (over $C$), respectively. For $\mathcal S$ a set, we let $\left\langle \mathcal S\right\rangle$ denote the free associative non-commutative algebra generated by $\mathcal S$. Let $A_1,\ldots,A_k$ be sets such that $A_i \cap A_j = \varnothing$ if $j \neq i+1$. Then we have a canonical isomorphism of free algebras
		\begin{equation}\label{eq:amalg_free_prod}
			\left\langle A_1 \cup \cdots \cup A_k\right\rangle \cong \left\langle A_1\right\rangle \ast_{A_1 \cap A_2} \left\langle A_2\right\rangle \ast_{A_2 \cap A_3} \cdots \ast_{A_{k-1} \cap A_k} \left\langle A_k\right\rangle.
		\end{equation}
		\begin{lma}\label{lma:quasi_iso_free_products}
			Fix a generic choice of metric and almost complex structure.
			\begin{enumerate}
				\item Let $i\in \left\{1,2\right\}$. The free algebra $\left\langle \mathcal A_{i\to i}\right\rangle \ast \left\langle \mathcal A_{i\to V \to i}\right\rangle$ becomes a dg-algebra when equipped with the differential of $CE^\ast(\varLambda_i,P_i;X_i)$ and there is a quasi-isomorphism of dg-algebras
				\[
					CE^\ast(\varLambda_i,P_i;X_i) \cong \left\langle \mathcal A_{i\to i}\right\rangle \ast \left\langle \mathcal A_{i\to V \to i}\right\rangle.
				\]
				\item The free algebra $\mathcal D := \left\langle (\mathcal A_{1\to 1}\cup \mathcal A_{1\to V \to 1}) \cap (\mathcal A_{2\to 2} \cup \mathcal A_{2\to V \to 2})\right\rangle$ becomes a dg-algebra when equipped with the differential of $CE^\ast(\partial \varLambda_V;V)$ and there is a quasi-isomorphism of dg-algebras
				\[
					\mathcal D \cong CE^\ast(\partial \varLambda_V;V).
				\]
				\item The free algebra
				\[
					\mathcal C := CE^\ast(\varLambda_1,P_1;X_1) \ast_{CE^\ast(\partial \varLambda_V;V)} CE^\ast(\varLambda_2,P_2;X_2) \ast \left\langle \mathcal A_{1\to V \to 2}\right\rangle \ast \left\langle \mathcal A_{2\to V \to 1}\right\rangle
				\]
				becomes a dg-algebra when equipped with the differential of $CE^\ast(\varLambda_i,P_i;X_i)$ such that $CE^\ast(\varLambda_1,P_1;X_1), CE^\ast(\varLambda_2,P_2;X_2) \subset \mathcal C$ are dg-subalgebras. Moreover there is a quasi-isomorphism of dg-algebras
					\[
						\mathcal C \cong CE^\ast(\varLambda,P;X).
					\]
			\end{enumerate}
		\end{lma}
		\begin{proof}
			\begin{enumerate}
				\item The claim that $\left\langle \mathcal A_{i\to i}\right\rangle \ast \left\langle \mathcal A_{i\to V \to i}\right\rangle$ becomes a dg-algebra when equipped with the differential of $CE^\ast(\varLambda_i,P_i;X_i)$ follows by considering the filtered dg-algebras by introducing an action bound like in the proof of \cref{thm:gluing_formula_hypersurface_dga}. With a given action bound on generators and for sufficiently thin handles as there, we get a bijection of the generator sets and hence we can equip both algebras with the differential of $CE^\ast(\varLambda_i,P_i;X_i)$. By taking the colimit as the action bound goes to $\infty$ and by invariance of Chekanov--Eliashberg dg-algebras like in the proof of \cref{thm:gluing_formula_hypersurface_dga}, the result then follows by \eqref{eq:amalg_free_prod}.
				\item This is similar to item (1) above. The set $(\mathcal A_{1\to 1}\cup \mathcal A_{1\to V \to 1}) \cap (\mathcal A_{2\to 2} \cup \mathcal A_{2\to V \to 2})$ precisely consists of words of Reeb chords $\partial \varLambda_V \to \partial \ell_V \to \cdots \to \partial \ell_V \to \partial \varLambda_V$ which corresponds to the generators of $CE^\ast(\partial \varLambda_V;V)$ below a certain action bound, see \cref{lma:surgery_desc_generators_relative} and \cite[Proposition 6.3]{asplund2020chekanov}.
				\item This is also similar to item (1) above. The extra thing we need to be mindful of is to ensure that $CE^\ast(\varLambda_1,P_1;X_1)$ and $CE^\ast(\varLambda_2,P_2;X_2)$ become dg-subalgebras of $\mathcal C$. This however, follows from \cite[Lemma 2.26]{asplund2021simplicial} (see also \cite[Lemma 3.5]{asplund2020chekanov}). Namely, the differential not have input at a generator of $CE^\ast(\varLambda_1,P_1;X_1)$ and a generator of $CE^\ast(\varLambda_2,P_2;X_2)$ as output (or vice versa), which means that both $CE^\ast(\varLambda_1,P_1;X_1)$ and $CE^\ast(\varLambda_2,P_2;X_2)$ become dg-subalgebras of $\mathcal C$. Then the result follows by \eqref{eq:amalg_free_prod}.
			\end{enumerate}
		\end{proof}
		\begin{rmk}\label{rmk:quasi_iso_free_prods}
			\begin{enumerate}
				\item Note that neither $\left\langle \mathcal A_{i\to i}\right\rangle$ nor $\left\langle \mathcal A_{i\to V \to i}\right\rangle$ becomes a dg-algebra when equipped with the differential of $CE^\ast(\varLambda_i,P_i;X_i)$, but their free algebra does. We do not know how to express $CE^\ast(\varLambda_i,P_i;X_i)$ as the colimit of \emph{dg-algebras} in general.

				Similarly note that neither $\left\langle \mathcal A_{1\to V \to 2}\right\rangle$ nor $\left\langle \mathcal A_{2\to V \to 1}\right\rangle$ is a dg-algebra when equipped with the differential from $CE^\ast(\varLambda,P;X)$. We also do not know how to express $CE^\ast(\varLambda,P;X)$ as the colimit of \emph{dg-algebras} in general.
				\item When $V$ is assumed to be subcritical we have $\mathcal A_{1\to V \to 2} = \mathcal A_{2\to V \to 1} = \varnothing$, and then \cref{lma:quasi_iso_free_products} gives a special case of \cref{thm:gluing_relative_legendrians} when $C = C' = \varDelta^1$. In this case it \emph{is} the case that $CE^\ast(\varLambda,P;X)$ is the colimit of dg-algebras.
				\item Additionally, when $P = \varnothing$, and $V$ is assumed to be subcritical, then \cref{lma:quasi_iso_free_products} recovers \cite[Theorem 1.3]{asplund2020chekanov} and a special case of \cite[Theorem 1.1]{asplund2021simplicial}.
			\end{enumerate}
		\end{rmk}
		\begin{rmk}\label{rmk:category_2}
			The free product and amalgamated free products used in \cref{lma:quasi_iso_free_products} are both special cases of colimits in the category $\mathbf{a}$ which consists of associative, non-commutative, non-unital algebras over varying non-unital rings. Namely, there is a forgetful functor $\mathbf{dga} \to \mathbf{a}$ by forgetting the differential and grading, which means that all colimits are preserved because forgetful functors are left adjoints. The free product is the coproduct in $\mathbf a$ and the amalgamated free product is the pushout in $\mathbf a$. Colimits of free algebras in this category are again free algebras, generated by the union of the generators.
		\end{rmk}
\section{Tangle contact homology and gluing formulas}\label{sec:tch_and_gluing_formulas}
	In this section we apply the results of \cref{sec:simplicial_decomps_for_pairs} to define tangle contact homology and obtain gluing results for knot contact homology.

	From now on and throughout the paper we let $\R^3_{x \geq 0} := \left\{(x,y,z) \in \R^3 \suchthat x\geq 0\right\}$.
	\subsection{Unframed knot contact homology}\label{sec:unframed_kch_for_tangles}
		In this section we first discuss the \emph{unframed} knot contact homology and how the results of \cite{asplund2020chekanov,asplund2021simplicial} yields gluing formulas for the unframed version of knot contact homology.
		\begin{dfn}[Tangle]
			An \emph{$r$-component tangle} in $\R^3_{x \geq 0}$ is a proper orientation preserving embedding 
			\[
				T \colon \left\{1,\ldots,r\right\} \times [-1,1] \longrightarrow \R^3_{x \geq 0}
			\]
			such that $T(k, \pm 1) \subset \partial \R^3_{x \geq 0}$ for all $k\in \left\{1,\ldots,r\right\}$. We call $\bigcup_{k=1}^r\{T(k,1),T(k,-1)\}$ the boundary of $T$ and denote it by $\partial T$.

			By abuse of notation we often also refer to the image of $T$ as an $r$-component tangle.
		\end{dfn}
		\begin{rmk}
			\begin{enumerate}
				\item In general we also allow components of a tangle to have empty boundary, but for notational simplicity we assume that each component of a tangle has boundary.
				\item We will always (even if it is not apparent in the notation) assume that tangles are oriented and framed submanifolds of $\R^3_{x\geq 0}$.
			\end{enumerate}
		\end{rmk}
		\begin{dfn}[Ambient isotopy]\label{dfn:equiv_of_tangles}
			We say that two tangles $T_1$ and $T_2$ are \emph{ambient isotopic}, denoted by $T_1 \simeq T_2$, if there exists an orientation preserving isotopy $h_t \colon \R^3_{x \geq 0} \longrightarrow \R^3_{x \geq 0}$ such that
			\begin{enumerate}
				\item $h_0 = \id$, and
				\item $\eval[0]{h_1}_{T_1} = T_2$.
			\end{enumerate}
		\end{dfn}
		\begin{dfn}[Trivial tangle]\label{dfn:trivial_tangle}
			We say that an $r$-component tangle $T$ in $\R^3_{\geq 0}$ is \emph{trivial} if it bounds $r$ disjoint embedded half-disks in $\R^3_{x\geq 0}$ whose boundary arcs belong to $\partial \R^3_{x \geq 0}$.
		\end{dfn}
		\begin{rmk}
			Be aware that our notion of a \emph{trivial tangle} may not be standard terminology. It is equivalent to there existing a projection in which the tangle diagram does not have any crossings.
		\end{rmk}
		\begin{asm}\label{asm:orthogonality_chords}
			\begin{enumerate}
				\item Tangles meet $\partial \R^3_{x \geq 0}$ orthogonally.
				\item The binormal geodesic chords of a tangle in $\R^3_{x\geq 0}$ are isolated.
				\item No binormal geodesic chord of a tangle in $\R^3_{x\geq 0}$ is contained in $\partial \R^3_{x\geq 0}$.
			\end{enumerate}
		\end{asm}
		\begin{rmk}
			Note that \cref{asm:orthogonality_chords} does not lose generality. It can always be achieved by small perturbations of the tangle and the given metric on $\R^3_{x\geq 0}$.
		\end{rmk}
		\begin{dfn}[Gluing of tangles]
			Let $T_1$ and $T_2$ be two oriented $r$-component tangles in $\R^3_{x \geq 0}$ such that $\partial T_1 = \partial T_2$. The gluing of $T_1$ and $T_2$ is defined to be the oriented link obtained by gluing the two copies of $\R^3_{x \geq 0}$ together along their common boundary by the identity map.
		\end{dfn}
		\begin{rmk}
			We will always assume that whenever we glue two tangles, their orientations are compatible so that the resulting link obtains an induced orientation.
		\end{rmk}

		\begin{dfn}[Knot contact homology following Ekholm--Etnyre--Ng--Sullivan]\label{dfn:kch_eens}
			The \emph{knot contact homology} of an $r$-component oriented link is defined as
			\[
				KCC^\ast_{\text{EENS}}(K) := CE^\ast(\varLambda_K, \boldsymbol k[\lambda_1^{\pm 1}, \mu_1^{\pm 1},\ldots, \lambda_r^{\pm 1}, \mu_r^{\pm 1}]; T^\ast \R^3),
			\]
			where $\lambda_1, \mu_1, \ldots, \lambda_r, \mu_r$ is a basis of $H_1(\varLambda_K)$ consisting of longitude classes and meridian classes, respectively.
		\end{dfn}
		\begin{rmk}
			We consider the \emph{fully non-commutative} version of knot contact homology in which the generators $\lambda_i^{\pm 1}$ and $\mu_i^{\pm 1}$ for $i\in \left\{1,\ldots,r\right\}$ in \cref{dfn:kch_eens} are declared to \emph{not} commute with Reeb chord generators.
		\end{rmk}

		\begin{dfn}[Unframed knot contact homology of a link]
			For an oriented link $K \subset \R^3$ we define 
			\[
				KCC_u^\ast(K) := CE^\ast(\varLambda_K;T^\ast \R^3) = \eval[0]{KCC^\ast_{\text{EENS}}(K)}_{\lambda_1 = \cdots \lambda_r = \mu_1 = \cdots \mu_r = 1}.
			\]
		\end{dfn}
		\begin{rmk}\label{rmk:unframed_framed}
			Note that the adjective ``unframed'' does not refer to the link $K$, but is referring to the coefficient ring. Even though we call $KCC^\ast_u$ the \emph{unframed} knot contact homology, $K$ is always assumed to be oriented, as we need the Legendrian submanifold $\varLambda_K$ to be oriented in order to consider signs in the differential of the Chekanov--Eliashberg dg-algebra.
		\end{rmk}
		Let $T$ be a tangle in $\R^3_{x \geq 0}$ and let $H := \partial \R^3_{x \geq 0}$. After convex completion of the (open) Weinstein sector $T^\ast \R^3_{x \geq 0}$ we obtain the Weinstein pair $(\widetilde{T^\ast \R^3_{x \geq 0}}, T^\ast H)$ \cite[Section 2.7]{ganatra2020covariantly}. Note that in our notation $\widetilde{T^\ast \R^3_{x \geq 0}}$ is still an open Weinstein sector. We convex complete along the horizontal boundary $\partial T^\ast \R^3_{x \geq 0} = T^\ast \eval[0]{\R^3}_{x=0}$ only.

		The unit conormal bundle of $T$ is a Legendrian submanifold $\varLambda_T \subset \partial_\infty \widetilde{T^\ast \R^3_{x \geq 0}}$ relative to $T^\ast H$ in the sense of \cref{dfn:relative_leg}.
		\begin{dfn}[Unframed tangle contact homology algebra]
			Let $T$ be an oriented tangle in $\R^3_{x \geq 0}$. We define the \emph{unframed tangle contact homology algebra} of $T$ as
			\[
				KCC_u^\ast(T) := CE^\ast(\varLambda_T, T^\ast H; \widetilde{T^\ast \R^3_{x \geq 0}}).
			\]
			The homology of the dg-algebra $KCC^\ast_{u}(T)$ is called \emph{unframed tangle contact homology}.
		\end{dfn}
		\begin{rmk}
			Note that the adjective \emph{unframed} does not refer to the framing or orientation of the tangle $T$ (cf.\@ \cref{rmk:unframed_framed}). Instead it refers to whether it is the Chekanov--Eliashberg dg-algebra of $\varLambda_T$ directly, or the Chekanov--Eliashberg dg-algebra of a cotangent neighborhood of $\varLambda_T$ with a choice of handle decomposition. The presence of such a handle decomposition can be interpreted as keeping track of framing data in the actual dg-algebra.
		\end{rmk}
		\begin{lma}\label{lma:subalgebra_kch_u}
			There is a dg-subalgebra of $KCC_u^\ast(T)$ which is canonically quasi-isomorphic to $KCC_u^\ast(\partial T)$.
		\end{lma}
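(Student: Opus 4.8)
The plan is to obtain this as a direct consequence of \cref{lma:subalgebra_of_relative_dga} together with a single geometric identification of the boundary Legendrian. Since $\varLambda_T \subset \partial_\infty \widetilde{T^\ast \R^3_{x\geq 0}}$ is a Legendrian submanifold relative to $T^\ast H$ in the sense of \cref{dfn:relative_leg}, the dg-algebra $KCC_u^\ast(T) = CE^\ast(\varLambda_T, T^\ast H; \widetilde{T^\ast \R^3_{x\geq 0}})$ is by \cref{dfn:ce_of_rel_leg} the Chekanov--Eliashberg dg-algebra of the union of top attaching spheres $\varSigma_{\varLambda_T} \subset \partial (\widetilde{T^\ast \R^3_{x\geq 0}})_{(T^\ast H)_0}$. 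Applying \cref{lma:subalgebra_of_relative_dga} with $X = \widetilde{T^\ast \R^3_{x\geq 0}}$, $V = T^\ast H$ and $\varLambda = \varLambda_T$ then yields a dg-subalgebra of $KCC_u^\ast(T)$ that is canonically quasi-isomorphic to $CE^\ast(\partial \varLambda_T;(T^\ast H)_0)$.

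It remains to identify $\partial \varLambda_T = \varLambda_T \cap T^\ast H$ with the unit conormal bundle $\varLambda_{\partial T} \subset \partial_\infty T^\ast H$ of the finite point set $\partial T \subset H$, so that $CE^\ast(\partial \varLambda_T;(T^\ast H)_0) = KCC_u^\ast(\partial T)$ by definition. This is precisely where \cref{asm:orthogonality_chords} enters: at a boundary point $p \in \partial T$ the tangent line $T_p T$ is orthogonal to $H$, so the fiber of the unit conormal bundle of $T$ over $p$ is the set of unit covectors annihilating $T_p T$, which is exactly the unit sphere in $T_p^\ast H$ --- that is, the fiber of $\varLambda_{\partial T}$ over $p$. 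Running over all of $\partial T$ gives $\partial \varLambda_T = \varLambda_{\partial T}$ as Legendrian submanifolds of $\partial_\infty T^\ast H$, and composing with the previous step proves the lemma.

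The only point that needs care --- and the main, albeit mild, obstacle --- is bookkeeping: one must check that the identification $\partial \varLambda_T = \varLambda_{\partial T}$ is compatible with the contact structures, the subscript-$0$ normalization of $T^\ast H$, and the gradings appearing in \cref{lma:subalgebra_of_relative_dga}. Concretely, I would verify that the convex completion of the Weinstein sector $T^\ast \R^3_{x\geq 0}$ performed only along its horizontal boundary $\partial T^\ast \R^3_{x \geq 0} = T^\ast \eval[0]{\R^3}_{x=0}$ leaves a standard collar of $T^\ast H$ in which $\varLambda_T$ is cylindrical, so that its germ near $\partial \varLambda_T$ is the one implicitly used in \cref{dfn:ce_of_rel_leg}; this is guaranteed by \cref{asm:orthogonality_chords} together with the standing assumption that $H$ is not totally geodesic. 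No new holomorphic curve analysis is required, since \cref{lma:subalgebra_of_relative_dga} is already available from \cite{asplund2020chekanov}; the content of the lemma is entirely the geometric identification of $\partial \varLambda_T$ with $\varLambda_{\partial T}$.
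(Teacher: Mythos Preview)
Your proof is correct and follows essentially the same approach as the paper: apply \cref{lma:subalgebra_of_relative_dga} with $X = \widetilde{T^\ast \R^3_{x\geq 0}}$, $V = T^\ast H$, $\varLambda = \varLambda_T$, and then identify $\partial \varLambda_T = \varLambda_{\partial T}$ in $ST^\ast H$. The paper's proof is terser --- it simply asserts $\partial \varLambda_T = \varLambda_{\partial T}$ without spelling out the role of orthogonality --- but your added justification and bookkeeping remarks are accurate and do not deviate from the intended argument.
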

		\begin{proof}
			By definition $KCC_u^\ast(T) = CE^\ast(\varLambda_T, T^\ast H; \widetilde{T^\ast \R^3_{x \geq 0}})$, and by \cref{lma:subalgebra_of_relative_dga} this dg-algebra contains a dg-subalgebra that is generated by Reeb chords of $\varSigma_{\varLambda_T}$ (see \cref{dfn:ce_of_rel_leg}) that corresponds to Reeb chords of $\partial \varLambda_T \subset ST^\ast H$. This dg-subalgebra is canonically quasi-isomorphic to $CE^\ast(\partial \varLambda_T, T^\ast H)$, and since $\partial \varLambda_T = \varLambda_{\partial T}$ in $ST^\ast H$ the result follows.
		\end{proof}
		\begin{lma}
			For generic choices of metrics and almost complex structures, the quasi-isomorphism class of the dg-algebra $KCC^\ast_u(T,h)$ is an ambient isotopy invariant of $T \subset \R^3_{x\geq 0}$.
		\end{lma}
		\begin{proof}
			Ambient isotopies of $T$ induces a Legendrian isotopy $\varLambda_T \subset \partial \widetilde{T^\ast \R^3_{x\geq 0}}$ such that its restriction to the boundary $\partial \varLambda_T \subset ST^\ast H$ also defines a Legendrian isotopy from which the result follows by the discussion in \cite[Remark 1.5]{asplund2020chekanov} (cf.\@ the proof of \cref{lma:kch_invariance}(3) below).
		\end{proof}
	\subsection{Knot contact homology for links}\label{sec:kch_links}
		In this section we extend the definition of the fully non-commutative knot contact homology to a family of such homologies, depending on a choice of a handle decomposition $h$ of a small cotangent neighborhood of the unit conormal torus in $ST^\ast \R^3$. For certain choices of $h$ this definition recovers the Ekholm--Etnyre--Ng--Sullivan fully non-commutative knot contact homology defined in \cite{ekholm2013knot,cieliebak2017knot,ekholm2018a}.

		\begin{dfn}[Knot contact homology for links]
			Let $K \subset \R^3$ be an oriented link. Define
			\[
				KCC^\ast(K,h) := CE^\ast((N(\varLambda_K),h);T^\ast \R^3),
			\]
			where $N(\varLambda_K) \cong D_\varepsilon T^\ast \varLambda_K$ is a small cotangent neighborhood of the unit conormal bundle of $K$, together with a chosen handle decomposition $h$.
		\end{dfn}
		In general we have that the quasi-isomorphism class of $KCC^\ast(K,h)$ depends on the handle decomposition $h$ as can be observed by \cite[Example 7.4 and Example 7.5]{asplund2020chekanov}.
		\begin{lma}\label{lma:kch_invariance}
			Let $h_1$ and $h_2$ be two handle decompositions of $N(\varLambda_K)$.
			\begin{enumerate}
				\item $KCC^\ast(K,h_1)$ and $KCC^\ast(K,h_2)$ are derived Morita equivalent.
				\item There is an isomorphism of algebras $HH_\ast(KCC^\ast(K,h_1)) \cong HH_\ast(KCC^\ast(K,h_2))$, where $HH_\ast$ denotes Hochschild homology.
				\item Denote the subcollection of top handles of $h_1$ and $h_2$ by $h^{\mathrm{top}}_1$ and $h^{\mathrm{top}}_2$ respectively. If the core disks in $h^{\mathrm{top}}_1$ and $h^{\mathrm{top}}_2$ are related by a Legendrian isotopy $h^{\mathrm{top}}_t$ in $ST^\ast \R^3$ with the property that $\partial h^{\mathrm{top}}_t$ is a Legendrian isotopy of $\partial h^{\mathrm{top}}_1$ and $\partial h^{\mathrm{top}}_2$ in $\partial N(\varLambda_K)_0$ then we have that $KCC^\ast(K,h_1)$ and $KCC^\ast(K,h_2)$ are dg-homotopy equivalent.
			\end{enumerate}
		\end{lma}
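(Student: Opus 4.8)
The plan is to deduce all three statements from the invariance of $CE^\ast((V,h);X)$ under a change of handle decomposition $h$, specialized to $V = N(\varLambda_K)$ and $X = T^\ast\R^3$, following the analysis of handle decompositions carried out in \cite{asplund2020chekanov}. By definition and \cref{lma:top_attaaching_spheres_stopped_manifold} we have $KCC^\ast(K,h) = CE^\ast(\varSigma(h);(T^\ast\R^3)_{N(\varLambda_K)_0})$ with $\varSigma(h) = \ell \cup_{\partial\ell\times\{-1\}}(\partial\ell\times\varDelta^1)\cup_{\partial\ell\times\{1\}}\ell$, where $\ell$ is the union of core disks of the top handles of $h$. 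Since $h_1$ and $h_2$ are handle decompositions of one and the same Weinstein manifold $N(\varLambda_K)$, I would first invoke Weinstein handle calculus to write them as the two endpoints of a finite sequence of elementary moves: Weinstein isotopies of attaching spheres, handle slides, and creation or annihilation of a canceling pair of Weinstein handles of consecutive index. The bulk of the argument is then to track the effect of each move on the right-hand side.

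Next I would split the moves into two classes. A move leaving the set of top handles unchanged — isotopies and slides among the sub-top handles (which move $\partial\ell$ by a Legendrian isotopy in $\partial N(\varLambda_K)_0$) and isotopies and slides of the top handles themselves (which move $\ell$ rel $\partial\ell$) — induces, via \cref{lma:one-to-one-corr_reeb_chords} for the bookkeeping of Reeb chords below a fixed action bound, a Legendrian isotopy of $\varSigma(h)$ inside the fixed stopped manifold $(T^\ast\R^3)_{N(\varLambda_K)_0}$. Legendrian isotopy invariance of the Chekanov--Eliashberg dg-algebra of the attaching link of a Weinstein cobordism then upgrades this to a stable tame isomorphism, in particular a dg-homotopy equivalence and a quasi-isomorphism. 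A move that creates or cancels a canceling pair one of whose members is a top handle changes the number of top handles by one, so the idempotent ring $\boldsymbol k$ of $KCC^\ast(K,h)$ gains or loses a summand $\mathbb F e$; here I would argue, using the surgery description of the generators (\cref{lma:one-to-one-corr_reeb_chords}, \cite[Theorem 5.10]{bourgeois2012effect}), that the enlarged dg-algebra is obtained from the old one by adjoining a new idempotent together with a canceling pair of Reeb chords realizing the handle cancellation, and that the canonical inclusion exhibits a derived Morita equivalence. Composing all the moves proves (1), and then (2) is immediate since Hochschild homology is invariant under derived Morita equivalence.

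For (3) the observation is that the hypothesis precisely forbids the second class of moves: if the core disks of the top handles of $h_1$ and $h_2$ are related by a Legendrian isotopy $h^{\mathrm{top}}_t$ rel an isotopy of their boundaries in $\partial N(\varLambda_K)_0$, then $h_1$ and $h_2$ have the same number of top handles and the sequence of moves may be chosen to consist entirely of moves of the first class. Concatenating the induced Legendrian isotopies of $\varSigma(h)$ gives a single Legendrian isotopy in $(T^\ast\R^3)_{N(\varLambda_K)_0}$, and Legendrian isotopy invariance yields a dg-homotopy equivalence between $KCC^\ast(K,h_1)$ and $KCC^\ast(K,h_2)$.

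I expect the main obstacle to lie in the handle creation/cancellation step of (1): one must verify that the change in the dg-algebra is genuinely a derived Morita equivalence and not merely a quasi-isomorphism after idempotent completion, which requires producing the relevant bimodule (equivalently, the adjoint pair of functors on module categories) explicitly from the local model of the canceling pair and checking its compatibility with the differential. A secondary point is making precise, in the Weinstein setting, the claim that any two handle decompositions of $N(\varLambda_K)$ are connected by the listed elementary moves; I would quote the appropriate Weinstein Morse-theoretic statement rather than reprove it.
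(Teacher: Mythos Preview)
Your argument for (3) is essentially the paper's: the hypothesis produces a Legendrian isotopy of $\varSigma(h)$ in the stopped manifold, and one then invokes Legendrian isotopy invariance of the Chekanov--Eliashberg dg-algebra.

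For (1) and (2) you take a genuinely different route. You attempt to connect $h_1$ and $h_2$ by a sequence of elementary Weinstein handle moves and analyze the effect of each on the Chekanov--Eliashberg dg-algebra, with the handle creation/cancellation step left as the acknowledged obstacle. The paper instead avoids handle moves entirely. For (1) it invokes the surgery quasi-isomorphism $CE^\ast((N(\varLambda_K),h);T^\ast\R^3)\cong CW^\ast(C(h);(T^\ast\R^3)_{\varLambda_K})$ from \cite[Theorem 1.1]{asplund2020chekanov}, where $C(h)$ is the union of cocore disks of the top handles, and then uses that the cocores generate the wrapped Fukaya category $\mathcal W((T^\ast\R^3)_{\varLambda_K})$ by \cite{chantraine2017geometric,ganatra2022sectorial}; since the derived module category is independent of the choice of generating set, derived Morita equivalence follows immediately. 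For (2) the paper does not deduce it from (1) but gives an independent argument via the surgery formula $SH^\ast((T^\ast\R^3)_{\varLambda_K})\cong HH_\ast(KCC^\ast(K,h))\oplus SH^\ast(T^\ast\R^3)$ from \cite[Theorem 5.6]{bourgeois2012effect}, noting that neither symplectic homology depends on $h$.

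The paper's approach is shorter and sidesteps exactly the difficulty you flag: one never has to produce the Morita bimodule for a canceling pair, nor appeal to connectedness of the space of Weinstein handle decompositions. Your approach would in principle yield something more explicit (a Morita equivalence tracked move by move), but the creation/cancellation step is not just bookkeeping---one must actually construct the bimodule and verify it is invertible in the derived category, which is a nontrivial local computation you have not carried out. Your deduction of (2) from (1) via Morita invariance of Hochschild homology is valid once (1) is established, but the paper's direct argument via symplectic homology is independent of (1).
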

		\begin{proof}
			The proof of (1) and (2) were sketched in \cite[Remark 1.5]{asplund2020chekanov}. For the sake of completeness we summarize the proofs below.
			\begin{enumerate}
				\item By the surgery formula \cite[Theorem 1.1]{asplund2020chekanov} we have a quasi-isomorphism
				\[
					CE^\ast((N(\varLambda_K),h);T^\ast \R^3) \cong CW^\ast(C(h);(T^\ast \R^3)_{\varLambda_K}),
				\]
				where $(T^\ast \R^3)_{\varLambda_K}$ is $T^\ast \R^3$ stopped at $\varLambda_K$ and where $C(h)$ is the union of cocore disks of the top handle of $(T^\ast \R^3)_{\varLambda_K}$ depending on the handle decomposition $h$ of $N(\varLambda_K)$. The wrapped Fukaya category $\mathcal W((T^\ast \R^3)_{\varLambda_K})$ is generated by the summands of $C(h)$ by \cite[Theorem 1.1]{chantraine2017geometric} and \cite[Theorem 1.13]{ganatra2022sectorial}. As the derived module category is independent of choice of a generating set, the result follows.
				\item Attachment of the $1$-simplex handle $N(\varLambda_K)_0 \times D_\varepsilon T^\ast \varDelta^1$ gives rise to a natural geometric Weinstein cobordism with positive contact boundary $\partial (T^\ast \R^3)_{\varLambda_K}$ and negative contact boundary $\partial T^\ast \R^3$. By \cite[Theorem 5.6]{bourgeois2012effect} we have a quasi-isomorphism
				\[
					SH^\ast((T^\ast \R^3)_{\varLambda_K}) \cong HH_\ast(CE^\ast((N(\varLambda_K),h); T^\ast \R^3)) \oplus SH^\ast(T^\ast \R^3).
				\]
				Since $SH^\ast((T^\ast \R^3)_{\varLambda_K})$ and $SH^\ast(T^\ast \R^3)$ does not depend on the handle decomposition $h$, it follows that the Hochschild homology of the Chekanov--Eliashberg dg-algebra also does not.
				\item Such Legendrian isotopies of $h^{\text{top}}_1$ and $h^{\text{top}}_2$ induces a Legendrian isotopy $\varSigma(h_1) \simeq \varSigma(h_2)$ (cf.\@ \cite[Remark 1.5]{asplund2020chekanov}) and the result follows from the invariance of the Chekanov--Eliashberg dg-algebra up to dg-homotopy equivalent under Legendrian isotopy, see \cite[Appendix A]{ekholm2015legendrian} and \cite[Corollary 5.17]{ekholm2017symplectic}.
			\end{enumerate}
		\end{proof}
		\begin{lma}
			For generic choices of metrics and almost complex structures, the dg-homotopy equivalence class of the dg-algebra $KCC^\ast(K,h)$ is an ambient isotopy invariant of $K \subset \R^3$
		\end{lma}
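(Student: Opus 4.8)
The plan is to reduce the statement to invariance of the Chekanov--Eliashberg dg-algebra under Legendrian isotopy, following the standard strategy for knot contact homology. First I would take an ambient isotopy $\phi_t \colon \R^3 \to \R^3$, $t\in[0,1]$, with $\phi_0 = \id$ and $\phi_1(K) = K'$. Its cotangent lift $\widehat\phi_t$ is an exact symplectomorphism of $T^\ast\R^3$ which, under the (metric-independent) identification of $ST^\ast\R^3$ with the spherization $\partial_\infty T^\ast\R^3$, descends to a contact isotopy $\Phi_t$ of $ST^\ast\R^3$ with $\Phi_0 = \id$; by naturality of the conormal construction, $\Phi_t(\varLambda_K) = \varLambda_{\phi_t(K)}$ and $\widehat\phi_t(N(\varLambda_K))$ is a cotangent neighborhood of $\varLambda_{\phi_t(K)}$. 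In particular $\varLambda_K$ and $\varLambda_{K'}$ are Legendrian isotopic in $ST^\ast\R^3$ via $t\mapsto\Phi_t(\varLambda_K)$.

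Next I would carry the auxiliary data along. Fixing a handle decomposition $h$ of a small cotangent neighborhood $N(\varLambda_K)$, set $N(\varLambda_{\phi_t(K)}) := \widehat\phi_t(N(\varLambda_K))$ with pushed-forward handle decomposition $h^{(t)} := (\widehat\phi_t)_\ast h$, a smooth family; the symplectomorphism $\widehat\phi_t$ extends over the simplicial handle to a symplectomorphism of stopped manifolds $(T^\ast\R^3)_{N(\varLambda_K)_0} \to (T^\ast\R^3)_{N(\varLambda_{\phi_t(K)})_0}$ carrying the top Legendrian attaching link $\varSigma(h)$ of \cref{lma:top_attaaching_spheres_stopped_manifold} to $\varSigma(h^{(t)})$. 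Transporting a compatible almost complex structure as well, this is an ambient symplectic isotopy of the data defining $KCC^\ast(K,h) = CE^\ast(\varSigma(h);(T^\ast\R^3)_{N(\varLambda_K)_0})$. Invariance of the Chekanov--Eliashberg dg-algebra under Legendrian isotopy together with ambient symplectomorphism of the filling --- as invoked in the proof of \cref{lma:kch_invariance}, see \cite[Appendix A]{ekholm2015legendrian} and \cite[Corollary 5.17]{ekholm2017symplectic} --- then yields a dg-homotopy equivalence $KCC^\ast(K,h) \simeq KCC^\ast(K',h^{(1)})$.

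Finally I would argue that the dg-homotopy equivalence class of $KCC^\ast(K',h^{(1)})$ does not depend on the choices made along the way --- the isotopy $\phi_t$ and the neighborhood $N(\varLambda_K)$, hence $h^{(1)}$ as a handle decomposition of $N(\varLambda_{K'})$. Since the space of cotangent neighborhoods of a fixed Legendrian is connected and any two lifts of $\phi_1$ are contact-isotopic relative to their endpoints, any two resulting decompositions $h^{(1)}$ have top core disks related by a Legendrian isotopy in $T^\ast\R^3$ whose boundary is a Legendrian isotopy in $\partial N(\varLambda_{K'})_0$; then \cref{lma:kch_invariance}(3) identifies the corresponding algebras up to dg-homotopy equivalence. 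Chaining the two steps shows that for ambient isotopic links the algebras $KCC^\ast(K,h)$ and $KCC^\ast(K',h')$, computed with respect to corresponding handle decompositions, are dg-homotopy equivalent, and that this class is independent of the isotopy --- i.e.\ the dg-homotopy equivalence class of $KCC^\ast(K,h)$ is an ambient isotopy invariant.

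The hard part will be the middle step: promoting the contact isotopy $\Phi_t$, which a priori only moves $\varLambda_K$, to an \emph{ambient} symplectic isotopy of the full datum $(N(\varLambda_K),h)$ and of the attaching link $\varSigma(h)$ inside the stopped manifold, so that the Reeb chords of action below any fixed bound and the moduli spaces of $J$-holomorphic curves counted by the differential vary continuously. Here the thin-handle correspondence of \cref{lma:one-to-one-corr_reeb_chords} and the constructions of \cref{sec:ce_dga_for_hypersurface_wrt_simp_decomp} guarantee that the generators and the differential can be tracked through the isotopy, so that the standard bifurcation argument for Legendrian isotopy invariance applies without change.
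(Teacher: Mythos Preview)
Your proposal is correct and follows essentially the same approach as the paper: reduce ambient isotopy of $K$ to a Legendrian isotopy of $\varLambda_K$, hence an isotopy of Weinstein hypersurfaces $N(\varLambda_K)\hookrightarrow\partial T^\ast\R^3$, and then invoke \cref{lma:kch_invariance}(3) (with the same citations \cite[Appendix A]{ekholm2015legendrian}, \cite[Corollary 5.17]{ekholm2017symplectic}). The paper compresses this into two sentences, whereas you spell out the cotangent lift, the pushforward of the handle data, and the extension over the stopped manifold; your additional discussion of independence of choices is not in the paper's proof but is a reasonable elaboration.
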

		\begin{proof}
			Ambient isotopies of $K$ induces Legendrian isotopies of $\varLambda_K$ which induces isotopies of Weinstein hypersurfaces $N(\varLambda_K) \hookrightarrow \partial T^\ast \R^3$. Such isotopies gives Legendrian isotopies as in \cref{lma:kch_invariance}(3). Note in particular that the handle decomposition $h$ of $N(\varLambda_K)$ is unchanged under isotopies of $K$ as it is only the Weinstein embedding $N(\varLambda_K) \hookrightarrow \partial T^\ast \R^3$ that changes.
		\end{proof}
		\begin{thm}\label{thm:kch_and_kch_eens_single_top_handle}
			Let $K \subset \R^3$ be an oriented link. If $h$ is a handle decomposition of $N(\varLambda_K)$ such that each component of $N(\varLambda_K)$ has a single top handle, then for generic choices of metrics and almost complex structures there is a quasi-isomorphism
			\[
				KCC^\ast(K,h) \cong KCC_{\mathrm{EENS}}^\ast(K).
			\]
		\end{thm}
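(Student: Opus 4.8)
The plan is to deduce this from \cref{thm:loop_space_dga} together with an elementary computation of the based loop space of the unit conormal torus, and then a functoriality argument describing how $CE^\ast(\varLambda_K,-)$ depends on its coefficient algebra.

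First I would observe that the hypothesis on $h$ is precisely the one required by \cref{thm:loop_space_dga}, applied with the Weinstein manifold $X = T^\ast \R^3$ and the smooth Legendrian $\varLambda = \varLambda_K \subset \partial T^\ast \R^3$: each component of the small cotangent neighborhood $N(\varLambda_K)$ has a single top handle. That theorem therefore yields a quasi-isomorphism of dg-algebras
\[
	KCC^\ast(K,h) = CE^\ast((N(\varLambda_K),h);T^\ast \R^3) \cong CE^\ast\!\left(\varLambda_K,\, C_{-\ast}\!\left(\bigsqcup_{i\in \pi_0(\varLambda_K)}\varOmega(\varLambda_K)_i\right);T^\ast \R^3\right),
\]
where the right hand side is the Chekanov--Eliashberg dg-algebra with loop space coefficients of \cite{ekholm2017duality}.

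Next I would identify the coefficient algebra. Each component $(\varLambda_K)_i$ of the unit conormal bundle of the $i$-th link component is diffeomorphic to the $2$-torus $T^2$, which is aspherical with universal cover $\R^2$. Hence every path component of $\varOmega (\varLambda_K)_i$ is contractible and $\pi_0(\varOmega(\varLambda_K)_i) = \pi_1((\varLambda_K)_i) = H_1((\varLambda_K)_i) \cong \Z^2$, the two generators being the longitude class $\lambda_i$ and the meridian class $\mu_i$. Consequently the dg-algebra map
\[
	C_{-\ast}\!\left(\varOmega(\varLambda_K)_i\right) \longrightarrow \Z\!\left[\pi_0(\varOmega(\varLambda_K)_i)\right] = \Z[\lambda_i^{\pm 1},\mu_i^{\pm 1}]
\]
sending a loop to its path component (placed in degree $0$) and carrying the Pontryagin product to the group ring multiplication is a quasi-isomorphism.

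Finally I would invoke functoriality of $CE^\ast(\varLambda_K,-)$ in the coefficient algebra. The map of coefficient systems above respects the capping-path data entering the loop space differential (a loop bounding a holomorphic disk maps to the trivial group element), so it induces a dg-algebra morphism
\[
	CE^\ast\!\left(\varLambda_K, C_{-\ast}\!\left(\textstyle\bigsqcup_i \varOmega(\varLambda_K)_i\right);T^\ast \R^3\right) \longrightarrow CE^\ast\!\left(\varLambda_K, \Z[\lambda_1^{\pm 1},\mu_1^{\pm 1},\ldots,\lambda_r^{\pm 1},\mu_r^{\pm 1}];T^\ast \R^3\right) = KCC^\ast_{\text{EENS}}(K),
\]
the target being exactly \cref{dfn:kch_eens}. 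Filtering both dg-algebras by the number of Reeb-chord letters in a word (equivalently, by action), the differential is triangular for this filtration and on the associated graded the induced map is a direct sum of tensor products of the coefficient quasi-isomorphism; since the filtration is bounded below in each action window and exhaustive, the comparison map is a quasi-isomorphism. The main obstacle I expect is making this last step precise: one has to spell out how the differential of $CE^\ast(\varLambda_K, C_{-\ast}(\varOmega \varLambda_K))$ is assembled from the coefficient algebra so that a coefficient quasi-isomorphism compatible with capping paths produces a chain map, and to confirm that the EENS dg-algebra of \cref{dfn:kch_eens} is genuinely the $\Z[\pi_1]$-specialization of the loop space construction — which, by the asphericity computation above, loses no information in the case of the conormal torus.
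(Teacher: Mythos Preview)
Your proposal is correct and follows essentially the same route as the paper: apply \cref{thm:loop_space_dga}, use that each component $\varLambda_{K_i}\cong T^2$ is aspherical so that $C_{-\ast}(\varOmega\varLambda_{K_i})\simeq \Z[H_1(\varLambda_{K_i})]=\Z[\lambda_i^{\pm1},\mu_i^{\pm1}]$, and then pass this coefficient quasi-isomorphism through $CE^\ast(\varLambda_K,-)$. The paper's proof is terser than yours---it simply asserts the last step (``the result follows'') where you supply a word-length filtration argument---so if anything your write-up is more careful about the functoriality issue you flag as the main obstacle.
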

		\begin{proof}
			By \cref{thm:loop_space_dga} it follows that $KCC^\ast(K,h) \cong CE^\ast \left(\varLambda_K, \bigoplus_{i}C_{-\ast}(\varOmega \varLambda_{K_i}); T^\ast \R^3\right)$. Since $\varLambda_{K_i} \cong T^2$ for each component $K_i$ of $K$ it follows that we have a quasi-isomorphism $C_{-\ast}(\varOmega \varLambda_{K_i}) \cong \boldsymbol k[H_1(\varLambda_{K_i})]$ for each $i$. We obtain the fully non-commutative knot contact homology by definition of the Chekanov--Eliashberg dg-algebra with loop space coefficients as in \cite{ekholm2017duality} in which generators corresponding to chains of based loops do not commute with Reeb chord generators. The result follows. 
		\end{proof}
	\subsection{Tangle contact homology}\label{sec:kch_tangles}
		Given a tangle $T$ in $\R^3_{x\geq 0}$, its unit conormal bundle $\varLambda_T$ is a Legendrian relative to the Weinstein hypersurface $T^\ast H \hookrightarrow \partial \widetilde{T^\ast \R^3_{x\geq 0}}$ in the sense of \cref{dfn:relative_leg} where $H := \partial \R^3_{x\geq 0}$. We consider handle decompositions $h$ of $N(\varLambda_T)$ as induced by a Morse function on $\varLambda_T$ treated as a compact manifold-with-boundary, cf\@ \cref{ex:handle_decomp_sector}.
		
		Let $V_T$ denote the subcritical part of $N(\varLambda_T)$ with respect to the handle decomposition $h$ of $N(\varLambda_T)$, and let $\ell_T$ be the union of core disks of the top handles in $h$. We have that $V_T$ is a Weinstein subsector of $\R^3_{x\geq 0}$ which meets $T^\ast H$ along $Q_T := V_T \cap ST^\ast H$ since we assume that $T$ intersect $H$ orthogonally, see \cref{asm:orthogonality_chords}. After convex completion we have that $\ell_T$ is a Legendrian submanifold relative to the Weinstein pair $(\widetilde{T^\ast \R^3_{x\geq 0}}, P_T)$ in the sense of \cref{sec:relative_legendrians}, where $P_T$ is the Weinstein hypersurface $P_T := \widetilde{V_T } \#_{Q_T} T^\ast H \hookrightarrow \partial_\infty \widetilde{T^\ast \R^3_{x \geq 0}}$.
		\begin{dfn}[Tangle contact homology]\label{dfn:tch}
			Let $T$ be an oriented tangle in $\R^3_{x\geq 0}$ and let $h$ be a handle decomposition of $N(\varLambda_T)$. We define the \emph{tangle contact homology algebra of $T$} as
			\[
				KCC^\ast(T,h) := CE^\ast(\ell_T, P_T; \widetilde{T^\ast \R^3_{x\geq 0}}).
			\]
		\end{dfn}
		\begin{lma}
			Let $h$ be a handle decomposition of $N(\varLambda_T)$. For generic choices of metrics and almost complex structures, the quasi-isomorphism class of the dg-algebra $KCC^\ast(T,h)$ is an ambient isotopy invariant of $T \subset \R^3_{x\geq 0}$.
		\end{lma}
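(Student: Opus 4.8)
The plan is to mimic the proof of the corresponding statement for links (the invariance lemma following \cref{lma:kch_invariance}), reducing the claim to invariance of the Chekanov--Eliashberg dg-algebra under Legendrian isotopy and deformation of the ambient Weinstein manifold. Let $\phi_t \colon \R^3_{x\geq 0} \to \R^3_{x\geq 0}$ be an ambient isotopy with $\phi_0 = \id$, $\phi_1(T) = T'$, and $\phi_t(\partial T) \subset \left\{y^2+z^2 = 1\right\}$ for all $t$, as in \cref{dfn:equiv_of_tangles}. First I would observe that since $\phi_t$ preserves $H = \partial \R^3_{x\geq 0}$, the family $\varLambda_{\phi_t(T)} \subset \partial_\infty \widetilde{T^\ast \R^3_{x\geq 0}}$ is a smooth family of Legendrian submanifolds relative to $T^\ast H$, i.e.\@ a Legendrian isotopy rel $T^\ast H$ from $\varLambda_T$ to $\varLambda_{T'}$ whose boundary $\partial \varLambda_{\phi_t(T)} = \varLambda_{\phi_t(\partial T)}$ moves within $ST^\ast H$. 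One convenient way to see this is to use the cotangent lift of $\phi_t$, which is an exact symplectomorphism of $T^\ast \R^3_{x\geq 0}$ carrying $T^\ast H$ to itself and $\varLambda_T$ to $\varLambda_{\phi_t(T)}$.

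Next I would transport the handle decomposition $h$ of $N(\varLambda_T)$ along this isotopy to obtain a handle decomposition $h_t$ of $N(\varLambda_{\phi_t(T)})$ with $h_0 = h$, and set $h' := h_1$. Under this transport the subcritical part $V_{\phi_t(T)}$, the Weinstein hypersurface $P_{\phi_t(T)} = \widetilde{V_{\phi_t(T)}} \#_{Q_{\phi_t(T)}} T^\ast H$, and the union of top core disks $\ell_{\phi_t(T)}$ all vary smoothly: $\{P_{\phi_t(T)}\}_t$ is a Weinstein homotopy of hypersurfaces, and $\{(\ell_{\phi_t(T)}, \partial \ell_{\phi_t(T)})\}_t$ is a Legendrian isotopy relative to it. By \cref{dfn:ce_of_rel_leg} we have $KCC^\ast(\phi_t(T), h_t) = CE^\ast(\varSigma_{\ell_{\phi_t(T)}}; (\widetilde{T^\ast \R^3_{x\geq 0}})_{(P_{\phi_t(T)})_0})$, and the above data assemble into a Weinstein homotopy of the stopped sector $(\widetilde{T^\ast \R^3_{x\geq 0}})_{(P_{\phi_t(T)})_0}$ together with a Legendrian isotopy of the (non-compact) Legendrian $\varSigma_{\ell_{\phi_t(T)}}$ in it. Invariance of the Chekanov--Eliashberg dg-algebra under Legendrian isotopy and deformation of the ambient Weinstein manifold (as in \cite[Appendix A]{ekholm2015legendrian} and \cite[Corollary 5.17]{ekholm2017symplectic}, used exactly as in the proof of item (3) of \cref{lma:kch_invariance}) then yields a quasi-isomorphism $KCC^\ast(T, h) \cong KCC^\ast(T', h')$, which is the claim.

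The main point requiring care is that $\varSigma_{\ell_T}$ and the stopped sector are non-compact, so the invariance statement must be applied in the sectorial setting with control of the Reeb chords near the cylindrical end $\{0\}\times\partial \ell_T\times\R$. This is handled exactly as in \cite{asplund2020chekanov}: since $\phi_t$ can be taken supported in a compact region of $\R^3_{x\geq 0}$ while keeping $\partial T$ in the fixed circle bundle, the induced isotopy is compactly supported away from the non-compact ends, and the action-truncated one-to-one correspondence of Reeb chords of \cref{lma:one-to-one-corr_reeb_chords} persists along the isotopy, so a cofinal sequence of action-truncated dg-algebras is preserved and the invariance argument goes through unchanged.
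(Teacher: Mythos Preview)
Your proposal is correct and follows essentially the same approach as the paper: the paper's proof is a one-liner stating that ambient isotopies of $T$ induce Legendrian isotopies of the relative Legendrian $\ell_T$ in the sense of item (3) of \cref{lma:kch_invariance}, whence the result follows from the invariance of the Chekanov--Eliashberg dg-algebra cited there. You have simply spelled out this mechanism in more detail (the cotangent lift, transport of the handle decomposition, the Weinstein homotopy of the stopped sector, and the care needed for the non-compact end), all of which the paper leaves implicit in its references.
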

		\begin{proof}
			Ambient isotopies of $T$ induces Legendrian isotopies of the relative Legendrian submanifold $\ell_T$ in the sense of \cref{lma:kch_invariance}(3). Note in particular that the handle decomposition of $N(\varLambda_T)$ remains fixed and that it is only the Weinstein embedding $N(\varLambda_T) \hookrightarrow \partial \widetilde{T^\ast \R^3_{x\geq 0}}$ that changes. The result follows.
		\end{proof}
	
		\begin{lma}\label{lma:subalgebra_kch}
			Let $T$ be an oriented  tangle in $\R^3_{x\geq 0}$ and let $h$ be a handle decomposition of $N(\varLambda_T)$. There is a dg-subalgebra of $KCC^\ast(T,h)$ which is canonically quasi-isomorphic to $KCC^\ast(\partial T, h_\partial)$.
		\end{lma}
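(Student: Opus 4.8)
The plan is to argue exactly as in the proof of \cref{lma:subalgebra_kch_u}, but now keeping track of the handle decomposition. Unwinding the definition, $KCC^\ast(T,h) = CE^\ast(\ell_T, P_T; \widetilde{T^\ast \R^3_{x\geq 0}})$, where $\ell_T$ is the union of core disks of the top handles of $h$, viewed as a Legendrian relative to the Weinstein pair $(\widetilde{T^\ast \R^3_{x\geq 0}}, P_T)$ with $P_T = \widetilde{V_T} \#_{Q_T} T^\ast H$. Applying \cref{lma:subalgebra_of_relative_dga} with $X = \widetilde{T^\ast \R^3_{x\geq 0}}$, $V = P_T$ and $\varLambda = \ell_T$ produces a dg-subalgebra of $KCC^\ast(T,h)$ that is canonically quasi-isomorphic to $CE^\ast(\partial \ell_T; (P_T)_0)$. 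The whole problem then reduces to identifying $CE^\ast(\partial \ell_T; (P_T)_0)$ with $KCC^\ast(\partial T, h_\partial) = CE^\ast((N(\varLambda_{\partial T}), h_\partial); T^\ast H)$.

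For the reduction I would proceed as follows. The Morse function on $\varLambda_T$ used to build $h$ restricts to a Morse function on $\partial \varLambda_T = \varLambda_{\partial T}$ (when $\varLambda_T$ is treated as a compact manifold with boundary, the half-handles attached along $\partial \varLambda_T$ are governed precisely by this restriction), and the handle decomposition of $N(\varLambda_{\partial T}) \cong D_\varepsilon T^\ast \varLambda_{\partial T}$ that it induces is exactly $h_\partial$; note that $N(\varLambda_{\partial T})$ is the portion of $N(\varLambda_T)$ lying over $\partial \varLambda_T$, which is where $ST^\ast H$ meets $N(\varLambda_T)$. Under this restriction the subcritical part of $N(\varLambda_{\partial T})$ with respect to $h_\partial$ is exactly $Q_T = V_T \cap ST^\ast H$, so $P_T = \widetilde{V_T} \#_{Q_T} T^\ast H$ coincides with the Weinstein manifold $(T^\ast H)_{N(\varLambda_{\partial T})_0}$ entering the definition of $CE^\ast((N(\varLambda_{\partial T}), h_\partial); T^\ast H)$. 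Likewise the union $\partial \ell_T$ of attaching spheres of the top handles of $N(\varLambda_T)$, restricted to the boundary, is identified via \cref{lma:top_attaaching_spheres_stopped_manifold} with $\varSigma(h_\partial)$. Combining these, $CE^\ast(\partial \ell_T; (P_T)_0) = CE^\ast(\varSigma(h_\partial); (T^\ast H)_{N(\varLambda_{\partial T})_0}) = KCC^\ast(\partial T, h_\partial)$, and the canonical quasi-isomorphism from the first paragraph gives the claim.

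The step I expect to be the main obstacle is the last one: verifying these boundary identifications rigorously. Concretely, one must use the Morse theory of manifolds with boundary underlying the construction of $KCC^\ast(T,h)$ to check that the top (half-)handles of $\varLambda_T$ restrict to the top handles of $\partial \varLambda_T$ in the way claimed, and that the Weinstein connected sum $\widetilde{V_T} \#_{Q_T} T^\ast H$ really reproduces the ``doubling'' built into the stopped manifold $(T^\ast H)_{N(\varLambda_{\partial T})_0}$; here \cref{lma:one-to-one-corr_reeb_chords} together with \cref{lma:top_attaaching_spheres_stopped_manifold} should handle the bookkeeping of Reeb chords below a fixed action bound. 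As a consistency check, when $h_\partial$ has a single top handle for each component of $\varLambda_{\partial T}$, \cref{lma:dg-subalg_quasi-iso_loop_space} and \cref{thm:loop_space_dga} identify both $CE^\ast(\partial \ell_T; (P_T)_0)$ and $KCC^\ast(\partial T, h_\partial)$ with the Chekanov--Eliashberg dg-algebra of $\varLambda_{\partial T}$ with loop space coefficients, which makes the identification essentially forced.
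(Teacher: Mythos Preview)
Your first step is fine and matches the paper: applying \cref{lma:subalgebra_of_relative_dga} to $\ell_T$ relative to $P_T$ yields a dg-subalgebra of $KCC^\ast(T,h)$ quasi-isomorphic to $CE^\ast(\partial \ell_T; (P_T)_0)$.

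The gap is in your second step, where you assert that $P_T = \widetilde{V_T} \#_{Q_T} T^\ast H$ ``coincides with the Weinstein manifold $(T^\ast H)_{N(\varLambda_{\partial T})_0}$'' and that $\partial \ell_T$ is identified with $\varSigma(h_\partial)$. This is not true in general. The piece $\widetilde{V_T}$ is the (convex completion of the) subcritical part of $N(\varLambda_T)$, and it typically carries interior handles coming from interior critical points of the Morse function on $\varLambda_T$; see for instance the handle decompositions in \cref{sec:explicit_handle_decomp}. Hence $\widetilde{V_T}$ is not the trivial half-cylinder $\R \times (Q_T \times \R)$ used in the stopping construction, and $P_T$ is not the stopped manifold $(T^\ast H)_{N(\varLambda_{\partial T})_0}$. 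Correspondingly, $\partial \ell_T$ decomposes as
\[
\partial \ell_T \;\simeq\; \eval[0]{\partial \ell_T}_{\partial \widetilde{V_T}} \;\sqcup\; (\partial \ell_T \cap \partial Q_T \times \varDelta^1) \;\sqcup\; \eval[0]{\partial \ell_T}_{ST^\ast H},
\]
and the first piece has its own Reeb chords in $\partial \widetilde{V_T}$. These produce genuine extra generators of $CE^\ast(\partial \ell_T;(P_T)_0)$ that are not present in $KCC^\ast(\partial T, h_\partial)$; the generators $\widehat t^{\,p}_{ij}$ in \cref{ex:framed_trivial} are exactly of this type. So the identification $CE^\ast(\partial \ell_T;(P_T)_0) \cong KCC^\ast(\partial T, h_\partial)$ you are aiming for is an over-claim.

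What the paper does instead is to use the simplicial decomposition $P_T = \widetilde{V_T} \#_{Q_T} T^\ast H$ together with subcriticality of $Q_T$ to express $CE^\ast(\partial \ell_T; P_T)$ as the pushout of the three dg-algebras associated to the three pieces of $\partial \ell_T$ above (as in \cite[Theorem 4.6]{asplund2021simplicial}). One of those pieces is $CE^\ast(\eval[0]{\partial \ell_T}_{ST^\ast H}, Q_T; T^\ast H)$, which by definition is $KCC^\ast(\partial T, h_\partial)$; being a leg of the pushout it sits inside $CE^\ast(\partial \ell_T; P_T)$ as a dg-subalgebra, and hence inside $KCC^\ast(T,h)$. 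Your argument can be repaired by inserting this decomposition step rather than attempting to identify $CE^\ast(\partial \ell_T;(P_T)_0)$ globally with $KCC^\ast(\partial T, h_\partial)$.
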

		\begin{proof}
			First we note that by definition of $KCC^\ast(T,h)$ and by \cref{lma:subalgebra_of_relative_dga} there is a dg-subalgebra of $KCC^\ast(T,h)$ canonically quasi-isomorphic to $CE^\ast(\partial \ell_T; P_T)$ where
			\[
				\partial \ell_T \simeq \eval[0]{\partial \ell_T}_{\partial \widetilde{V_T}} \sqcup \underbrace{(\partial \ell_T \cap \partial Q_T \times \varDelta^1)}_{=: \ell_{\partial T}} \sqcup \eval[0]{\partial \ell_T}_{ST^\ast H}.
			\]
			By construction we have $\partial \ell_T \subset \partial(\widetilde{V_T} \#_{Q_T} T^\ast H)$. By construction $Q_T \hookrightarrow \partial V_T, \partial T^\ast H$ is subcritical. Thus by the same proof as \cite[Theorem 4.6]{asplund2021simplicial} $CE^\ast(\partial \ell_T; P_T)$ is quasi-isomorphic to the colimit of the following diagram
			\[
				\begin{tikzcd}[row sep=scriptsize, column sep=scriptsize]
					CE^\ast(\ell_{\partial T}, Q_T) \rar \dar & CE^\ast(\eval[0]{\partial \ell_T}_{ST^\ast H}, Q_T; T^\ast H)\\
					CE^\ast(\eval[0]{\partial \ell_T}_{\partial \widetilde{V_T}}, Q_T; \widetilde{V_T})&
				\end{tikzcd}.
			\]
			In particular we note that by definition $CE^\ast(\eval[0]{\partial \ell_T}_{ST^\ast H}, Q_T; T^\ast H) = KCC^\ast(\partial T, h_\partial)$, and this is a dg-subalgebra of $CE^\ast(\partial \ell_T; P_T)$ and hence of $KCC^\ast(T, h)$.
		\end{proof}
	\subsection{Gluing formula recovering knot contact homology}\label{sec:gluing_formula_kch}
		Let $T_1$ and $T_2$ be two oriented tangles in $\R^3_{x\geq 0}$ with $\partial T_1 = \partial T_2$ and such that they glue up to an oriented link in $\R^3$. Let $h_1$ and $h_2$ be handle decompositions of $N(\varLambda_{T_1})$ and $N(\varLambda_{T_2})$, respectively, such that the handle decompositions $(h_1)_\partial$ and $(h_2)_\partial$ of the boundary $N(\partial \varLambda_T)$ agree (see \cref{dfn:handle_decomp_pair} and \cref{ex:handle_decomp_sector}). Suppose that the gluing of the two tangles $T_1$ and $T_2$ gives a link $K$. This induces a gluing of $N(\varLambda_{T_1})$ and $N(\varLambda_{T_2})$ such that it respects the handle decompositions $h_1$ and $h_2$ in the sense that the boundary critical points along $N(\partial \varLambda_T)$ match up after the gluing. This gives an induced handle decomposition on $N(\varLambda_K)$ which we denoted by $h_1 \# h_2$, and we call such $h_1$ and $h_2$ \emph{gluing compatible}.

		Let $h_H$ be a handle decomposition with a single top handle of $T^\ast H \cong T^\ast \R^2$ viewed as $B^4$ stopped at the unknot. Let $\ell_H$ denote the $2$-dimensional core disk in $h_H$ and let $\varSigma_H(h_H)$ denote the $2$-dimensional attaching sphere of the handle $T^\ast H \times T^\ast \varDelta^1$ in $\left(\widetilde{T^\ast \R^3_{x\geq 0}}\right)_{T^\ast H}$, used in the definition of $KCC^\ast(T_i,h_i)$. We use the notation $\partial \ell_H^1$ to denote the $1$-dimensional core disk of a handle decomposition of $T^\ast(\text{unknot})$ such that by attaching a stop on the unknot using this handle decomposition, the top attaching sphere of $T^\ast \R^2$ becomes exactly $\partial \ell$, see \cref{fig:core_disk_h}.

		\begin{figure}[!htb]
			\centering
			\includegraphics{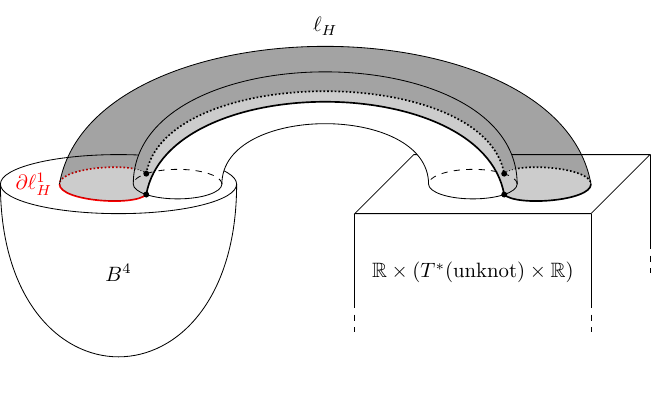}
			\caption{The $2$-dimensional core disk $\ell_H$ of the top handle of $T^\ast \R^2$ viewed as $B^4$ stopped at the unknot, with the $1$-dimensional core disk $\partial \ell_H^1$ in red.}\label{fig:core_disk_h}
		\end{figure}

		Applying \cref{lma:surgery_desc_generators_relative} we first describe the dg-algebras $KCC^\ast(T_i,h_i) := CE^\ast(\ell_{T_i}, P_{T_i}; \widetilde{T^\ast \R^3_{x\geq 0}})$. Let $\varSigma_{T_i}(h_i) = \ell_{T_i} \sqcup_{\partial \ell_{T_i} \times \left\{-1\right\}} (\partial \ell_{T_i} \times \varDelta^1) \sqcup_{\partial \ell_{T_i} \times \left\{1\right\}} (\left\{0\right\} \times \partial \ell_{T_i} \times \R) \subset \partial \left(\widetilde{T^\ast \R^3_{x\geq 0}}\right)_{P_{T_i}}$ be the non-compact Legendrian as in \cref{dfn:ce_of_rel_leg}. Let $\partial \ell_{T_i}^{H} := \partial \ell_{T_i} \cap B^4$, where $B^4$ is the subcritical part of $T^\ast H$, see \cref{fig:gen_with_hypersurface}.

		\begin{figure}[!htb]
			\centering
			\includegraphics{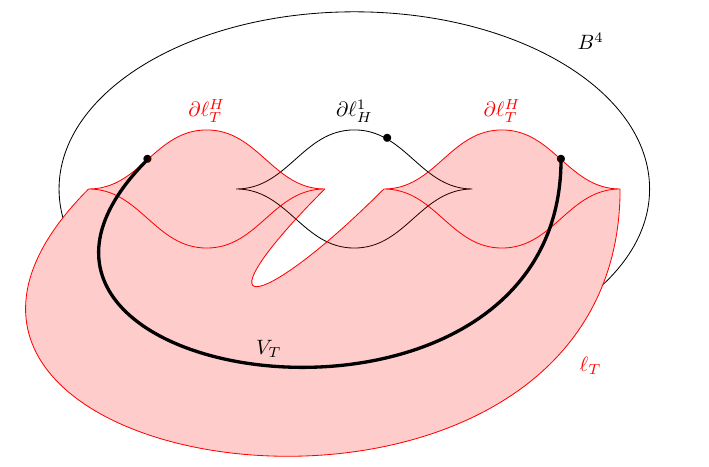}
			\caption{The figure shows the core disk $\ell_T$ of a handle decomposition $h$ of $N(\varLambda_T)$ with subcritical part $V_T \cong D^2 \times I$.}\label{fig:gen_with_hypersurface}
		\end{figure}
		\begin{lma}\label{lma:kcc_tangle_with_hypersurface}
			Fix a generic choice of metric and almost complex structure.
			\begin{enumerate}
				\item The dg-algebra $KCC^\ast(T,h)$ is quasi-isomorphic to the dg-algebra generated by composable words of Reeb chords of the form $\varSigma_{T}(h) \to \varSigma_{T}(h)$ and $\varSigma_{T}(h) \to \varSigma_H(h_H) \to \cdots \to \varSigma_H(h_H) \to \varSigma_{T}(h)$, and where the differential is the one induced from the differential of the Chekanov--Eliashberg dg-algebra of $\varSigma_{T}(h) \cup \varSigma_H(h_H) \subset \partial \left(\widetilde{T^\ast \R^3_{x\geq 0}}\right)_{P_{T,0}}$.
				\item The dg-subalgebra $KCC^\ast(\partial T, h_\partial) \subset KCC^\ast(T_i, h_i)$ is quasi-isomorphic to the dg-algebra generated by composable words of Reeb chords of the form $\partial \ell_{T_i}^H \to \partial \ell_{T_i}^H$ and $\partial \ell_{T_i}^H \to \partial \ell_H^1 \to \cdots \to \partial \ell_H^1 \to \partial \ell_{T_i}^H$ and where the differential is the one induced from the differential of the Chekanov--Eliashberg dg-algebra of $\partial \ell_{T_i}^H \cup \partial \ell_H^1 \subset \partial P_{T_i}$.
			\end{enumerate}
		\end{lma}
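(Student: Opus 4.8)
For part (1), the plan is to read the claimed description off the surgery description of the generators of the Chekanov--Eliashberg dg-algebra of a relative Legendrian with respect to a simplicial decomposition of a Weinstein pair (\cref{lma:surgery_desc_generators_relative}), after recording that the Weinstein hypersurface $P_T$ is a Weinstein connected sum along a subcritical locus, so that the stopping data seen by the dg-algebra comes only from the $T^\ast H$ summand. By definition $KCC^\ast(T,h) = CE^\ast(\ell_T, P_T; \widetilde{T^\ast \R^3_{x\geq 0}})$, where $\ell_T$ is a Legendrian relative to the Weinstein pair $(\widetilde{T^\ast \R^3_{x\geq 0}}, P_T)$ and $P_T = \widetilde{V_T}\#_{Q_T}T^\ast H$, with $\widetilde{V_T}$ (indeed $V_T\cong D^2\times I$) and $Q_T = V_T\cap ST^\ast H$ both subcritical. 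First I would fix a simplicial decomposition of the pair $(\widetilde{T^\ast \R^3_{x\geq 0}}, P_T)$ whose hypersurface part has $C' = \varDelta^1$ and records the splitting $P_T = \widetilde{V_T}\#_{Q_T}T^\ast H$ (the ambient decomposition $C\supseteq C'$ being inessential, as $\widetilde{T^\ast \R^3_{x\geq 0}}$ itself is not being glued). Since $\widetilde{V_T}$ and $Q_T$ carry no critical handles, \cref{lma:top_attaaching_spheres_stopped_manifold} shows that the union $\varSigma(\boldsymbol h)$ of top attaching spheres of the simplicial handle $P_T\times D_\varepsilon T^\ast \varDelta^1$ equals $\ell_H\cup(\partial\ell_H\times\varDelta^1)\cup\ell_H$, the same Legendrian as $\varSigma_H(h_H)$; and by \cref{lma:one-to-one-corr_reeb_chords}, below a fixed action bound $\mathfrak a$ and for sufficiently thin handles, the Reeb chords of $\varSigma_T(h)\cup\varSigma(\boldsymbol h)$ and their holomorphic-disk counts inside $\partial\left(\widetilde{T^\ast \R^3_{x\geq 0}}\right)_{P_{T,0}}$ are unchanged whether we stop at $P_T$ or at $T^\ast H$.

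With this in hand, \cref{lma:surgery_desc_generators_relative} identifies $CE^\ast(\ell_T, P_T; \widetilde{T^\ast \R^3_{x\geq 0}})$, up to quasi-isomorphism, with the free dg-algebra on the composable words of Reeb chords of action $<\mathfrak a$ of the form $\varSigma_T(h)\to\varSigma_T(h)$ or $\varSigma_T(h)\to\varSigma(\boldsymbol h)\to\cdots\to\varSigma(\boldsymbol h)\to\varSigma_T(h)$, with differential induced by the differential of the Chekanov--Eliashberg dg-algebra of $\varSigma_T(h)\cup\varSigma(\boldsymbol h)$ there; combining with the identification $\varSigma(\boldsymbol h) = \varSigma_H(h_H)$ and letting $\mathfrak a\to\infty$ gives (1). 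Equivalently, one may quote \cref{cor:quasi_iso_free_products}(1) with $(X_i,P_i,W_i,Q_i,V,\varLambda_i) = (\widetilde{T^\ast \R^3_{x\geq 0}}, P_T, \widetilde{V_T}, Q_T, T^\ast H, \ell_T)$ and rename its generators through the same correspondences.

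For part (2), \cref{lma:subalgebra_kch} identifies $KCC^\ast(\partial T, h_\partial)\subset KCC^\ast(T_i,h_i)$, up to canonical quasi-isomorphism, with $CE^\ast(\eval[0]{\partial\ell_T}_{ST^\ast H}, Q_T; T^\ast H)$, so it suffices to run the argument of (1) one dimension lower, with $(\widetilde{T^\ast \R^3_{x\geq 0}}, P_T)$ replaced by $(T^\ast H, Q_T)$: viewing $T^\ast H$ as $B^4$ stopped at the unknot, the hypersurface $Q_T\hookrightarrow\partial T^\ast H$ is itself a Weinstein connected sum of its subcritical part with the $T^\ast(\text{unknot})$-summand whose top core disk is $\partial\ell_H^1$, while $\eval[0]{\partial\ell_T}_{ST^\ast H}$ has $\partial\ell_{T_i}^H$ as its $\boldsymbol V$-completion; \cref{lma:surgery_desc_generators_relative} (or \cref{cor:quasi_iso_free_products}(1)) then gives the stated description. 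The main obstacle I anticipate is the step of (1) where $\varSigma_H(h_H)$, originally defined inside $\left(\widetilde{T^\ast \R^3_{x\geq 0}}\right)_{T^\ast H}$, is placed inside $\partial\left(\widetilde{T^\ast \R^3_{x\geq 0}}\right)_{P_{T,0}}$ with the same Reeb chords and the same moduli spaces --- that is, verifying that the subcritical pieces $\widetilde{V_T}$ and $Q_T$ contribute nothing below the action bound. This is exactly what the thin-simplicial-handle estimates (\cref{lma:one-to-one-corr_reeb_chords,lma:one_to_one_corr_reeb_chords_core_disks,lma:one_to_one_corr_generators}) and the holomorphic-curve comparison arguments of \cite{asplund2020chekanov,asplund2021simplicial,bourgeois2012effect} supply; the rest is an unravelling of notation.
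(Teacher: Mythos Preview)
Your proposal is correct and follows essentially the same route as the paper: the paper's own proof is the one-line ``This is a special case of \cref{lma:surgery_desc_generators_relative} and \cite[Theorem 4.6]{asplund2021simplicial}'', and you have unpacked exactly what that special case is. Your key observation---that $\widetilde{V_T}$ and $Q_T$ are subcritical, so the union of top attaching spheres $\varSigma(\boldsymbol h)$ of the $P_T$-handle reduces to $\varSigma_H(h_H)$---is precisely the content hidden in the paper's citation, and your use of \cref{lma:subalgebra_kch} for part~(2) is equivalent to the paper's direct appeal to \cite[Theorem 4.6]{asplund2021simplicial}, since the proof of \cref{lma:subalgebra_kch} already invokes that result.
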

		\begin{proof}
			\begin{enumerate}
				\item This follows from \cref{lma:quasi_iso_free_products}(1).
				\item This follows from \cref{lma:quasi_iso_free_products}(2).
			\end{enumerate}
		\end{proof}

		After gluing two oriented tangles $T_1,T_2 \subset \R^3_{x\geq 0}$ to an oriented link $K \subset \R^3$ we obtain the following description of $KCC^\ast(K,h)$ in terms of $KCC^\ast(T_i,h_i)$ and $KCC^\ast(\partial T, h_\partial)$. After gluing the two pairs $(\widetilde{T^\ast \R^3_{x\geq 0}}, P_{T_1})$ and $(\widetilde{T^\ast \R^3_{x\geq 0}}, P_{T_2})$ along their common Weinstein subhypersurface $T^\ast H$ we obtain the Weinstein pair $(T^\ast \R^3, P)$ where $P := \widetilde V_{T_1} \#_{Q_T} \widetilde V_{T_2}$ with notation as in \cref{sec:kch_tangles}. Let $\varSigma_P(h)$ denote the union of the top attaching spheres of the handle $T^\ast H \times T^\ast \varDelta^1$ used in the construction of the pair $(T^\ast \R^3, P)$. Let $\ell_K$ denote the result of gluing $\ell_{T_1}$ and $\ell_{T_2}$ along their common boundary in $\partial B^4$ over the handle $B^4 \times T^\ast \varDelta^1$. Furthermore let $\varSigma_P(h)$ and $\varSigma_K(h_1 \# h_2)$ denote the corresponding Legendrians after stopping $T^\ast \R^3$ at $P$.
		\begin{lma}\label{lma:gluing_presence_of_hypersurface_chords}
			Let $\mathcal K$ denote the dg-algebra generated by composable words of Reeb chords of the form $\varSigma_K(h_1 \# h_2) \to \varSigma_K(h_1\# h_2)$ and $\varSigma_K(h_1\# h_2) \to \varSigma_P(h) \to \cdots \to \varSigma_P(h) \to \varSigma_K(h_1\# h_2)$. For generic choices of metrics and almost complex structures $\mathcal K$ equipped with the differential of the Chekanov--Eliashberg dg-algebra of $\varSigma_K(h_1\# h_2) \cup \varSigma_P(h) \subset \partial (T^\ast \R^3)_{P_0}$ is a dg-algebra such that $KCC^\ast(T_1,h_1), KCC^\ast(T_2,h_2) \subset \mathcal K$ are dg-subalgebras. Moreover, there is a quasi-isomorphism of dg-algebras
			\[
				\mathcal K \cong KCC^\ast(K, h_1\# h_2).
			\]
		\end{lma}
		\begin{proof}
			This follows from \cref{lma:surgery_desc_generators_relative}(3).
		\end{proof}

		We proceed to give a topological description of some of the generators appearing in \cref{lma:kcc_tangle_with_hypersurface}.
		\begin{lma}\label{lma:tcc_generators_hypersurface}
			Let $T \subset \R^3_{x\geq 0}$ be a tangle and let $h$ be a handle decomposition of $N(\varLambda_T)$.
			\begin{enumerate}
				\item Reeb chords of the form $\ell_T \to \ell_T$ are in one-to-one correspondence with oriented binormal geodesic chords of $T \subset \R^3_{x\geq 0}$,
				\item Composable words of Reeb chords of the form $\partial \ell_T^H \to \partial \ell_T^H$ or $\partial \ell_T^H \to \partial \ell_H^1 \to \cdots \to \partial \ell_H^1 \to \partial \ell_T^H$ are in one-to-one correspondence with oriented binormal geodesic chords of $\partial T \subset H$,
				\item Reeb chords of the form $\ell_H \to \ell_T$ or $\ell_T \to \ell_H$ are in one-to-one correspondence with oriented binormal geodesic chords from $H$ to $T$ and $T$ to $H$ in $\R^3_{x \geq 0}$, respectively.
				\item Reeb chords of the form $\ell_H \to \ell_H$ are in one-to-one correspondence with oriented binormal geodesic $H$ in $\R^3_{\geq 0}$.
			\end{enumerate}
		\end{lma}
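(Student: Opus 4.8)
The plan is to reduce all four statements to the classical dictionary of Ekholm--Etnyre--Ng--Sullivan between Reeb chords of a unit conormal bundle and oriented binormal geodesic chords, applied in $\R^3_{x\geq 0}$ for the submanifolds $T$ and $H$, and in $H$ for the point-set $\partial T$. Recall that for the contact form $\alpha$ the Reeb flow on $ST^\ast \R^3_{x\geq 0}$ is the (co)geodesic flow of the chosen metric, and likewise on $ST^\ast H$ it is the (co)geodesic flow of the restricted metric; see \cite{ekholm2013knot}. Under this identification a Reeb chord of the unit conormal bundle $\varLambda_N$ of a submanifold $N$ is precisely a geodesic segment meeting $N$ orthogonally at both endpoints, with the Reeb direction providing the orientation, so that a binormal chord and its reverse give distinct Reeb chords; by \cref{asm:orthogonality_chords} these are isolated. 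A mixed Reeb chord from $\varLambda_{N_1}$ to $\varLambda_{N_2}$ is, in the same way, an oriented geodesic segment from $N_1$ to $N_2$ which is orthogonal to $N_1$ at its initial point and to $N_2$ at its terminal point.

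Next I would identify the core disks in the statement with the relevant conormal bundles. Since $N(\varLambda_T) \cong D_\varepsilon T^\ast \varLambda_T$ is a Weinstein cotangent neighborhood and $h$ is induced by a Morse function on $\varLambda_T$ regarded as a manifold-with-boundary, the union $\ell_T$ of core disks of the top handles, trivially extended over the subcritical part $V_T$ as in \cref{sec:relative_legendrians}, is Legendrian isotopic inside $ST^\ast\R^3_{x\geq 0}$ to $\varLambda_T$; shrinking $\varepsilon$ as in \cref{lma:one-to-one-corr_reeb_chords} makes the Reeb chords of $\ell_T$ below any fixed action bound correspond grading-preservingly to those of $\varLambda_T$. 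Similarly $\ell_H$, the core of the single top handle of $T^\ast H \cong T^\ast\R^2$ viewed as $B^4$ stopped at the unknot, is Legendrian isotopic in $ST^\ast\R^3_{x\geq 0}$ to the inward sheet of the unit conormal bundle $\varLambda_H$ of $H$; in the glued picture the two copies of $\R^3_{x\geq 0}$ supply the two sheets of $\varLambda_H$, which accounts for the clause ``in either of the two copies of $\R^3_{x\geq 0}$''. Finally $\partial\ell_T^H = \partial\ell_T \cap B^4$ and $\partial\ell_H^1$ are, inside $ST^\ast H$, Legendrian isotopic to the unit conormal of $\partial T \subset H$ and to a handle-core model of the conormal of the unknot in $H$, respectively. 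With these identifications, items (1), (3) and (4) are immediate instances of the conormal/binormal dictionary above: $\ell_T\to\ell_T$ yields binormal chords of $T$ in $\R^3_{x\geq 0}$; $\ell_H\to\ell_T$ and $\ell_T\to\ell_H$ yield oriented geodesic chords from $H$ to $T$ and from $T$ to $H$; and $\ell_H\to\ell_H$ yields binormal chords of $H$ in $\R^3_{x\geq 0}$.

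For item (2) the extra ingredient is that the generators are \emph{composable words} running through the one-dimensional core disk $\partial\ell_H^1$ of the auxiliary stop on the unknot in $T^\ast H$. Here I would invoke \cref{lma:one-to-one-corr_reeb_chords}: below the action bound and for sufficiently thin simplicial handles, filling in that stop converts a word $\partial\ell_T^H \to \partial\ell_H^1 \to \cdots \to \partial\ell_H^1 \to \partial\ell_T^H$, as well as the length-one word $\partial\ell_T^H \to \partial\ell_T^H$, into a single Reeb chord of $\eval[0]{\partial\ell_T}_{ST^\ast H}$ in $ST^\ast H$. Applying the conormal/binormal dictionary in $H$ then identifies these with oriented binormal geodesic chords of $\partial T$ inside $H$, which is the assertion; alternatively one reads this off the colimit description of $KCC^\ast(\partial T,h_\partial)$ obtained in the proof of \cref{lma:subalgebra_kch}.

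The only delicate points are the Legendrian isotopies identifying each handle-core disk with the corresponding conormal bundle --- in particular recognizing $\ell_H$ as the inward sheet of $\varLambda_H$ and tracking the two copies of $\R^3_{x\geq 0}$ --- and the use of \cref{lma:one-to-one-corr_reeb_chords} to collapse composable words through the stop. Both follow templates already set up in \cref{sec:simplicial_decomps_for_pairs} and in \cite{asplund2020chekanov,asplund2021simplicial}, so the proof itself should be short.
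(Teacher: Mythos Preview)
Your approach is essentially the same as the paper's: items (1), (3), (4) are handled via the Reeb flow $=$ cogeodesic flow identification on $ST^\ast \R^3_{x\geq 0}$, with the extra observation that $\ell_H$ is the Legendrian lift of $H$ after convex completion (you phrase this as the ``inward sheet of $\varLambda_H$'', which is the same thing), and item (2) is reduced to the surgery dictionary for words through the stop.

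One small correction on item (2): the lemma you want to invoke is not \cref{lma:one-to-one-corr_reeb_chords}. That lemma gives the correspondence between Reeb chords of $\varSigma(h)$ and Reeb chords of $\ell$ together with $\partial\ell$; it does \emph{not} turn composable words through $\partial\ell_H^1$ into single Reeb chords. The word-to-chord correspondence you describe is the Bourgeois--Ekholm--Eliashberg surgery description, which in this paper appears as \cref{lma:surgery_desc_generators_relative} (and in \cite{asplund2020chekanov} as Proposition~6.2). Your alternative route via the colimit description in the proof of \cref{lma:subalgebra_kch} is fine and amounts to the same thing. With that citation fixed, the argument is correct and matches the paper.
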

		\begin{proof}
			The crucial point for the proof is to understand the Reeb dynamics near $T^\ast H \subset \partial \widetilde{T^\ast \R^3_{x\geq 0}}$ after taking Weinstein connected sum with $\R \times (T^\ast H \times \R)$ as described in \cref{sec:relative_legendrians}, in accordance with \cref{dfn:tch}. The hypersurface $H$ in $\partial \widetilde{T^\ast \R^3_{x\geq 0}}$ is the Legendrian lift of the boundary of $\R^3_{x\geq 0}$.

			Trajectories of the Reeb vector field that enters the surgery region near $T^\ast H \subset \partial \widetilde{T^\ast \R^3_{x\geq 0}}$ and leaves the surgery region, continuing with the flow in $\partial \widetilde{T^\ast \R^3_{x\geq 0}}$ are described as in \cite[Section B.3]{ekholm2017duality} and \cite[Appendix A]{asplund2019fiber}, after choosing a collar neighborhood compatible metric.

			More concretely, in terms of the geometry of $\R^3_{x\geq 0}$, we pick a warped product metric as in \cite[(A.1)]{asplund2019fiber}, on a collar neighborhood of $\partial \R^3_{x\geq 0}$. Any trajectory of the geodesic flow that enters the collar neighborhood will follow a path described as the product of the geodesic flow in $\partial \R^3_{x\geq 0}$, and a parabola with a maximum in the $\R$-coordinate of the collar neighborhood coordinates, which corresponds to the normal component of the trajectory of the geodesic flow in question. The maximum is finite if and only if the tangential component of the geodesic trajectory is non-zero.
			\begin{enumerate}
				\item The Reeb flow in $ST^\ast \R^3_{x \geq 0}$ away from $x = 0$ is the lift of the geodesic flow in $\R^3_{x\geq 0}$, so any oriented binormal geodesic chord of $T \subset \R^3_{x\geq 0}$ that avoids a neighborhood of $H$ corresponds to a Reeb chord of the form $\ell_T \to \ell_T$. We furthermore claim that no Reeb chord $\ell_T \to \ell_T$ enters the surgery region near $T^\ast H$. If any geodesic trajectory starting at $T$ enters the collar neighborhood of $H$ described above orthogonally to $H$, then the geodesic flow will continue up until the boundary and will not return to $T$. If the geodesic trajectory enters non-orthogonally, then it will leave the collar neighborhood non-orthogonally at a certain angle. Possibly after an arbitrarily small perturbation of the metric, the geodesic trajectory will not hit $T$ orthogonally when it returns, by general position.
				\item This follows from \cref{lma:surgery_desc_generators_relative}, and in particular \cite[Proposition 6.2]{asplund2020chekanov}.
				\item This follows from the fact that the Reeb flow in $ST^\ast \R^3_{x \geq 0}$ away from $x = 0$ corresponds to geodesic flow in $\R^3_{x\geq 0}$ and by the same argument as in item (1).
			\end{enumerate}
		\end{proof}
		Let $\mathcal G_{\partial H} = CE^\ast(\partial \ell_H, (B^4)_{D^2})$, see \cref{fig:core_disk_h}. We let $\mathcal G_{T\to H}$ ($\mathcal G_{H\to T}$) denote the right (left) module over the algebra $\mathcal G_{\partial H}$ generated by Reeb chords from $\ell_T$ to $\ell_H$ and $\partial \ell_T^H$ to $\partial \ell_H^1$ ($\ell_H$ to $\ell_T$ and $\partial \ell_H^1$ to $\partial \ell_T^H$). Similarly, let $\mathcal G_{T\to T}$ denote the set of Reeb chords of $\ell_T \subset \partial \widetilde T^\ast \R^3_{x\geq 0}$. We define
		\begin{equation}\label{eq:tensor_module}
			\mathcal G_{T \to H \to T} := \mathcal G_{T\to H} \otimes_{\mathcal G_{\partial H}} \mathcal G_{H\to T},
		\end{equation}
		as modules. In the following we will only consider the underlying sets of $\mathcal G_{T\to T}$, $\mathcal G_{T\to H}$, $\mathcal G_{H\to T}$ and $\mathcal G_{T\to H \to T}$.
		\begin{cor}\label{cor:top_desc_of_generators_tch}
			Let $T \subset \R^3_{x\geq 0}$ be a tangle and let $h$ be a handle decomposition of $N(\varLambda_T)$.
			\begin{enumerate}
				\item Elements of $\mathcal G_{T\to T}$ are in one-to-one correspondence with oriented binormal geodesic chords of $T \subset \R^3_{x\geq 0}$.
				\item Elements of $\mathcal G_{T\to H \to T}$ are in one-to-one correspondence with oriented binormal geodesic chords of $\partial T \subset H$ and words of oriented binormal geodesic chords in $\R^3_{x\geq 0}$ of the form $T \to H \to \cdots \to H \to T$.
			\end{enumerate}
		\end{cor}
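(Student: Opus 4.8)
The plan is to deduce both statements directly from \cref{lma:tcc_generators_hypersurface} together with the definitions of the modules $\mathcal{G}_{T\to T}$, $\mathcal{G}_{T\to H}$, $\mathcal{G}_{H\to T}$, $\mathcal{G}_{\partial H}$ and the tensor product \eqref{eq:tensor_module}. Item (1) is essentially a restatement: by definition $\mathcal{G}_{T\to T}$ is the set of Reeb chords of $\ell_T \subset \partial\widetilde{T^\ast\R^3_{x\geq 0}}$, i.e.\ the Reeb chords of the form $\ell_T \to \ell_T$ in the unstopped contact boundary, and \cref{lma:tcc_generators_hypersurface}(1) identifies these with oriented binormal geodesic chords of $T \subset \R^3_{x\geq 0}$, using that the Reeb flow on $ST^\ast\R^3_{x\geq 0}$ away from $x=0$ is the geodesic flow.

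For item (2) I would first unwind \eqref{eq:tensor_module} on the level of underlying sets. Since $\mathcal{G}_{T\to H}$ is the right $\mathcal{G}_{\partial H}$-module freely generated by the Reeb chords $\ell_T \to \ell_H$ and $\partial\ell_T^H \to \partial\ell_H^1$, and $\mathcal{G}_{H\to T}$ is the left $\mathcal{G}_{\partial H}$-module freely generated by the Reeb chords $\ell_H \to \ell_T$ and $\partial\ell_H^1 \to \partial\ell_T^H$, the set underlying $\mathcal{G}_{T\to H\to T} = \mathcal{G}_{T\to H}\otimes_{\mathcal{G}_{\partial H}}\mathcal{G}_{H\to T}$ is the set of composable words $s\, g\, t$, where $s$ is one of the listed generators of $\mathcal{G}_{T\to H}$, $t$ one of those of $\mathcal{G}_{H\to T}$, and $g$ is a basis element of $\mathcal{G}_{\partial H}$. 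Next I would recall the stopped-manifold description of $\mathcal{G}_{\partial H} = CE^\ast(\partial\ell_H,(B^4)_{D^2})$ (the same surgery description as in the proof of \cref{lma:kcc_tangle_with_hypersurface}, cf.\ \cite[Proposition~6.2]{asplund2020chekanov}): its basis elements are composable words, possibly of length zero, of Reeb chords of the forms $\ell_H \to \ell_H$ and $\partial\ell_H^1 \to \partial\ell_H^1$. Matching source and target idempotents along $s\, g\, t$ then leaves exactly two cases: either all chords in the word lie among the interior disks, so that $s\, g\, t$ is a composable word $\ell_T \to \ell_H \to \cdots \to \ell_H \to \ell_T$, or all chords are boundary chords, so that $s\, g\, t$ is a composable word $\partial\ell_T^H \to \partial\ell_H^1 \to \cdots \to \partial\ell_H^1 \to \partial\ell_T^H$.

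It then remains to translate each of the two families geometrically, again by \cref{lma:tcc_generators_hypersurface}: parts (3) and (4) give a bijection between composable words $\ell_T \to \ell_H \to \cdots \to \ell_H \to \ell_T$ and words of oriented binormal geodesic chords $T \to H \to \cdots \to H \to T$ in $\R^3_{x\geq 0}$, and part (2) gives a bijection between composable words $\partial\ell_T^H \to \partial\ell_H^1 \to \cdots \to \partial\ell_H^1 \to \partial\ell_T^H$ and oriented binormal geodesic chords of $\partial T \subset H$; taking the union of the two bijections yields the statement. The step I expect to require the most care is the bookkeeping in the tensor product: one must verify, from the precise generating set of $\mathcal{G}_{\partial H}$ and the idempotent structure, that no word interleaving interior and boundary chords can occur, and that the length-zero elements of $\mathcal{G}_{\partial H}$ are exactly what produce the length-two words $T \to H \to T$. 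Throughout, \cref{asm:orthogonality_chords} is what guarantees that all of these chords are isolated, so that the various sets being compared are finite and the correspondences are genuinely well defined.
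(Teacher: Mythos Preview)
Your approach matches the paper's: item (1) is handled by direct appeal to \cref{lma:tcc_generators_hypersurface}(1), and for item (2) you unwind the tensor product \eqref{eq:tensor_module}, identify the two families of words (all interior or all boundary), and then translate them via parts (2)--(4) of the same lemma, exactly as the paper does.

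The one substantive difference lies in how you justify that no word interleaves interior and boundary chords. You argue via matching source and target idempotents, but it is not clear this suffices: the interior disk $\ell_H$ and the boundary arc $\partial\ell_H^1$ are glued into the \emph{same} connected component of the attaching sphere $\varSigma_H(h_H)$, so idempotent bookkeeping alone does not obviously separate them. The paper instead gives a direct geometric reason: there are simply no Reeb chords of the form $\partial\ell_H^1 \to \ell_H$, by the structure of the Reeb flow in the standard building block $V \times T^\ast\varDelta^1$ (with reference to \cite[Section~2.2]{asplund2020chekanov} and \cite[Section~2.1]{asplund2021simplicial}). You rightly flag this as the step needing the most care; replacing the idempotent argument by this Reeb-dynamical observation completes your proof and brings it into line with the paper's.
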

		\begin{proof}
			This follows by combining \cref{lma:tcc_generators_hypersurface} and \cref{lma:quasi_iso_free_products}.
		\end{proof}
		\begin{thm}\label{thm:top_desc_tch}
			Let $T \subset \R^3_{x\geq 0}$ be an oriented tangle. Let $h$ be a choice of handle decomposition of $N(\varLambda_T)$. For generic choices of metrics and almost complex structures we have a quasi-isomorphism of dg-algebras
			\[
				KCC^\ast(T,h) \cong \left\langle \mathcal G_{T\to T}\right\rangle \ast \left\langle \mathcal G_{T\to H \to T}\right\rangle.
			\]
		\end{thm}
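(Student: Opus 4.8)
The plan is to recognize $KCC^\ast(T,h)$ as a direct instance of the relative gluing decomposition \cref{cor:quasi_iso_free_products}(1) and then translate the generating sets appearing there into the oriented binormal geodesic chords packaged by $\mathcal G_{T\to T}$ and $\mathcal G_{T\to H\to T}$. By definition $KCC^\ast(T,h) = CE^\ast(\ell_T,P_T;\widetilde{T^\ast \R^3_{x\geq 0}})$, and by the construction in \cref{sec:kch_tangles} the Weinstein hypersurface $P_T = \widetilde{V_T}\#_{Q_T}T^\ast H$ has the shape $W\#_Q V$ with $W := \widetilde{V_T}$ and $Q := Q_T = V_T\cap ST^\ast H$ both subcritical (the former by definition as the subcritical part of $N(\varLambda_T)$, the latter as noted in the proof of \cref{lma:subalgebra_kch}). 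Thus, setting $X := \widetilde{T^\ast \R^3_{x\geq 0}}$, $V := T^\ast H$ and $\varLambda := \ell_T$, the hypotheses preceding \cref{lma:alt_desc_dg-algebra} hold, and \cref{cor:quasi_iso_free_products}(1) — whose concrete tangle form is already recorded as \cref{lma:kcc_tangle_with_hypersurface}(1) together with the reorganization in \cref{lma:alt_desc_dg-algebra} — yields a quasi-isomorphism of dg-algebras
\[
KCC^\ast(T,h) \cong \left\langle \mathcal A_{T\to T}\right\rangle \ast \left\langle \mathcal A_{T\to H\to T}\right\rangle,
\]
where $\mathcal A_{T\to T}$ is generated by the Reeb chords $\ell_T\to\ell_T$ in $\partial \widetilde{T^\ast \R^3_{x\geq 0}}$ and $\partial \ell_T\to\partial \ell_T$ in $\partial P_T$, where $\mathcal A_{T\to H\to T} = \mathcal A_{T\to H}\otimes_{CE^\ast((T^\ast H,h_H);\widetilde{T^\ast \R^3_{x\geq 0}})}\mathcal A_{H\to T}$, and where the differential on the right-hand side is induced from the Chekanov--Eliashberg differential of $\varSigma_T(h)\cup\varSigma_H(h_H)$.

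It then remains to identify $\mathcal A_{T\to T}$ with $\mathcal G_{T\to T}$ and $\mathcal A_{T\to H\to T}$ with $\mathcal G_{T\to H\to T}$ as sets; since a free dg-algebra on a disjoint union of generating sets is the free product of the free dg-algebras on the pieces, this gives the theorem. The chords $\ell_T\to\ell_T$ in $\partial \widetilde{T^\ast \R^3_{x\geq 0}}$ are exactly $\mathcal G_{T\to T}$ by definition and \cref{lma:tcc_generators_hypersurface}(1). For the chords $\partial \ell_T\to\partial \ell_T$ in $\partial P_T$ I would invoke \cref{lma:subalgebra_kch}: since $\widetilde{V_T}$ and $Q_T$ are subcritical, $CE^\ast(\partial \ell_T;(P_T)_0)$ further reduces so that these chords are reorganized into composable words $\partial \ell_T^H\to\partial \ell_H^1\to\cdots\to\partial \ell_H^1\to\partial \ell_T^H$ (cf.\ \cref{lma:kcc_tangle_with_hypersurface}(2) and \cite[Proposition 6.2]{asplund2020chekanov}), hence are absorbed into $\mathcal G_{T\to H\to T}$. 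On the module side, \cref{lma:dg-subalgebra} lets me replace the coefficient algebra $CE^\ast((T^\ast H,h_H);\widetilde{T^\ast \R^3_{x\geq 0}})$ by its canonically quasi-isomorphic dg-subalgebra $\mathcal G_{\partial H} = CE^\ast(\partial \ell_H,(B^4)_{D^2})$ — its extra generators pairing trivially with the module generators, again by subcriticality of $T^\ast H\cong B^4$ — after which $\mathcal A_{T\to H\to T}$ becomes precisely the tensor module \eqref{eq:tensor_module}, that is $\mathcal G_{T\to H\to T}$. Since every intermediate quasi-isomorphism is the identity on the identified generating sets and preserves the differential, the induced differential on $\left\langle \mathcal G_{T\to T}\right\rangle \ast \left\langle \mathcal G_{T\to H\to T}\right\rangle$ is the asserted one.

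The main obstacle is this last bookkeeping step: confirming that the boundary Reeb chords $\partial \ell_T\to\partial \ell_T$ produced by the naive application of \cref{cor:quasi_iso_free_products}(1) land in $\mathcal G_{T\to H\to T}$ rather than in $\mathcal G_{T\to T}$, and that the tensor module \eqref{eq:tensor_module} over $\mathcal G_{\partial H}$ genuinely matches the $\mathcal A_{T\to H}\otimes\mathcal A_{H\to T}$ structure over the larger dg-algebra $CE^\ast((T^\ast H,h_H);\widetilde{T^\ast \R^3_{x\geq 0}})$. Both reduce to the subcriticality of $\widetilde{V_T}$, $Q_T$ and $T^\ast H$ together with the colimit description of \cref{lma:subalgebra_kch} and the handle-cancellation arguments of \cite{asplund2020chekanov,asplund2021simplicial}, but spelling out the dictionary carefully is where the work lies.
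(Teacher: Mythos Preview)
Your proposal is correct and follows the same skeleton as the paper's proof: both invoke \cref{cor:quasi_iso_free_products}(1) (equivalently \cref{lma:kcc_tangle_with_hypersurface}(1)) as the underlying principle and then match the abstract generating sets $\mathcal A_{T\to T}$, $\mathcal A_{T\to H\to T}$ against the concrete sets $\mathcal G_{T\to T}$, $\mathcal G_{T\to H\to T}$. You also correctly isolate the one nontrivial point --- that the $\partial \ell_T \to \partial \ell_T$ chords, which a na\"ive reading of $\mathcal A_{i\to i}$ places alongside the $\ell_T\to\ell_T$ chords, must instead end up on the $\mathcal G_{T\to H\to T}$ side.

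The difference is in how that redistribution is justified. You argue via subcriticality of $\widetilde{V_T}$ and $Q_T$ together with the colimit description in \cref{lma:subalgebra_kch} and \cref{lma:kcc_tangle_with_hypersurface}(2), so that the boundary dg-subalgebra reorganizes into words through $\partial\ell_H^1$. The paper instead makes a single Reeb-dynamics observation in the standard building block $P_T \times T^\ast\varDelta^1$: there are no Reeb chords from $\partial\ell_T\times\varDelta^1$ back to $\ell_T$. This immediately forces any composable word of $\varSigma_T(h)\cup\varSigma_H(h_H)$ to be either purely ``interior'' ($\ell_T\to\ell_H\to\cdots\to\ell_H\to\ell_T$) or purely ``boundary'' ($\partial\ell_T^H\to\partial\ell_H^1\to\cdots\to\partial\ell_H^1\to\partial\ell_T^H$), which is exactly the dichotomy defining $\mathcal G_{T\to T}\cup\mathcal G_{T\to H\to T}$. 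Your route gets there, but the paper's argument is shorter and avoids the module-matching you flag as the obstacle; it is worth knowing that the obstacle dissolves once one looks at the Reeb flow in the handle rather than at algebraic cancellations.
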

		\begin{proof}
			This is obtained by combining \cref{cor:top_desc_of_generators_tch} and \cref{lma:quasi_iso_free_products}(1).
		\end{proof}
		\begin{rmk}
			Neither $\mathcal G_{T\to T}$ nor $\mathcal G_{T\to H\to T}$ become dg-algebras when equipped with the differential used in $KCC^\ast(T,h)$, however their free product does, cf.\@ \cref{rmk:quasi_iso_free_prods}(1).
		\end{rmk}
		Now let $T_1$ and $T_2$ be two oriented tangles in $\R^3_{x\geq 0}$. As the discussion surrounding \eqref{eq:tensor_module} we define $\mathcal G_{T_i \to T_i}$, $\mathcal G_{T_i\to H \to T_i}$ in the same way for $i\in \left\{1,2\right\}$. Additionally we define $\mathcal G_H$ to denote the set of Reeb chords of $\ell_H$ in either one of the two copies of $\widetilde{T^\ast \R^3_{x\geq 0}}$. Let $\mathcal G_{T_i \to H}$ (and $\mathcal G_{H\to T_j}$) be the right (left) module over the free algebra on $\mathcal G_H$ generated by Reeb chords from $\ell_{T_i}$ to $\ell_H$ ($\ell_H$ to $\ell_{T_j}$). Then define
		\begin{equation}\label{eq:tensor_module_2}
			\mathcal G_{T_i \to H \to T_j} := \mathcal G_{T_i \to H} \otimes_{\mathcal G_H} \mathcal G_{H\to T_j},
		\end{equation}
		for $i\neq j\in \left\{1,2\right\}$.
		\begin{lma}\label{lma:top_desc_gen_gluing}
			Let $T_1\subset (\R^3_{x\geq 0})_1$ and $T_2 \subset (\R^3_{x\geq 0})_2$ be two oriented tangles. Elements of $\mathcal G_{T_i\to H \to T_j}$ for $i\neq j\in \left\{1,2\right\}$ are in one-to-one correspondence with words of oriented binormal geodesic chords in of the form $T_i \to H \to \cdots \to H \to T_j$, where ``$H \to H$'' is taken to mean an oriented binormal geodesic chord of $H$ in either of the two copies of $\R^3_{x\geq 0}$.
		\end{lma}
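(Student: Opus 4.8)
The plan is to unravel the purely set-theoretic content of the definition \eqref{eq:tensor_module_2} of $\mathcal G_{T_i \to H \to T_j}$ and then translate it chord-by-chord into geometry via \cref{lma:tcc_generators_hypersurface}. This is the direct analogue, for the cross term $i \neq j$, of the description of $\mathcal G_{T_i \to H \to T_i}$ in the corollary immediately preceding \cref{thm:top_desc_tch}; the only structural difference is that here $\mathcal G_{T_i \to H \to T_j}$ is built as a module over $\left\langle \mathcal G_H\right\rangle$ (Reeb chords of $\ell_H$ only) rather than over $\mathcal G_{\partial H}$, so no ``boundary'' Reeb chords $\partial\ell_H^1$ enter and only interior words survive — which is exactly why the target consists only of words $T_i \to H \to \cdots \to H \to T_j$ and not also chords $\partial T_i \to \partial T_j$.

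First I would spell out $\mathcal G_{T_i \to H \to T_j}$ as a set. Since $\overline{\mathcal G}_{T_i \to H}$ is the free right $\left\langle \mathcal G_H\right\rangle$-module generated by the Reeb chords $\ell_{T_i}\to\ell_H$ and $\overline{\mathcal G}_{H\to T_j}$ is the free left $\left\langle \mathcal G_H\right\rangle$-module generated by the Reeb chords $\ell_H\to\ell_{T_j}$, the tensor product over $\left\langle \mathcal G_H\right\rangle$ consists, as a set, of the composable words
\[
	a\, c_1 \cdots c_k\, b, \qquad a\colon \ell_{T_i} \to \ell_H, \quad c_1, \ldots, c_k \in \mathcal G_H, \quad b\colon \ell_H \to \ell_{T_j},
\]
with $k \geq 0$. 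By construction $a$ is a Reeb chord in the $i$-th copy of $\widetilde{T^\ast \R^3_{x\geq 0}}$, $b$ is one in the $j$-th copy, and, by the definition of $\mathcal G_H$, each $c_\ell$ is a Reeb chord of $\ell_H$ living in one or the other of the two copies. (That these are all the words is immediate from the definition of the tensor product of free modules, and is compatible with the Reeb dynamics in the standard building blocks exactly as in the proof of \cref{thm:top_desc_tch}.)

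Next I would apply \cref{lma:tcc_generators_hypersurface}: item (3) identifies $a$ with an oriented binormal geodesic chord from $T_i$ to $H$ in the $i$-th copy of $\R^3_{x\geq 0}$ and $b$ with one from $H$ to $T_j$ in the $j$-th copy, while item (4) identifies each $c_\ell$ with an oriented binormal geodesic chord of $H$ in whichever copy of $\R^3_{x\geq 0}$ it lives. It then remains to match composability: since $H$ serves as the front projection of $\ell_H$, the bijections of \cref{lma:tcc_generators_hypersurface} carry the endpoint on $\ell_H$ of a Reeb chord to the corresponding endpoint on $H$ of the associated geodesic chord, so two consecutive chords in $a c_1 \cdots c_k b$ share an endpoint on $\ell_H$ precisely when the associated geodesic chords share an endpoint on $H$. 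Hence $a c_1 \cdots c_k b$ corresponds bijectively to a word of oriented binormal geodesic chords of the form $T_i \to H \to \cdots \to H \to T_j$ in which each intermediate ``$H \to H$'' is a chord of $H$ in either of the two copies of $\R^3_{x\geq 0}$, which is the assertion. The argument is essentially bookkeeping; the only point requiring care is this last matching of composability together with keeping straight that the intermediate chords range over both copies of $\R^3_{x\geq 0}$ — that is precisely the freedom encoded by taking Reeb chords of $\ell_H$ ``in either copy'' in the definition of $\mathcal G_H$.
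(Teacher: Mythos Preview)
Your proof is correct and follows essentially the same approach as the paper: unpack the definition \eqref{eq:tensor_module_2} into words of Reeb chords and then translate via \cref{lma:tcc_generators_hypersurface}. The paper's own proof is a one-line citation of \cref{lma:gluing_presence_of_hypersurface_chords} together with item (3) of \cref{lma:tcc_generators_hypersurface}; your version is simply a more explicit unpacking of the same ingredients (and you correctly also invoke item (4) for the intermediate $H\to H$ chords, which the paper's terse citation leaves implicit). One small remark: ``composability'' here only means that consecutive letters go between the correct Legendrian components, not that they share a literal endpoint on $\ell_H$, so your last paragraph is slightly more than is needed --- but this does not affect the argument.
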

		\begin{proof}
			This follows from \cref{lma:gluing_presence_of_hypersurface_chords} in combination with \cref{lma:tcc_generators_hypersurface}(3).
		\end{proof}
		\begin{thm}[\cref{thm:intro_gluing_kch}]\label{thm:gluing_free_product}
			Let $T_1$ and $T_2$ be two tangles in $\R^3_{x\geq 0}$ whose gluing is the link $K \subset \R^3$, where $H := \partial \R^3_{x\geq 0}$. Let $h_1$ and $h_2$ be choices of handle decomposition of $N(\varLambda_{T_1})$ and $N(\varLambda_{T_2})$, respectively. Then for generic choices of metrics and almost complex structures we have a quasi-isomorphism of dg-algebras
			\[
				KCC^\ast(K, h_1 \# h_2) \cong KCC^\ast(T_1,h_1) \ast_{KCC^\ast(\partial T, h_{\partial})} KCC^\ast(T_2,h_2) \ast \left\langle \mathcal G_{T_1 \to H \to T_2}\right\rangle \ast \left\langle \mathcal G_{T_2 \to H \to T_1}\right\rangle,
			\]
			where the right hand side is equipped with the same differential as in $KCC^\ast(K,h_1\#h_2)$.
		\end{thm}
		\begin{proof}
			This follows by combining \cref{lma:top_desc_gen_gluing} and \cref{lma:quasi_iso_free_products}(3). Let us spell out the details. By \cref{lma:gluing_presence_of_hypersurface_chords} and the same discussion regarding Reeb dynamics as in the proof of \cref{thm:top_desc_tch} we see that 
			\begin{align*}
				KCC^\ast(K, h_1 \# h_2) &\cong \left\langle \mathcal G_{T_1 \to T_1} \cup \mathcal G_{T_2 \to T_2} \cup \mathcal G_{T_1 \to H \to T_1} \cup \mathcal G_{T_1 \to H \to T_2} \cup \mathcal G_{T_2 \to H \to T_1} \cup \mathcal G_{T_2 \to H \to T_2} \right\rangle \\
				&= \left\langle \mathcal G_{T_1\to T_1} \cup \mathcal G_{T_1\to H\to T_1} \cup \mathcal G_{T_2\to T_2} \cup \mathcal G_{T_2\to H \to T_2}\right\rangle \ast \left\langle \mathcal G_{T_1 \to H \to T_2}\right\rangle \ast \left\langle \mathcal G_{T_2 \to H \to T_1}\right\rangle.
			\end{align*}
			Now $\mathcal G_{T_1\to H \to T_1}$ and $\mathcal G_{T_1\to H \to T_1}$ has a common subset consisting of words of oriented binormal geodesic chords of $\partial T \subset H$. The dg-algebra generated by such is quasi-isomorphic with $KCC^\ast(\partial T, h_\partial)$ when equipped with the differential of $KCC^\ast(K,h_1\#h_2)$ by \cref{lma:kcc_tangle_with_hypersurface}. Hence
			\begin{align*}
				&\left\langle \mathcal G_{T_1\to T_1} \cup \mathcal G_{T_1\to H\to T_1} \cup \mathcal G_{T_2\to T_2} \cup \mathcal G_{T_2\to H \to T_2}\right\rangle \\
				&\qquad \cong \left\langle \mathcal G_{T_1\to T_1} \cup \mathcal G_{T_1\to H\to T_1}\right\rangle \ast_{KCC^\ast(\partial T, h_\partial)} \left\langle \mathcal G_{T_2\to T_2} \cup \mathcal G_{T_2\to H\to T_2}\right\rangle\\
				&\qquad \overset{\text{\cref{thm:top_desc_tch}}}{\cong}KCC^\ast(T_1,h_1) \ast_{KCC^\ast(\partial T, h_\partial)} KCC^\ast(T_2,h_2),
			\end{align*}
			and the result follows.
		\end{proof}
		\begin{rmk}
			We remark again that neither $\mathcal G_{T_1 \to H \to T_2}$ nor $\mathcal G_{T_2\to H\to T_1}$ become dg-algebras when equipped with the differential in $KCC^\ast(K, h_1\# h_2)$, cf.\@ item(1) of \cref{rmk:quasi_iso_free_prods}.
		\end{rmk}
	\subsubsection{Handle decompositions recovering the Ekholm--Etnyre--Ng--Sullivan knot contact homology}\label{sec:explicit_handle_decomp}
		Let $T_1$ and $T_2$ be two tangles in $\R^3_{x\geq 0}$ whose gluing is the knot $K \subset \R^3$. Using the gluing formula \cref{lma:gluing_presence_of_hypersurface_chords} we can recover $KCC^\ast(K,h_1 \# h_2)$. The purpose of this section is to show that we can choose $h_1$ and $h_2$ so that $h_1 \# h_2$ has a single top handle and hence so that the assumptions of \cref{thm:kch_and_kch_eens_single_top_handle} hold.
		\begin{figure}[!htb]
			\centering
			\raisebox{5ex}{\includegraphics{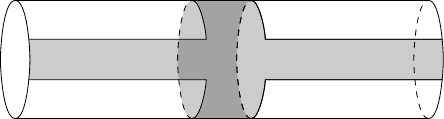}}
			\includegraphics{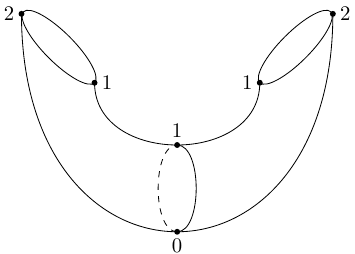}
			\caption{Left: The subcritical part of the handle decomposition of $N(\varLambda_{T_1})$. Right: Morse function with critical points and indices labeled inducing a handle decomposition of $N(\varLambda_{T_1})$.}\label{fig:handle_decomp1}
		\end{figure}
		\begin{figure}[!htb]
			\centering
			\raisebox{3ex}{\includegraphics{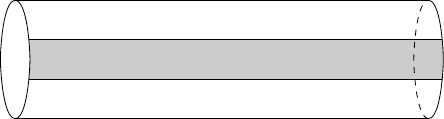}}
			\includegraphics{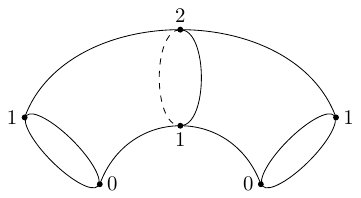}
			\caption{Left: The subcritical part of the handle decomposition of $N(\varLambda_{T_2})$. Right: Morse function with critical points and indices labeled inducing a handle decomposition of $N(\varLambda_{T_2})$.}\label{fig:handle_decomp2}
		\end{figure}

		Let $h_1$ be the handle decomposition induced by a Morse function on $\varLambda_{T_1} \cong S^1 \times I$ with two interior critical points of indices $0$ and $1$ respectively, and two boundary-unstable critical points of indices $1$ and $2$ respectively, on each of the two components of $\partial \varLambda_{T_1}$, see \cref{fig:handle_decomp1}.

		Let $h_2$ be the handle decomposition induced by a Morse function on $\varLambda_{T_2} \cong S^1 \times I$ with two interior critical points of indices $1$ and $2$ respectively, and two boundary-stable critical points of indices $0$ and $1$ respectively, on each of the two components of $\partial \varLambda_{T_2}$, see \cref{fig:handle_decomp2}.
	\subsection{Gluing formula recovering tangle contact homology}
		From now on we assume that we have an oriented tangle $T \subset \R^3_{x\geq 0}$ which can be further decomposed into an oriented tangle $T_2 \subset \R^3_{x\geq 1}$ and an oriented tangle $T_1 \subset \R^2 \times [0,1]$ with $\partial T_1 = T_2|_{\R^2 \times \left\{1\right\}}$ and such that they glue up to the oriented tangle $T$. We call such $T_1$ and $T_2$ \emph{gluing compatible}. In this section we prove a gluing formula for the tangle contact homology of $T$ in terms of the tangle contact homologies of $T_1$ and $T_2$ (and oriented binormal geodesic chords between tangles and hypersurfaces). As this section closely follows the same ideas as \cref{sec:gluing_formula_kch} we are more brief in our description of the setup.

		Consider a tangle $T \subset \R^2 \times [0,1]$. We will use the notation $H := \R^2 \times \left\{0\right\}$, $H' := \R^2 \times \left\{1\right\}$ and $\overline H := H \cup H'$. The tangle contact homology of $T$ together with a handle decomposition $h$ of the unit conormal bundle $\varLambda_T$ is defined in the same way as for a tangle in $\R^3_{x\geq 0}$, except that we use the total boundary $\overline H$ in place of $H$ in \cref{dfn:tch}.

		We let $h_1$ and $h_2$ be handle decompositions of $N(\varLambda_{T_1})$ and $N(\varLambda_{T_2})$, respectively, such that the handle decompositions along the common boundary $N(\partial \varLambda_{T_2}) \subset T^\ast H'$ agree. Like before we call such $h_1$ and $h_2$ \emph{gluing compatible}. The fact that $T = T_1 \#_{H'} T_2$ induces a gluing of $N(\varLambda_{T_1})$ and $N(\varLambda_{T_2})$ that respects the handle decompositions and gives $(N(\varLambda_T),h := h_1 \# h_2)$.

		Let $\mathcal G_{\overline H} := CE^\ast((T^\ast \overline H, h_{\overline H}); X)$ (see \cref{dfn:ce_hypersurface}). We let $\mathcal G_{T\to \overline H}$ ($\mathcal G_{T\to \overline H}$) denote the right (left) module over the free algebra on $\mathcal G_{\overline H}$. Then define 
		\[
			\mathcal G_{T \to \overline H \to T} := \mathcal G_{T \to \overline H} \otimes_{\mathcal G_{\overline H}} \mathcal G_{\overline H \to T},
		\]
		as modules.

		\begin{lma}
			Let $T \subset \R^2 \times [0,1]$ be a tangle and let $h$ be a handle decomposition of $N(\varLambda_T)$.
			\begin{enumerate}
				\item Elements of $\mathcal G_{T \to T}$ are in one-to-one correspondence with oriented binormal geodesic chords of $T \subset \R^2 \times [0,1]$.
				\item Elements of $\mathcal G_{T \to \overline H \to T}$ are in one-to-one correspondence with oriented binormal geodesic chords of $\partial T \subset \overline H$ and words of oriented binormal geodesic chords in $\R^2 \times [0,1]$ of the form $T \to \overline H \to \cdots \to \overline H \to T$.
			\end{enumerate}
		\end{lma}
		\begin{proof}
			This is the same proof as \cref{cor:top_desc_of_generators_tch}. The fact that $\overline H$ is disconnected in this setup does not change the proof.
		\end{proof}
		\begin{thm}
			Let $T \subset \R^2 \times [0,1]$ be an oriented tangle. Let $h$ be a handle decomposition of $N(\varLambda_T)$. For generic choices of metrics and almost complex structures we have a quasi-isomorphism of dg-algebras
			\[
				KCC^\ast(T,h) \cong \left\langle \mathcal G_{T \to T}\right\rangle \ast \left\langle \mathcal G_{T \to \overline H\to T}\right\rangle.
			\]
		\end{thm}
		\begin{proof}
			This is the same proof as \cref{thm:top_desc_tch}. The fact that $\overline H$ is disconnected in this setup does not change the proof.
		\end{proof}
		\begin{lma}\label{lma:one-to-one_corr_gen_gluing_tch}
			Let $T_1 \subset \R^2 \times [0,1]$ and $T_2 \subset \R^3_{x\geq 1}$ be two gluing compatible oriented tangles and let $T := T_1 \#_{H'} T_2$. Then there are bijections of sets
			\[
				\mathcal G_{T\to T} \cong \mathcal G_{T_1 \to T_1} \cup \mathcal G_{T_2 \to T_2}, \qquad \mathcal G_{T \to H \to T} \cong \bigcup_{i \neq j \in \left\{1,2\right\}} \mathcal G_{T_i \to \overline H \to T_j}.
			\]
		\end{lma}
		\begin{proof}
			This is similar to the proof of \cref{lma:kcc_tangle_with_hypersurface}. Namely, it traces back to \cref{lma:surgery_desc_generators_relative}. The convex completion of $\R^3_{x\geq 0}$ is the Weinstein pair $(X,P)$. Splitting $\R^3_{x\geq 0}$ by the hypersurface $H' = \{x=1\}$ and then taking the convex completion yields the Weinstein pair $(X,P\sqcup P)$. Now, we need to compare the generators of the Chekanov--Eliashberg dg-algebra (with action bound, as in \cref{lma:surgery_desc_generators_relative}) before and after critical handle attachment.
		\end{proof}
		\begin{thm}\label{thm:gluing_tch}
			Let $T_1 \subset \R^2 \times [0,1]$ and $T_2 \subset \R^3_{x\geq 1}$ be two gluing compatible oriented tangles together with gluing compatible handle decompositions $h_1$ and $h_2$ of $N(\varLambda_{T_1})$ and $N(\varLambda_{T_1})$, respectively. Then for generic choices of metrics and almost complex structures we have a quasi-isomorphism of dg-algebras
			\[
				KCC^\ast(T,h_1\#h_2) \cong KCC^\ast(T_1,h_1) \ast_{KCC^\ast(\partial T_2,h_{\partial T_2})} KCC^\ast(T_2,h_2) \ast \left\langle \mathcal G_{T_1 \to \overline H \to T_2}\right\rangle \ast \left\langle \mathcal G_{T_2 \to \overline H \to T_1}\right\rangle.
			\]
		\end{thm}
		\begin{proof}
			By \cref{lma:one-to-one_corr_gen_gluing_tch} it follows that there is a one-to-one correspondence between generators. The rest of the proof is exactly the same as the proof of \cref{thm:gluing_free_product}.
		\end{proof}
		We say that $T_0 \subset \R^3_{x\geq 0}$, $T_1 \subset \R^2 \subset [0,1]$ and $T_2 \subset \R^3_{x\geq 1}$ are gluing compatible if $T_1$ and $T_2$ are gluing compatible, and if $T_0$ and $T := T_1 \#_{H'} T_2$ are gluing compatible.
		\begin{cor}\label{cor:total_gluing_kch}
			Let $T_0 \subset \R^3_{x\geq 0}$, $T_1 \subset \R^2 \subset [0,1]$ and $T_2 \subset \R^3_{x\geq 1}$ be gluing compatible oriented tangle and let $K := T_0 \#_H (T_1 \#_{H'} T_2)$. Let $h_0$, $h_1$ and $h_2$ be gluing compatible handle decompositions of $N(\varLambda_{T_0})$, $N(\varLambda_{T_1})$ and $N(\varLambda_{T_2})$ and let $h := h_0 \# h_1 \# h_2$. Then for generic choices of metrics and almost complex structures we have a quasi-isomorphism of dg-algebras
			\begin{align*}
				KCC^\ast(K, h) \cong &KCC^\ast(T_0,h_0) \ast_{KCC^\ast(\partial T_0,h_{\partial T_0})} KCC^\ast(T_1,h_1) \ast_{KCC^\ast(\partial T_2, h_{\partial T_2})} KCC^\ast(T_2,h_2) \\
				& \qquad \ast \bigast_{i \neq j \in \left\{0,1,2\right\}} \left\langle \mathcal G_{T_i \to \overline H \to T_j}\right\rangle.
			\end{align*}
		\end{cor}
		\begin{proof}
			Combining \cref{thm:gluing_free_product} with \cref{thm:gluing_tch} yields a quasi-isomorphism
			\begin{align*}
				KCC^\ast(K,h) \cong &KCC^\ast(T_0,h_0) \ast_{KCC^\ast(\partial T_0, h_{\partial T_0})} KCC^\ast (T_1,h_1) \ast_{KCC^\ast(\partial T_2, h_{\partial T_2})} KCC^\ast(T_2,h_2)\\
				& \qquad \ast \left\langle \mathcal G_{T_1 \to \overline H \to T_2}\right\rangle \ast \left\langle \mathcal G_{T_2 \to \overline H \to T_1}\right\rangle \ast \left\langle \mathcal G_{T_0 \to H \to T}\right\rangle \ast \left\langle \mathcal G_{T \to H \to T_0}\right\rangle,
			\end{align*}
			where $T := T_1 \#_{H'} T_2$. Next, we see that there are bijections
			\[
				\mathcal G_{T_0 \to H \to T} \cong \mathcal G_{T_0 \to \overline H \to T_1} \cup \mathcal G_{T_0 \to \overline H \to T_2}, \qquad \mathcal G_{T \to H \to T_0} \cong \mathcal G_{T_1 \to \overline H \to T_0} \cup \mathcal G_{T_2 \to \overline H \to T_0},
			\]
			by the same argument as in the proof of \cref{lma:one-to-one_corr_gen_gluing_tch}. Namely, compare Reeb chords before and after splitting by $H'$. The result follows.
		\end{proof}
	\section{Untangle detection}\label{sec:untangle_detection}
		In this section we prove that $KCC^\ast(T,h)$ for $h = h_1$ or $h = h_2$ as defined in \cref{sec:explicit_handle_decomp} detects the triviality of $T$. It relies on the result that knot contact homology detects the $r$-component unlink.
		\begin{lma}\label{lma:kch_detects_unlink}
			$KCH^\ast_{\mathrm{EENS}}(K)$ detects the $r$-component unlink up to ambient isotopy and mirroring for any $r \geq 1$.
		\end{lma}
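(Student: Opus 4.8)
The plan is to reduce the statement to the corresponding detection result for the degree-zero part of knot contact homology — the (framed) cord algebra — and then to a classical fact in $3$-manifold topology.

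First I would record that the degree-zero homology $KCH^0_{\mathrm{EENS}}(K)$ is isomorphic, as an associative ring, to the framed cord algebra of $K$, by \cite[Theorem~1.1]{cieliebak2017knot} (see also \cite[Section~2]{ng2008framed}); this applies directly to the fully non-commutative invariant of \cref{dfn:kch_eens}. Consequently a quasi-isomorphism $KCC^\ast_{\mathrm{EENS}}(K)\cong KCC^\ast_{\mathrm{EENS}}(K')$ induces an isomorphism of framed cord algebras, so it suffices to argue with the cord algebra. Next I would use that the framed cord algebra of a link $K$ determines the fundamental group $\pi_1(\R^3\setminus K)$ together with its peripheral system of meridian--longitude pairs, up to reversal of the ambient orientation; see \cite{ng2008framed,cieliebak2017knot} (and \cite{ekholm2018a} for the complete enhancement). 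The orientation ambiguity is already present at the level of $KCC^\ast_{\mathrm{EENS}}$: an orientation-reversing isometry of $\R^3$ induces a contactomorphism of $ST^\ast\R^3$ sending $\varLambda_K$ to $\varLambda_{\overline K}$, whence $KCC^\ast_{\mathrm{EENS}}(K)\cong KCC^\ast_{\mathrm{EENS}}(\overline K)$ for every $K$; this is the origin of the ``up to mirroring'' clause. For the $r$-component unlink the complement has $\pi_1(\R^3\setminus K)\cong F_r$, the free group of rank $r$, so if $KCC^\ast_{\mathrm{EENS}}(K)$ is quasi-isomorphic to that of the $r$-component unlink then $\pi_1(\R^3\setminus K)$ is free, and of rank $r$ since the rank is pinned down by $H_1$ (equivalently by the number of components).

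Finally I would invoke the classical fact that a link $K\subset S^3$ with $\pi_1(S^3\setminus K)$ free is the unlink. Using the sphere theorem one splits $K$ along a maximal system of essential $2$-spheres into non-split sublinks; the exterior of each non-split sublink is irreducible, and if such a sublink were non-trivial its group would contain a peripheral $\Z^2$, contradicting freeness; hence every piece is the unknot and reassembling shows $K$ is the $r$-component unlink. (For $r=1$ this is the classical unknot recognition $\pi_1(S^3\setminus K)\cong\Z\Rightarrow K$ unknot, via the loop theorem and Dehn's lemma.) Since the $r$-component unlink is amphichiral the mirror ambiguity is vacuous for it, so $K$ is ambient isotopic to the $r$-component unlink; this combination of inputs is essentially \cite{gordon2017knot} (cf.\ \cite[Corollary~1.5]{cieliebak2017knot}).

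The main point requiring care is bookkeeping rather than a new argument: one must be sure it is the framed cord algebra — and hence $\pi_1$ with its peripheral data — that is recovered from $KCH^0_{\mathrm{EENS}}$, not merely its abelianization, and one must note that the implication ``$\pi_1(S^3\setminus K)$ free $\Rightarrow$ $K$ unlink'' genuinely uses the sphere theorem, since link exteriors need not be irreducible. All of these facts are available in the cited literature, so no further work is needed.
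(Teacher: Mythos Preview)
Your approach is plausible but differs substantially from the paper's. The paper argues purely algebraically: extending \cite[Proposition~2.21]{cieliebak2017knot} from knots to links, it identifies $KCH^0_{\mathrm{EENS}}(K)$ for a non-trivial link $K$ with a subring $\mathfrak R\subset\Z[\pi_1(\R^3\setminus K)]$; since link groups are locally indicable (Howie) the group ring is an integral domain (Higman), hence so is $\mathfrak R$. On the other hand $KCH^0_{\mathrm{EENS}}$ of the $r$-unlink is computed explicitly as $\Z[\lambda_i^{\pm1},\mu_i^{\pm1}]/\langle(1-\lambda_i)(1-\mu_i)\rangle$, which has visible zero divisors, so the two cannot be isomorphic. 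No reconstruction of $\pi_1$ and no $3$-manifold topology enters; the sphere theorem is never used.

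The weak point in your route is the assertion that the framed cord algebra of a \emph{link} determines $\pi_1(\R^3\setminus K)$ (let alone its peripheral system). The references you cite---\cite{ng2008framed,cieliebak2017knot}, with \cite{ekholm2018a} for the enhanced invariant---establish this for knots; the link case is not in those papers and would need to be supplied. The paper's integral-domain argument sidesteps this entirely: it only needs that $KCH^0$ embeds in $\Z[\pi_1]$, not that $\pi_1$ can be recovered from it, together with local indicability of link groups---both of which extend to links with no new ideas. So the paper's argument is the more economical one here, and avoids exactly the step you flag as ``bookkeeping''; your route, if the $\pi_1$-recovery for links were established, would yield strictly more information and is closer in spirit to the approach of \cite{gordon2017knot}.
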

		\begin{proof}
			This is already known in the case $r = 1$ by \cite[Corollary 1.5]{cieliebak2017knot}. The same argument as in \cite[Proposition 2.21]{cieliebak2017knot} and \cite[Proof of Corollary 1.5]{cieliebak2017knot} (cf.~\cite[Remark 5]{gordon2017knot}) can be used for $r > 1$, which we employ below.

			Let $\mathfrak R$ denote the subring of $\boldsymbol k[\pi_1(\R^3 \setminus K)]$ generated by $\pi_1(\lambda_1^{\pm 1}, \mu_1^{\pm 1}, \ldots, \lambda_r^{\pm 1}, \mu_r^{\pm 1})$, and $\im(1- \mu_i)$ for $i\in \left\{1,\ldots,r\right\}$ where $1 - \mu_i$ denotes the map $\boldsymbol k[\pi_1(\R^3 \setminus K)] \longrightarrow \boldsymbol k[\pi_1(\R^3 \setminus K)]$ given by left multiplication by $1- \mu_i$. By \cite[Theorem 1.2 and Proposition 2.21]{cieliebak2017knot} we have that
			\begin{equation}\label{eq:main_result_celn_links}
				KCH^0_{\text{EENS}}(K) \cong \mathfrak R
			\end{equation}
			for framed oriented non-trivial knots in $\R^3$, i.e.\@ for $r = 1$. The outline of the proof is that $KCH^\ast_{\text{EENS}}(K)$ is first related to string homology in degree zero as defined in \cite[Section 2]{cieliebak2017knot}, and the string homology in degree zero is related to the cord algebra of $K$ which is known to be isomorphic to $\mathfrak R$, see \cite[Section 2.4]{cieliebak2017knot}. The key point is that both of these results remain true for $r > 1$ by the same proofs, which means that \eqref{eq:main_result_celn_links} still holds for $r > 1$.

			Next, by \cite[Lemma 2]{howie1985the} we have that $\pi_1(\R^3 \setminus K)$ is locally indicable and hence $\boldsymbol k[\pi_1(\R^3 \setminus K)]$ is an integral domain by \cite[Theorem 12]{higman1940the}. By computation we have that
			\[
				KCH^0_{\text{EENS}}(U) \cong \boldsymbol k[\pi_1(\lambda_1^{\pm 1}, \mu_1^{\pm 1}, \ldots, \lambda_r^{\pm 1}, \mu_r^{\pm 1})]/\left\langle((1- \lambda_i)(1- \mu_i) \suchthat i\in \left\{1,\ldots,r\right\}\right\rangle
			\]
			(see \cite{ekholm2013knot}) which is not an integral domain, and thus $KCH^\ast_{\text{EENS}}(K) \not\cong KCH^\ast_{\text{EENS}}(U)$.
		\end{proof}
		\begin{lma}\label{lma:unique_gluing_untangle}
				Suppose that $T_1$ and $T_2$ are two $r$-component tangles with $\partial T_1 = \partial T_2$ such that $T_2$ is trivial and $K := T_1 \# T_2$ is an $r$-component link. Then $K$ is the unlink if and only if $T_1$ is trivial.
			\end{lma}
			\begin{proof}
				Write $T_1 = \bigcup_{i=1}^r T_1^i$ and pick a Seifert surface $\varSigma_i$ of each $T_1^i$ with boundary $\partial \varSigma_i = \partial_T \varSigma_i \cup \partial_0 \varSigma_i$ where $\partial_T \varSigma_i = \varSigma_i \cap T_1^i$, $\partial_0 \varSigma_i = \varSigma_i \cap \partial \R^3_{x \geq 0}$ is an arc and $\partial_T \varSigma_i \cap \partial_0 \varSigma_i = \partial T_1^i$. Similarly, write $T_2 = \bigcup_{i=1}^r T_2^i$. By assumption $T_2$ is trivial so we may pick $r$ disjoint half disks $\varDelta_1,\ldots,\varDelta_r$ where each $\varDelta_i$ is a Seifert surface of $T_2^i$.

				Without loss of generality we may assume that $\partial \varDelta_i = \partial \varSigma_i$. Furthermore by assumption we have that $K$ is an $r$-component link, which means that we can construct an $r$-component Seifert surface of $K$ whose $i$-th component is the result of gluing together $\varSigma_i$ and $\varDelta_i$ along $\partial \R^3_{x\geq 0}$. We denote this component by $\varSigma_i \# \varDelta_i$. Thus, we have that $K$ is the $r$-component unlink if and only if $\bigcup_{i=1}^r (\varSigma_i \# \varDelta_i)$ is the union of $r$ disjoint embedded disks. It is clear that triviality of $T_1$ implies that $K$ is the unlink. For the converse, if $K$ is the unlink we can pick $r$ disjoint embedded circles, and removing $T_2$ from $K$ leaves $T_1$ together with $r$ disjoint embedded half-disks which means that $T_1$ is trivial.
			\end{proof}
			\begin{rmk}
				The assumption \cref{lma:unique_gluing_untangle} that $T_1 \# T_2$ is a link consisting of the same number of components as $T_1$ and $T_2$ is crucial, see \cref{fig:ambient_iso_after_gluing}.
				\begin{figure}[!htb]
					\centering
					\includegraphics{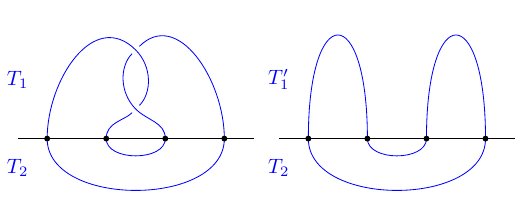}
					\caption{Two non-ambient isotopic tangles $T_1 \not\simeq T_1'$ such that $T_1 \# T_2 \simeq T_1' \# T_2$.}\label{fig:ambient_iso_after_gluing}
				\end{figure}
			\end{rmk}
			\begin{dfn}[Unknotted tangle]\label{dfn:unknotted_tangle}
				Let $T \subset \R^3_{x \geq 0}$ be an oriented tangle and let $\eta(x,y,z) := x$ denote the height function on $\R^3_{x\geq 0}$. We say that $T$ is \emph{unknotted} if there exists some tangle $T' \subset \R^3_{x\geq 0}$ such that $T \simeq T'$ and such that $\eta|_{T'}$ is a Morse function with no local minima away from $\partial \R^3_{x \geq 0}$.
			\end{dfn}
			\begin{rmk}
				Recall from \cref{dfn:trivial_tangle} that an unknotted tangle can still be non-trivial.
			\end{rmk}
			\begin{dfn}[Tangles of the same type]
				Let $T,T' \subset \R^3_{x\geq 0}$ be two oriented tangles. We say that $T$ and $T'$ are of the \emph{same type} if there exists a bijection $\varphi$ between the set of strands of $T$ and the set of strands of $T'$ such that each strand $T_i$ of $T$ has the same boundary as the strand $\varphi(T_i)$ of $T'$.
			\end{dfn}
			Note that given a tangle $T \subset \R^3_{x\geq 0}$ there is a unique trivial tangle of the same type as $T$. 

			We first show that tangle contact homology detects the triviality of $T$, assuming that $T$ is unknotted in \cref{sec:unknotted_tangles}. Afterwards in \cref{sec:general_case} we drop the unknottedness assumption.
		\subsection{Unknotted case}\label{sec:unknotted_tangles}
			\begin{lma}\label{lma:weak_gluing}
				Let $T, T' \subset \R^3_{x \geq 0}$ be two oriented tangles. Assume that $\mathcal G_{T \to H} \cong \mathcal G_{T' \to H}$ and $\mathcal G_{H\to T} \cong \mathcal G_{H\to T'}$ as right and left modules over $\mathcal G_H$, respectively. Then if there is a quasi-isomorphism of dg-algebras $KCC^\ast(T,h) \cong KCC^\ast(T',h')$ we have a quasi-isomorphism
				\[
					KCC^\ast(T \# T'', h \# h'') \cong KCC^\ast (T' \# T'', h' \# h'').
				\]
			\end{lma}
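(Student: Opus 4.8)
The plan is to feed both links $K := T\# T''$ and $K' := T'\# T''$ into the gluing formula \cref{thm:gluing_free_product} and then identify the two resulting descriptions term by term. Note first that for $T\# T''$ and $T'\# T''$ both to be defined we may take $\partial T$, $\partial T'$, $\partial T''$ and the corresponding boundary handle decompositions to be identified through $\partial T''$; write $h_\partial$ for this common boundary handle decomposition. Then \cref{thm:gluing_free_product} yields
\begin{align*}
	KCC^\ast(K, h\# h'') &\cong KCC^\ast(T,h) \ast_{KCC^\ast(\partial T, h_\partial)} KCC^\ast(T'',h'') \ast \langle \mathcal G_{T\to H\to T''}\rangle \ast \langle \mathcal G_{T''\to H\to T}\rangle, \\
	KCC^\ast(K', h'\# h'') &\cong KCC^\ast(T',h') \ast_{KCC^\ast(\partial T, h_\partial)} KCC^\ast(T'',h'') \ast \langle \mathcal G_{T'\to H\to T''}\rangle \ast \langle \mathcal G_{T''\to H\to T'}\rangle,
\end{align*}
with each right-hand side carrying the differential of the left-hand side, described via \cref{lma:gluing_presence_of_hypersurface_chords}.

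Next I would match the ingredients. The factor $KCC^\ast(T'',h'')$ is the same on both lines, the amalgamating subalgebra $KCC^\ast(\partial T, h_\partial)$ is the same (included via the canonical dg-subalgebra of \cref{lma:subalgebra_kch}), and the hypothesis provides a quasi-isomorphism $KCC^\ast(T,h)\cong KCC^\ast(T',h')$; one arranges, using the naturality of the boundary subalgebra, that this quasi-isomorphism is compatible with the inclusion of $KCC^\ast(\partial T, h_\partial)$. The module hypotheses $\overline{\mathcal G}_{T\to H}\cong\overline{\mathcal G}_{T'\to H}$ and $\overline{\mathcal G}_{H\to T}\cong\overline{\mathcal G}_{H\to T'}$ over $\mathcal G_H$, together with \eqref{eq:tensor_module_2}, give grading-preserving bijections $\mathcal G_{T\to H\to T''} = \overline{\mathcal G}_{T\to H}\otimes_{\mathcal G_H}\overline{\mathcal G}_{H\to T''}\cong\overline{\mathcal G}_{T'\to H}\otimes_{\mathcal G_H}\overline{\mathcal G}_{H\to T''} = \mathcal G_{T'\to H\to T''}$ and similarly $\mathcal G_{T''\to H\to T}\cong\mathcal G_{T''\to H\to T'}$, hence isomorphisms of the free algebras on these sets. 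Since (by \cref{rmk:category,rmk:category_2}) the underlying semi-free algebra of each right-hand side is the free algebra on the union of the generators of its factors, these identifications assemble to an isomorphism of the underlying graded algebras of the two right-hand sides.

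It remains to show this isomorphism carries the differential of $KCC^\ast(K, h\# h'')$ to that of $KCC^\ast(K', h'\# h'')$, which is the heart of the matter. I would argue as in the proofs of \cref{thm:gluing_formula_hypersurface_dga} and \cref{cor:quasi_iso_free_products}: the differential is the Chekanov--Eliashberg differential of $\varSigma_K(h\# h'')\cup\varSigma_P(h)$, and after neck-stretching along $T^\ast H$ every rigid $J$-holomorphic disk splits into simplicial pieces, each supported over the $T$-copy of $\R^3_{x\geq 0}$, over the $T''$-copy, or over the $T^\ast H$-neck. The pieces over the $T$-copy reassemble into the differential of $KCC^\ast(T,h)$ and are matched by the chosen quasi-isomorphism; the pieces over the $T''$-copy are exactly those of $KCC^\ast(T'',h'')$ and are unchanged; and the pieces meeting the neck are built solely from the $\mathcal G_H$-module structures on $\overline{\mathcal G}_{T\to H}$, $\overline{\mathcal G}_{H\to T}$ (and the fixed modules $\overline{\mathcal G}_{T''\to H}$, $\overline{\mathcal G}_{H\to T''}$), which agree with their $T'$-analogues by hypothesis. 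Hence the differentials correspond under the isomorphism above. The main obstacle is exactly this step: establishing that the glued differential is \emph{functorially} assembled out of the internal differentials of the two tangle dg-algebras and the geodesic-to-$H$ bimodules — so that quasi-isomorphic inputs and bijective bimodules force a quasi-isomorphic output — which amounts to re-running the SFT compactness and gluing analysis underlying \cref{thm:gluing_relative_legendrians} while keeping careful track of the $T$-dependence; the compatibility of the given quasi-isomorphism with the boundary dg-subalgebra is a subsidiary, more routine, technical point.
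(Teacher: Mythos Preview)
Your approach is essentially the same as the paper's: apply \cref{thm:gluing_free_product} to both $T\# T''$ and $T'\# T''$, use the tensor product definition \eqref{eq:tensor_module_2} together with the module hypotheses to identify $\mathcal G_{T\to H\to T''}\cong\mathcal G_{T'\to H\to T''}$ and $\mathcal G_{T''\to H\to T}\cong\mathcal G_{T''\to H\to T'}$, and conclude. The paper's proof is three lines and stops there; you have gone considerably further in trying to justify that the identification of underlying algebras intertwines the differentials, invoking neck-stretching and SFT compactness. The paper simply treats this as immediate from \cref{thm:gluing_free_product} and does not address it separately, so your additional paragraph is supplying detail the paper omits rather than taking a different route.
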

			\begin{proof}
				This follows from the definition of $\mathcal G_{T_i\to H\to T_j}$ in \eqref{eq:tensor_module_2}. Indeed, if $\mathcal G_{T \to H} \cong \mathcal G_{T' \to H}$ and $\mathcal G_{H\to T} \cong \mathcal G_{H\to T'}$ as right and left modules over $\mathcal G_H$, respectively, then it follows that 
				\[
					\mathcal G_{T\to H \to T''} \cong \mathcal G_{T'\to H \to T''}, \qquad \mathcal G_{T''\to H \to T} \cong \mathcal G_{T''\to H \to T'}
				\]
				from which the conclusion holds by \cref{thm:gluing_free_product}.
			\end{proof}
			\begin{lma}\label{lma:unknotted_same_geodesics}
				Let $T \subset \R^3_{x\geq 0}$ be an oriented and unknotted tangle and let $U \subset \R^3_{x\geq 0}$ be the trivial tangle of the same type as $T$. Then $\mathcal G_{T \to H} \cong \mathcal G_{U \to H}$ and $\mathcal G_{H\to T} \cong \mathcal G_{H\to U}$ as right and left modules over $\mathcal G_H$.
			\end{lma}
			\begin{proof}
				By an ambient isotopy, we can arrange so that each strand of $T$ lies very close to a strand of $U$, and in particular so that the set of oriented binormal geodesic chords between $T$ and $H$ are equal to the set of such between $U$ and $H$.
			\end{proof}
			\begin{lma}\label{thm:kcc_detects_untangle}
				Let $T \subset \R^3_{x\geq 0}$ be an oriented and unknotted tangle and let $U \subset \R^3_{x\geq 0}$ be the trivial tangle of the same type as $T$. Let $h$ be one of the handle decompositions $h_1$ and $h_2$ of $N(\varLambda_T)$ from \cref{sec:explicit_handle_decomp}. Then if there is a quasi-isomorphism $KCC^\ast(T,h) \cong KCC^\ast(U,h)$ then $T \simeq U$.
			\end{lma}
			\begin{proof}
				We will prove the contrapositive. Namely assume that $T \not\simeq U$. We can always find a splitting of the $r$-component unlink into two trivial $r$-component tangles $K = T_0 \# U$. Let $K' := T_0 \# T$. By \cref{lma:unique_gluing_untangle} it follows that $K \not \simeq K'$. Furthermore, because both $T$ and $U$ are unknotted tangles, we have $\mathcal G_{T \to H} \cong \mathcal G_{U \to H}$ and $\mathcal G_{H\to T} \cong \mathcal G_{H\to U}$ as right and left modules over $\mathcal G_H$ by \cref{lma:unknotted_same_geodesics}. Thus it follows by \cref{lma:kch_detects_unlink} that $KCC^\ast_{\text{EENS}}(K) \not \cong KCC^\ast_{\text{EENS}}(K')$ after which \cref{lma:weak_gluing} yields $KCC^\ast(T,h) \not\cong KCC^\ast(U,h)$.
			\end{proof}
			\subsection{General case}\label{sec:general_case}
				In this section we drop the assumption that $T \subset \R^3_{x \geq 0}$ is unknotted. Let $\mathring T := T \smallsetminus \partial T$ and $\mathring T = \bigcup_{i=1}^r \mathring T_i$. Let $\eta(x,y,z) := x$ be the height function defined on $\R^3_{x\geq 0}$. In general $\eta|_{\mathring T_i}$ has $b_i$ local minima and $b_i+1$ local maxima, and we may without loss of generality assume that the all local maximum values of $\eta|_{\mathring T}$ have value greater than $1$ and that all local minimum values of $h_{\mathring T}$ belong to $(0,1)$. 

				Then split $\R^3_{x\geq 0}$ along $H' = \{x=1\} \subset \R^3_{x\geq 0}$ to obtain a decomposition $T = T_1 \#_{H'} T_2$ where $T_2 \subset \R^3_{x\geq 1}$, $T_1 \subset \R^2 \times [0,1]$. Since $H' \subset \R^3_{x\geq 0}$ separates all local maximum values and local minimum values of $h$, it follows that $T_1$ and $T_2$ are unknotted in the sense of \cref{dfn:unknotted_tangle}.
				\begin{thm}\label{thm:untangle_detection}
					Let $T \subset \R^3_{x \geq 0}$ be an oriented tangle and let $U \subset \R^3_{x\geq 0}$ be the trivial tangle of the same type of $T$. Let $h$ be one of the handle decompositions $h_1$ and $h_2$ of $N(\varLambda_T)$ from \cref{sec:explicit_handle_decomp}. Then if there is a quasi-isomorphism $KCC^\ast(T,h) \cong KCC^\ast(U,h)$ then $T \simeq U$.
				\end{thm}
				\begin{proof}
					Write $T$ as the result of gluing the two unknotted tangles $T_1 \subset \R^2 \times [0,1]$ and $T_2 \subset \R^3_{x\geq 0}$, as described above. Letting $U_i$ be the trivial tangle in $\R^2 \times [0,1]$ and $\R^3_{x\geq 0}$, respectively, that is of the same type as $T_i$. Then $U = U_1 \#_{H'} U_2$. Now we proceed as in \cref{thm:kcc_detects_untangle}.

					Namely, pick a splitting of the $r$-component unlink into two trivial $r$-component tangles $K = T_0 \# U$ and let $K' := T_0 \# T = T_0 \# (T_1 \#_{H'} T_2)$. Assuming $T \not \simeq U$ it follows by \cref{thm:kcc_detects_untangle} that $K \not \simeq K'$ and hence $KCC^\ast_{\mathrm{EENS}}(K) \not \cong KCC^\ast_{\mathrm{EENS}}(K')$.

					In the decompositions $K = T_0 \# T_1 \#_{H'} T_2$ and $K' = T_0 \# U_1 \#_{H'} U_2$, each of $T_i$ and $U_i$ is an unknotted tangle, and hence by an ambient isotopy we can arrange so that each strand of $T_i$ lies very close to a strand of $U_i$ so that the set of oriented binormal geodesic chords between $T_i$ and $\overline H$ is equal to the set of such between $U_i$ and $\overline H$. The conclusion from this is that we have an isomorphism
					\begin{align*}
						\mathcal G_{T_i \to \overline H \to T_j} \cong \mathcal G_{U_i \to \overline H \to U_j},
					\end{align*}
					as modules over $\mathcal G_{\overline H}$ for $i\neq j \in \left\{0,1,2\right\}$ (where $U_0 := T_0$). Combining \cref{cor:total_gluing_kch} with \cref{thm:gluing_tch} we thus have
					\begin{align*}
						&KCC^\ast(T_0,h_0) \ast_{KCC^\ast(\partial T_0, h_{\partial T_0})} KCC^\ast(T,h) \ast \bigast_{\substack{i \neq j \in \left\{0,1,2\right\} \\ (i,j) \neq (1,2),(2,1)}} \left\langle \mathcal G_{T_i \to \overline H \to T_j}\right\rangle \\
						&\not\cong KCC^\ast(T_0,h_0) \ast_{KCC^\ast(\partial T_0, h_{\partial T_0})} KCC^\ast(U,h) \ast \bigast_{\substack{i \neq j \in \left\{0,1,2\right\} \\ (i,j) \neq (1,2),(2,1)}} \left\langle \mathcal G_{U_i \to \overline H \to U_j}\right\rangle,
					\end{align*}
					from which we obtain $KCC^\ast(T,h) \not\cong KCC^\ast(U,h)$.
				\end{proof}
			We round off the section by showing that tangle contact homology does not stay invariant if we allow the boundary of the tangles to move around in $\partial \R^3_{x\geq 0}$, as a consequence of the triviality detection.
			\begin{dfn}[Ambient isotopy with moving boundary]
				We say that two tangles $T_1$ and $T_2$ in $\R^3_{x\geq 0}$ are \emph{ambient isotopic with moving boundary} if there is an isotopy $h_t \colon \R^3_{x \geq 0} \longrightarrow \R^3_{x \geq 0}$ such that 
				\begin{enumerate}
					\item $h_0 = \id$, and
					\item $h_1|_{\mathring T_1} = \mathring T_2$ where $\mathring T_i := T_i \smallsetminus \partial T_i$.
				\end{enumerate}
			\end{dfn}
			\begin{cor}
				The quasi-isomorphism class of $KCC^\ast(T,h)$ is not invariant under ambient isotopies with moving boundary of $T$.
			\end{cor}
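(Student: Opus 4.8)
The plan is to exhibit an explicit pair of tangles that are ambient isotopic with moving boundary but whose tangle contact homologies are not quasi-isomorphic, and to distinguish them by gluing with a trivial tangle and invoking unknot detection. Concretely, I would reuse the half-closure construction of \cref{ex:weak_detection}: take $\beta = \sigma_1^3 \in B_2$ and $\beta' = \sigma_1 \in B_2$, so that $\beta'$ is obtained from $\beta$ by a single crossing change, and set $T := T_\beta$ and $T' := T_{\beta'}$, drawn as in \cref{fig:weak_detect} with the strands arranged and the metric chosen (cf.~\cref{asm:orthogonality_chords}) so that $\mathcal G_{T \to H}$ and $\mathcal G_{H \to T}$ are each generated by one oriented binormal geodesic per strand. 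Exactly as in \cref{ex:weak_detection}, a crossing change does not alter these generators, so $\mathcal G_{T \to H} \cong \mathcal G_{T' \to H}$ and $\mathcal G_{H \to T} \cong \mathcal G_{H \to T'}$ as right and left $\mathcal G_H$-modules, respectively.

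Next I would record two properties of this pair. On the one hand, $T$ and $T'$ are ambient isotopic with moving boundary: since $\sigma_1^3 = \sigma_1 \cdot \sigma_1^2$, the tangle $T$ differs from $T'$ by a single full twist of two strands, and this full twist is undone by an ambient isotopy of $\R^3_{x\geq 0}$ that carries one of the two boundary points of $T$ once around the other inside a neighbourhood of $H$. This isotopy satisfies conditions (1) and (2) of \cref{dfn:equiv_of_tangles} and carries $T$ to $T'$, but it violates condition (3) because the moving endpoint leaves $\{y^2+z^2=1\}$ during the isotopy, so it is an ambient isotopy with moving boundary and nothing more. On the other hand, gluing with the trivial tangle $T_2$ that completes the braid closure yields the links $T \# T_2 = \widehat{\sigma_1^3}$, the trefoil, and $T' \# T_2 = \widehat{\sigma_1}$, the unknot.

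To conclude, fix $h = h_1$ and $h'' = h_2$ as in \cref{sec:explicit_handle_decomp}, so that $h_1 \# h_2$ has a single top handle. If there were a quasi-isomorphism $KCC^\ast(T, h_1) \cong KCC^\ast(T', h_1)$, then, because the relevant $\mathcal G_H$-modules agree, \cref{lma:weak_gluing} would give a quasi-isomorphism $KCC^\ast(T \# T_2, h_1 \# h_2) \cong KCC^\ast(T' \# T_2, h_1 \# h_2)$, hence by \cref{thm:kch_and_kch_eens_single_top_handle} a quasi-isomorphism $KCC^\ast_{\text{EENS}}(\text{trefoil}) \cong KCC^\ast_{\text{EENS}}(U)$ and in particular $KCH^\ast_{\text{EENS}}(\text{trefoil}) \cong KCH^\ast_{\text{EENS}}(U)$, contradicting \cref{lma:kch_detects_unlink} with $r = 1$. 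Therefore $KCC^\ast(T, h_1) \not\cong KCC^\ast(T', h_1)$ while $T$ and $T'$ are ambient isotopic with moving boundary, which is the assertion. (Alternatively, the last step can be phrased through \cref{thm:kcc_detects_untangle} and \cref{dfn:weak_detection}, since $T'$ is the untangle and $T$ is not by \cref{lma:unique_gluing_untangle}.)

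I expect the main obstacle to be purely bookkeeping: verifying that the two tangles can genuinely be positioned so that their modules of binormal chords to and from $H$ are isomorphic as $\mathcal G_H$-modules — which is precisely what makes \cref{lma:weak_gluing} (and \cref{dfn:weak_detection}) applicable — and that the twist-removing isotopy stays inside $\R^3_{x\geq 0}$ with endpoints on $H$ at all times. Both checks are elementary, and the module comparison is essentially the computation underlying \cref{ex:weak_detection}, so it can largely be cited rather than redone.
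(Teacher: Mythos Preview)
Your proposal is correct and follows essentially the same approach as the paper: both arguments run the proof of \cref{thm:kcc_detects_untangle} on a pair of tangles that are ambient isotopic with moving boundary but not with fixed boundary, using \cref{lma:weak_gluing} and unlink detection (\cref{lma:kch_detects_unlink}) to force $KCC^\ast(T,h)\not\cong KCC^\ast(T',h)$. The only difference is the choice of example---the paper points to the tangles in \cref{fig:ambient_iso_after_gluing}, whereas you manufacture an explicit pair via the half-closure construction of \cref{ex:weak_detection}; your version has the virtue of making the module comparison and the identification of the glued links completely transparent.
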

			\begin{proof}
				We can always find an ambient isotopy with moving boundary of $T$ with the trivial tangle of the same type as $T$, and \cref{thm:untangle_detection} shows that $KCC^\ast(T,h)$ does not stay invariant.
			\end{proof}

\section{Examples}\label{sec:ex_unknot}
	\begin{ex}[Unframed subalgebra]\label{ex:unframed_subalgebra}
		Let us first consider a single stranded tangle $T \subset \R^3_{x\geq 0}$. Its boundary is two points in $\partial \R^3_{x\geq 0} \cong \R^2$. The unframed knot contact homology $KCC^\ast_u(\partial T)$ is simply the knot contact homology (without homology coefficients) of two points $z_1,z_2 \in \R^2$. As $KCC^\ast_u(\partial T)$ is generated by oriented binormal geodesics, we know that there are two generators $c_1$ and $c_2$, both in degree $0$. Their differentials are also trivial. Hence $KCC^\ast_u(\partial T) \cong \boldsymbol k[c_1,c_2]$. Computing the degree and differential is done using the front projection of $\varLambda_{z_1}, \varLambda_{z_2} \subset ST^\ast \R^2 \cong J^1(S^1)$, which lie in $S^1 \times \R \cong \R^2 \setminus \left\{0\right\}$, see \cref{fig:unfarmed_subalgebra}.
		\begin{figure}[!htb]
			\centering
			\includegraphics{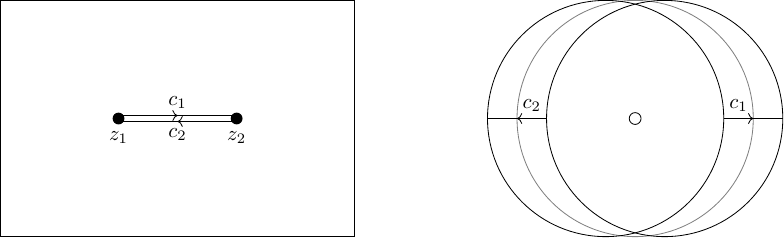}
			\caption{Left: The two oriented binormal geodesics of $\partial T \subset \R^2$. Right: The two front projections of $\varLambda_{z_1}$ and $\varLambda_{z_2}$ in $S^1 \times \R \cong \R^2 \setminus \left\{0\right\}$ with the two Reeb chords $c_1$ and $c_2$ drawn. The front projection of the zero section of $J^1(S^1)$ is the central gray circle.}\label{fig:unfarmed_subalgebra}
		\end{figure}

		We now give a surgery presentation of $KCC^\ast_u(\partial T)$. Namely, present $T^\ast \R^2$ as $B^4$ stopped at the standard unknot $\varLambda_{\text{unknot}} \subset S^3 = \partial B^4$, see also \cite[Example 7.4]{asplund2020chekanov}. This means that we present $T^\ast \R^2$ as the result of attaching a copy of $T^\ast (\varLambda_{\text{unknot}} \times \R_{\geq 0})$ to $B^4$ along a cotangent neighborhood of $\varLambda_{\text{unknot}}$, see \cref{fig:cotangent_plane_stopped}.
		\begin{figure}[!htb]
			\centering
			\includegraphics{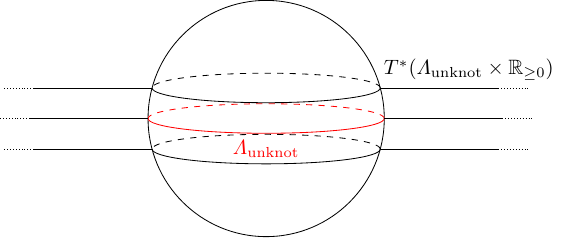}
			\caption{Surgery presentation of the open Weinstein sector $T^\ast \R^2$ as $B^4$ stopped at the standard unknot in $S^3$.}\label{fig:cotangent_plane_stopped}
		\end{figure}
		Now, we can use surgery techniques to give a quasi-isomorphic presentation of $KCC^\ast_u(\partial T)$ which is more amenable to proving the gluing formulas in this paper. Namely, removing the cotangent bundle $T^\ast(\varLambda_{\text{unknot}} \times \R_{\geq 0})$, we are left with the Legendrians $\varLambda_{z_1}, \varLambda_{z_2}, \varLambda_{\text{unknot}} \subset S^3 = \partial B^4$. By the results \cite[Theorem 5.10]{bourgeois2012effect} and \cite[Theorem 1.2]{ekholm2019holomorphic} it follows that $KCC^\ast_u(\partial T)$ is quasi-isomorphic to the dg-algebra generated by composable words of Reeb chords of the form
		\begin{equation}\label{eq:words_generators}
			\varLambda_{z_i} \to \overline{\varLambda}_{\text{unknot}} \to \cdots \to \overline{\varLambda}_{\text{unknot}} \to \varLambda_{z_j},
		\end{equation}
		for $i,j \in \left\{1,2\right\}$ where $\overline{\varLambda}_{\text{unknot}}$ is a Legendrian submanifold in the positive boundary of the Weinstein cobordism obtained by attaching a handle $(B^2 \times D_\varepsilon T^\ast [0,1], \frac 12(qdp-pdq) + 2xdy+ydx)$ to $B^4 \sqcup ((-\infty,0] \times B^2 \times \R)$, see \cite[Section 1.2]{asplund2020chekanov} for a general description. The Legendrian $\overline{\varLambda}_{\text{unknot}}$ is now defined in the same way as $\varSigma(h)$ in \cite[Equation (1.1)]{asplund2020chekanov}. The geometric meaning is that we consider loop space coefficients in the component $\varLambda_{\text{unknot}}$ only. The differential used in the dg-algebra generated by words of the form as in \eqref{eq:words_generators} is induced by the differential of the Chekanov--Eliashberg dg-algebra of the Legendrian link $\varLambda_{z_1} \cup \varLambda_{z_2} \cup \overline{\varLambda}_{\text{unknot}} \subset \partial B^4_{T^\ast D^2}$. After a Legendrian isotopy this link fits in a Darboux chart and the front projection in $\R^2$ is shown in \cref{fig:front_boundary_link}.
		\begin{figure}[!htb]
			\centering
			\includegraphics{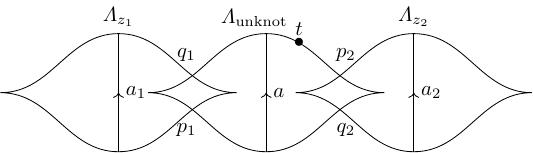}
			\caption{The front projection of the Legendrian link $\varLambda_{z_1} \cup \varLambda_{z_2} \cup \varLambda_{\text{unknot}}$ in a Darboux chart.}\label{fig:front_boundary_link}
		\end{figure}
		The Legendrian $\overline{\varLambda}_{\text{unknot}}$ consists of a $1$-dimensional disk $\ell_{\text{unknot}}$ with boundary $\partial \ell_{\text{unknot}} \subset B^2$ where $B^2$ is the subcritical part after a choice of handle decomposition of a Weinstein neighborhood of $\varLambda_{\text{unknot}}$. It follows from \cite[Lemma 3.4]{asplund2020chekanov} that Reeb chords of $\overline{\varLambda}_{\text{unknot}}$ corresponds to Reeb chords of $\ell_{\text{unknot}}$ that are contained in $S^3$ (this is the Reeb chord labeled by $a$ in \cref{fig:front_boundary_link}) and Reeb chords of $\partial \ell_{\text{unknot}} \subset B^2$. The Reeb chords of the latter kind corresponds to Reeb chords of two points in the boundary of $B^2$, and they form a dg-subalgebra described in detail in \cite[Section 2.3]{ekholm2015legendrian}, see also \cite[Example 7.3]{asplund2020chekanov} and references therein. With appropriate choices of Maslov potential, the two shortest Reeb chords corresponding to the point $t$ in \cref{fig:front_boundary_link} are denoted by $t^0_{12}$ and $t^1_{21}$ using the notation as in \cite[Example 7.3]{asplund2020chekanov}.

		Inside this Darboux chart there are two Reeb chords between $\varLambda_{z_1}$ and $\varLambda_{\text{unknot}}$ and also two Reeb chords between $\varLambda_{z_2}$ and $\varLambda_{\text{unknot}}$, labeled in \cref{fig:front_boundary_link} by $q_1,p_1$ and $q_2,p_2$, respectively. Note that since this Legendrian link is in $S^3$, there are in general more Reeb chords that necessarily leaves the Darboux chart. Using \cite[Lemma 4.6]{ekholm2015legendrian} and an action argument like the proof of \cite[Lemma 5.6]{ekholm2015legendrian} we obtain that $KCC^\ast_u(\partial T)$ is quasi-isomorphic to the dg-algebra generated by composable words formed by products of the following two kinds:
		\begin{enumerate}
			\item $a_1$ and $a_2$ both in degree $1$.
			\item The words $q_j c^k p_i$ for $k\geq 0$, $i,j\in \left\{1,2\right\}$, where $c$ is any Reeb chord $\overline{\varLambda}_{\text{unknot}}$, of degree $|c|$.
		\end{enumerate}
		The differential is induced by the differential of the Chekanov--Eliashberg dg-algebra of $\varLambda_{z_1} \cup \varLambda_{z_2} \cup \overline{\varLambda}_{\text{unknot}}$ which can be computed from \cref{fig:front_boundary_link} to be as follows with coefficients in $\Z_2$
		\[
			\begin{cases}
				\partial a_1 = p_1q_1 \\
				\partial a_2 = p_2q_2 \\
				\partial a = e_{\text{unknot}}+t^0_{12}+q_1p_1+t^0_{12}q_2p_2 \\
				\partial q_i = \partial p_i = 0
			\end{cases},
		\]
		where $e_{\text{unknot}}$ denotes the idempotent corresponding to the component $\overline{\varLambda}_{\text{unknot}}$. By the invariance of the Chekanov--Eliashberg dg-algebra and the fact that the dg-algebra can be computed locally in a Darboux chart (see \cite[Section 5.2.E]{ekholm2015legendrian}), the just described dg-algebra is quasi-isomorphic to $KCC^\ast_u(\partial T)$.
	\end{ex}
	
	\begin{ex}[Framed subalgebra]\label{ex:framed_subalgebra}
		We upgrade \cref{ex:unframed_subalgebra} and now compute $KCC^\ast(\partial T, h_{\partial})$. The difference is now that we pick a handle decomposition $h_{\partial}$ of a chosen Weinstein neighborhood $N(\varLambda_{\partial T}) = N(\varLambda_{z_1}) \cup N(\varLambda_{z_2})$. We construct $\overline{\varLambda}_{z_i}$ for $i\in \left\{1,2\right\}$ in the same way as $\overline{\varLambda}_{\text{unknot}}$ in \cref{ex:unframed_subalgebra}. Computationally, we have two more infinite families of Reeb chords to take into account, indicated by the two points $t(1)$ and $t(2)$, see \cref{fig:front_boundary_link_framed}.
		\begin{figure}[!htb]
			\centering
			\includegraphics{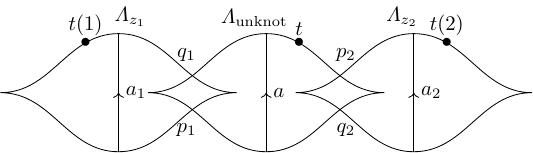}
			\caption{The front projection of the Legendrian link $\overline{\varLambda}_{z_1} \cup \overline{\varLambda}_{z_2} \cup \varLambda_{\text{unknot}}$ in a Darboux chart.}\label{fig:front_boundary_link_framed}
		\end{figure}
		By the general theory we now have that the dg-algebra $KCC^\ast(\partial T)$ is quasi-isomorphic to the dg-algebra generated by composable words formed by products of the following two kinds:
		\begin{enumerate}
			\item $a_1$ and $a_2$ both in degree $1$
			\item Reeb chords $t(1)^p_{ij}$ and $t(2)^p_{ij}$ with notation as in \cite[Example 7.3]{asplund2020chekanov}.
			\item The words $q_j c^k p_i$ for $k\geq 0$, $i,j\in \left\{1,2\right\}$, where $c$ is any Reeb chord $\overline{\varLambda}_{\text{unknot}}$, of degree $|c|$.
		\end{enumerate}
		The differential is induced by the differential of the Chekanov--Eliashberg dg-algebra of $\overline{\varLambda}_{z_1} \cup \overline{\varLambda}_{z_2} \cup \overline{\varLambda}_{\text{unknot}}$ which can be computed from \cref{fig:front_boundary_link_framed} to be as follows with coefficients in $\Z_2$
		\begin{equation}\label{eq:boundary_dga_framed_differential}
			\begin{cases}
				\partial a_1 = e_1+t(1)^0_{12}+p_1q_1 \\
				\partial a_2 = e_2+t(2)^0_{12}+p_2q_2 \\
				\partial a = e_{\text{unknot}}+t^0_{12}+q_1p_1+t^0_{12}q_2p_2 \\
				\partial q_i = \partial p_i = 0
			\end{cases},
		\end{equation}
		where $e_i$ and $e_{\text{unknot}}$ denotes the idempotent corresponding to the components $\overline{\varLambda}_{z_i}$ and $\overline{\varLambda}_{\text{unknot}}$, respectively for $i\in \left\{1,2\right\}$.

		Above and in \cref{ex:unframed_subalgebra} we have used a description of $KCC^\ast(\partial T, h_\partial)$ which only uses Reeb chords of the front projecting appearing in the Darboux chart. Instead, we consider $\overline{\varLambda}_{\text{unknot}}$ and $\overline{\varLambda_{z_i}}$ before applying the Legendrian isotopy to make them fit in a Darboux chart. Namely, the Reeb chords of the link $\overline{\varLambda}_{z_1} \cup \overline{\varLambda}_{z_2} \cup \overline{\varLambda}_{\text{unknot}}$ come in two families of $S^1$-families, one corresponding to Reeb chords of $\overline{\varLambda}_{z_1} \cup \overline{\varLambda}_{\text{unknot}}$ and one corresponding to Reeb chords of $\overline{\varLambda}_{z_2} \cup \overline{\varLambda}_{\text{unknot}}$. After Morsification, the shortest Reeb chords are as indicated in the following diagram.
		\begin{equation}\label{eq:boundary_dga_diagram}
			\begin{tikzcd}[row sep=scriptsize]
				\overline{\varLambda}_{z_1} \lar[loop left]{a_1} \rar[yshift=5]{q_1, \widehat{q}_1} & \overline{\varLambda}_{\text{unknot}} \lar[yshift=-5]{p_1,\widehat{p}_1} \rar[yshift=-5,swap]{p_2, \widehat p_2} \uar[loop above]{a} & \overline{\varLambda}_{z_2} \lar[yshift=5,swap]{q_2, \widehat q_2} \rar[loop right]{a_2}
			\end{tikzcd}
		\end{equation}
		The differential (with coefficients in $\Z_2$) is given by the following.
		\[
			\begin{cases}
				\partial q_1 = \partial q_2 = \partial p_1 = \partial p_2 = 0 \\
				\partial \widehat q_1 = q_1 + t^0_{12}q_1 \\
				\partial \widehat q_2 = q_2 + t^0_{12}q_2 \\
				\partial \widehat p_1 = p_1 + p_1t^0_{12} \\
				\partial \widehat p_2 = p_2 + p_2t^0_{12} \\
				\partial a = e_{\text{unknot}} + t^0_{12} + q_1p_1 + t^0_{12}q_2p_2 \\
				\partial a_1 = e_{1} + t(1)^{0}_{12} + p_1q_1 \\
				\partial a_2 = e_{2} + t(2)^0_{12} + p_2q_2
			\end{cases}.
		\]
	\end{ex}
	\begin{ex}[The dg-subalgebra when $H \cong S^2$]\label{ex:framed_subalgebra_sphere}
		Consider a single stranded tangle $T \subset B^3$ where $B^3$ denotes the closed $3$-ball. Its boundary is two points in $H := \partial B^3 = S^2$. The dg-subalgebras $KCC^\ast(\partial T)$ and $KCC^\ast_u(\partial T, h_\partial)$ in this case can be recovered from those in \cref{ex:unframed_subalgebra} and \cref{ex:framed_subalgebra}, respectively, by setting $t^p_{ij} = 1$. Geometrically, this is due to the fact that $T^\ast S^2$ is the result of attaching a standard Weinstein handle (as opposed to a stop) on the standard Legendrian unknot $\varLambda_{\text{unknot}} \subset S^3 = \partial B^4$. Since there are binormal geodesic chords of $S^2$ in the ball, there are additional Reeb chord generators of the Legendrian $\ell_H$ due to \cref{lma:tcc_generators_hypersurface}. We will not need to know the differential of these Reeb chords, but only those of $\partial \ell_H$. The differential in the case with the framed dg-subalgebra is induced by the following
		\[
			\begin{cases}
				\partial a_1 = e_1+t(1)^0_{12}+p_1q_1 \\
				\partial a_2 = e_2+t(2)^0_{12}+p_2q_2 \\
				\partial a = q_1p_1+q_2p_2 \\
				\partial q_i = \partial p_i = 0
			\end{cases}.
		\]
		This is the same as in \eqref{eq:boundary_dga_framed_differential} after setting $t^0_{12} = e$, where $e$ is the idempotent corresponding to the unknot component that is equal to $\partial \ell_H$.
	\end{ex}
	\begin{ex}[Unframed trivial tangle]\label{ex:unframed_trivial_tangle}
		Let us now consider $T \subset \R^3_{x \geq 0}$ the trivial tangle as depicted in \cref{fig:trivial_tangle_ex}. The hypersurface $H$ is the graph of the function $f(x,y) = x^2-y^2$ with a saddle point at the origin.
		\begin{figure}[!htb]
			\centering
			\includegraphics{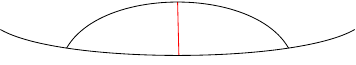}
			\hspace{2cm}
			\includegraphics[scale=0.75]{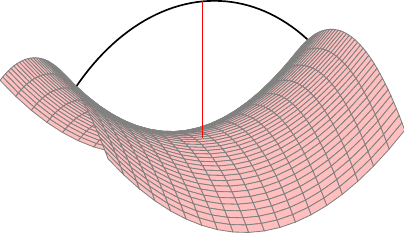}
			\caption{The trivial tangle in $\R^3_{x\geq 0}$. The vertical red line is the only unoriented binormal geodesic chord between $T$ and $H$.}\label{fig:trivial_tangle_ex}
		\end{figure}
		By \cref{lma:kcc_tangle_with_hypersurface} we know that $KCC^\ast_u(T)$ is quasi-isomorphic to the dg-algebra generated by oriented binormal geodesic chords between $T$ and $H$, since there are no binormal geodesic chords of $T$ and none of $H$. We also know that $KCC^\ast_u(\partial T) \subset KCC^\ast_u(T)$ is a dg-subalgebra that we have already described in \cref{ex:unframed_subalgebra}. The degree of both of the two Reeb chords $c_{T\to H}$ and $c_{H\to T}$ is $1$. The differential is given by
		\begin{equation}\label{eq:unframed_trivial_tangle}
			\begin{cases}
				\partial c_{T\to H} = q_1+q_2 \\
				\partial c_{H\to T} = p_1+p_2
			\end{cases}
		\end{equation}
		For each Reeb chord, the two $J$-holomorphic disks may be visualized in \cref{fig:trivial_tangle_ex} by imagining that the oriented binormal geodesic chord moving right or left and shrinks down to a point.

		We remark that replacing $\R^3_{x\geq 0}$ by $\overline{\R^3 \setminus B^3}$ gives the same result except that the dg-subalgebra is as described in \cref{ex:framed_subalgebra_sphere}. We imagine closing up the hypersurface $H$ to a large sphere inside $\R^3$.
	\end{ex}
	\begin{ex}[Framed trivial tangle]\label{ex:framed_trivial}
		Let us now consider $T \subset \R^3_{x \geq 0}$ the trivial tangle, see \cref{fig:trivial_tangle_ex}. We let $h_1$ and $h_2$ be the handle decomposition of $N(\varLambda_{T})$ as depicted in \cref{fig:handle_decomp1} and \cref{fig:handle_decomp2}, respectively. The subalgebra $KCC^\ast(\partial T, h_\partial)$ is as described in \cref{ex:framed_subalgebra}. 

		We first consider $KCC^\ast(T,h_2)$. There is one additional set of generators corresponding to $\eval[0]{\partial \ell_T}_{V_T}$, see \cref{fig:gen_with_hypersurface}. This set of generators is denoted by $\left\{\widehat t^p_{ij}\right\}$ and should be interpreted as follows. In $V_T$ there is a one-parameter family of dg-algebras all of which is quasi-isomorphic to the Chekanov--Eliashberg dg-algebra of two distinct points in $S^1$. After Morsification by a function with a maximum in the interior and two minima at the boundary of $\varLambda_T$. The generators $\left\{\widehat t^p_{ij}\right\}$ correspond to the maximum and $\left\{t(i)^p_{ij}\right\}$ correspond to the two minima; they are depicted in \cref{fig:generators_subalg_that}. Together $\left\{\widehat t^p_{ij}, t(1)^p_{ij}, t(2)^p_{ij}\right\}$ generate a dg-algebra that is quasi-isomorphic to $CE^\ast(\eval[0]{\partial \ell_T}_{V_T}, Q_T; V_T)$, see \cref{fig:front_boundary_link_framed} and \cref{fig:generators_subalg_that}. 
		\begin{figure}[!htb]
			\centering
			\includegraphics{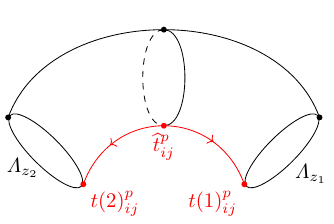}
			\caption{A sketch of the generators of $CE^\ast(\eval[0]{\partial \ell}_{V_T},Q_T;V_T)$ and their differential, using the handle decomposition $h_2$.}\label{fig:generators_subalg_that}
		\end{figure}
		The differential of $\widehat t^{p}_{ij}$ is given by
		\begin{equation}\label{eq:boundary_t_hat}
			\partial \widehat t^p_{ij} = t(1)^p_{ij}+t(2)^p_{ij} + G(\partial \widehat t^p_{ij}).
		\end{equation}
		where $G$ is defined as follows on monomials
		\begin{align*}
			G(\widehat t^{p_1}_{i_1j_1} \widehat t^{p_2}_{i_2j_2} \cdots \widehat t^{p_m}_{i_mj_m}) &= \widehat t^{p_1}_{i_1j_1} t(2)^{p_2}_{i_2j_2} \cdots t(2)^{p_m}_{i_mj_m} + t(1)^{p_1}_{i_1j_1} \widehat t^{p_2}_{i_2j_2} \cdots \widehat t(2)^{p_m}_{i_mj_m} \\
			&\quad + \cdots t(1)^{p_1}_{i_1j_1} t(1)^{p_2}_{i_2j_2} \cdots \widehat t^{p_m}_{i_mj_m},
		\end{align*}
		and extended to the whole algebra by linearity \cite[Corollary 5.6]{ekholm2008isotopies}.

		As in \cref{ex:unframed_trivial_tangle} we have two unoriented binormal geodesic chords $c_{T\to H}$ and $c_{H\to T}$ corresponding to generators of $KCC^\ast(T,h_2)$ of degree $1$. Their differential is given by the same formula as in the unframed case, namely as in \eqref{eq:unframed_trivial_tangle}.

		Similar to the above we now describe $KCC^\ast(T,h_1)$. The dg-algebra $CE^\ast(\eval[0]{\partial \ell}_{V_T},Q_T;V_T)$ is now more complicated and we do not fully describe it. We remark however that if setting $t(1)^p_{ij} = t(2)^p_{ij}$, it is quasi-isomorphic to the one exhibited in \cite[Section 2.5]{ekholm2015legendrian} which is the Chekanov--Eliashberg dg-algebra of the top attaching sphere of $T^\ast T^2$ equipped with its standard handle decomposition consisting of one $0$-handle, two $1$-handles and one $2$-handle.

		Furthermore, there is a certain dg-subalgebra of $CE^\ast(\eval[0]{\partial \ell}_{V_T},Q_T;V_T)$ that is generated by $\left\{c^p_{ij}\right\}$ which again is quasi-isomorphic to the Chekanov--Eliashberg dg-algebra of two points in $S^1$, see \cref{fig:generators_subalg_hat}
		\begin{figure}[!htb]
			\centering
			\includegraphics{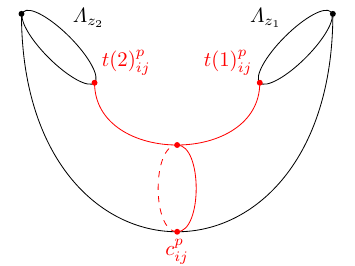}
			\caption{A sketch of some of the generators of $CE^\ast(\eval[0]{\partial \ell}_{V_T},Q_T;V_T)$ using the handle decomposition $h_1$.}\label{fig:generators_subalg_hat}
		\end{figure}
		As above we have two unoriented binormal geodesic chords $c_{T\to H}$ and $c_{H\to T}$ corresponding to generators of $KCC^\ast(T,h_1)$ of degree $1$. Their differential is given by the following.
		\[
			\begin{cases}
				\partial c_{T\to H} = q_1+q_2c^0_{12} \\
				\partial c_{H\to T} = p_1+c^1_{21}p_2
			\end{cases}
		\]
		We remark that replacing $\R^3_{x\geq 0}$ by $\overline{\R^3 \setminus B^3}$ gives the same result except that the dg-subalgebra is as described in \cref{ex:framed_subalgebra_sphere}. We imagine closing up the hypersurface $H$ to a large sphere inside $\R^3$.
	\end{ex}
	\begin{ex}[Unknot]
		Let $K \subset \R^3$ be the unknot. First split $K$ into the gluing with $H$ that is (an affine transformation of) the graph of $f(x,y) = x^2-y^2$ of two tangles as shown in \cref{fig:unknot_example} and \cref{fig:3d_tangle}. Finally, we will close up $H$ to a large sphere so that $T_2$ is the tangle lying in the region bounded by the sphere in $\R^3$. 
		\begin{figure}[!htb]
			\centering
			\includegraphics{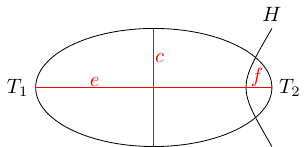}
			\caption{The unknot obtained by gluing of the two tangles $T_1$ and $T_2$.}\label{fig:unknot_example}
		\end{figure}
		\begin{figure}[!htb]
			\centering
			\includegraphics[scale=0.75]{tangle_ex_3d_1.pdf}
			\hspace{2cm}
			\includegraphics[scale=0.75]{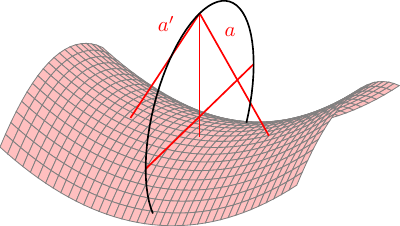}
			\caption{Left: A side view of the tangle $T_2$. Right: A side view of the tangle $T_1$ revealing two extra binormal geodesic chords $a$ and $a'$.}\label{fig:3d_tangle}
		\end{figure}

		Let $h_1$ be the handle decomposition of $N(\varLambda_{T_1})$ depicted in \cref{fig:handle_decomp2} and let $h_2$ be the handle decomposition of $N(\varLambda_{T_2})$ depicted in \cref{fig:handle_decomp1}. We already computed $KCC^\ast(T_2,h_2)$ in \cref{ex:framed_trivial} so we will focus on $KCC^\ast(T_1,h_1)$. From \cref{fig:unknot_example} we see that we have two unoriented binormal geodesic chords $c$ and $e$ corresponding to Reeb chords $c_1$, $c_2$, $e_{T_1\to H}$, $e_{H\to T_1}$, where $|c| = 1$ and $|e| = 2$. Furthermore we have that \cref{fig:3d_tangle} reveals two more unoriented binormal geodesic chords $a$ and $a'$ with $|a| = |a'| = 1$. The differentials of the external Reeb chords are given by (with coefficients in $\Z_2$) the following.
		\[
			\begin{cases}
				\partial a_{T_1 \to H} = q_1t(1)^0_{12}+q_2t(2)^0_{12} \\
				\partial a_{H\to T_1} = t(1)^0_{12}p_1+t(2)^0_{12}p_2 \\
				\partial a'_{T_1 \to H} = q_1t(1)^1_{21}+q_2t(2)^1_{21} \\
				\partial a'_{H\to T_1} = t(1)^1_{21}p_1+t(2)^1_{21}p_2 \\
				\partial c_1 = e_{1} + t(1)^1_{21} + p_1q_2 \\
				\partial c_2 = e_{1} + t(1)^1_{21} + p_2q_1 \\
				\partial e_{T_1 \to H} = q_1(c_1+a_1) + q_2(c_2+a_2+\widehat t^{1}_{21}) + a(q_1+q_2) + a_{T_1 \to H} + a'_{T_1 \to H} \\
				\partial e_{H\to T_1} = (c_2+a_1)p_1 + (c_1+a_2+\widehat t^{1}_{21})p_2 + (p_1+p_2)a + a_{H\to T_1} + a'_{H\to T_1}
			\end{cases}.
		\]
		The differentials of $c_1$ and $c_2$ closely resemble those in the knot contact homology of the unknot as described in \cite[Example 3.13]{ng2014a}. The differentials of $e_{T_1\to H}$ and $e_{H\to T_1}$ has many non-trivial contributions from holomorphic disks appearing as boundary degenerations which are introduced by the hypersurface $H$. There are also extra contributions involving the two unoriented binormal geodesic chords $a$ and $a'$ which are introduced by the hypersurface $H$.
	\end{ex}
\bibliographystyle{alpha}
\bibliography{tch}

\begin{thebibliography}{CDRGG17}

\bibitem[Abo12]{abouzaid2012on}
Mohammed Abouzaid.
\newblock On the wrapped {F}ukaya category and based loops.
\newblock {\em J. Symplectic Geom.}, 10(1):27--79, 2012.

\bibitem[AE22]{asplund2020chekanov}
Johan Asplund and Tobias Ekholm.
\newblock Chekanov--{E}liashberg dg-algebras for singular {L}egendrians.
\newblock {\em J. Symplectic Geom.}, 20(3):509--559, 2022.

\bibitem[{\'A}GEN22]{alvarez2020positive}
Daniel {\'A}lvarez-Gavela, Yakov Eliashberg, and David Nadler.
\newblock Positive arborealization of polarized {W}einstein manifolds.
\newblock {\em arXiv:2011.08962}, 2022.

\bibitem[Asp21]{asplund2019fiber}
Johan Asplund.
\newblock Fiber {F}loer cohomology and conormal stops.
\newblock {\em J. Symplectic Geom.}, 19(4):777--864, 2021.

\bibitem[Asp23]{asplund2021simplicial}
Johan Asplund.
\newblock Simplicial descent for {C}hekanov-{E}liashberg dg-algebras.
\newblock {\em J. Topol.}, 16(2):489--541, 2023.

\bibitem[Avd21]{avdek2021liouville}
Russell Avdek.
\newblock Liouville hypersurfaces and connect sum cobordisms.
\newblock {\em J. Symplectic Geom.}, 19(4):865--957, 2021.

\bibitem[BEE12]{bourgeois2012effect}
Fr{\'e}d{\'e}ric Bourgeois, Tobias Ekholm, and Yasha Eliashberg.
\newblock Effect of {L}egendrian surgery.
\newblock {\em Geom. Topol.}, 16(1):301--389, 2012.
\newblock With an appendix by Sheel Ganatra and Maksim Maydanskiy.

\bibitem[BNR16]{borodzik2016morse}
Maciej Borodzik, Andr\'{a}s N\'{e}methi, and Andrew Ranicki.
\newblock Morse theory for manifolds with boundary.
\newblock {\em Algebr. Geom. Topol.}, 16(2):971--1023, 2016.

\bibitem[CDRGG17]{chantraine2017geometric}
Baptiste Chantraine, Georgios Dimitroglou~Rizell, Paolo Ghiggini, and Roman
  Golovko.
\newblock Geometric generation of the wrapped {F}ukaya category of {W}einstein
  manifolds and sectors.
\newblock {\em arXiv:1712.09126}, 2017.

\bibitem[CELN17]{cieliebak2017knot}
Kai Cieliebak, Tobias Ekholm, Janko Latschev, and Lenhard Ng.
\newblock Knot contact homology, string topology, and the cord algebra.
\newblock {\em J. \'{E}c. polytech. Math.}, 4:661--780, 2017.

\bibitem[CGHH11]{colin2011sutures}
Vincent Colin, Paolo Ghiggini, Ko~Honda, and Michael Hutchings.
\newblock Sutures and contact homology {I}.
\newblock {\em Geom. Topol.}, 15(3):1749--1842, 2011.

\bibitem[Che02]{chekanov2002differential}
Yuri Chekanov.
\newblock Differential algebra of {L}egendrian links.
\newblock {\em Invent. Math.}, 150(3):441--483, 2002.

\bibitem[Dat]{dattinsutured}
C{\^o}me Dattin.
\newblock Sutured {L}egendrian stops and the conormal of surface braids.
\newblock {\em In preparation}.

\bibitem[Dat22a]{dattin2022sutured}
C{\^o}me Dattin.
\newblock Sutured contact homology, conormal stops and hyperbolic knots.
\newblock {\em arXiv:2206.07782}, 2022.

\bibitem[Dat22b]{dattin2022wrapped}
C{\^o}me Dattin.
\newblock Wrapped sutured {L}egendrian homology and unit conormal of local
  2-braids.
\newblock {\em arXiv:2206.11582}, 2022.

\bibitem[DR16]{dimitroglou2016lifting}
Georgios Dimitroglou~Rizell.
\newblock Lifting pseudo-holomorphic polygons to the symplectisation of
  {$P\times\mathbb{R}$} and applications.
\newblock {\em Quantum Topol.}, 7(1):29--105, 2016.

\bibitem[EENS13]{ekholm2013knot}
Tobias Ekholm, John~B. Etnyre, Lenhard Ng, and Michael~G. Sullivan.
\newblock Knot contact homology.
\newblock {\em Geom. Topol.}, 17(2):975--1112, 2013.

\bibitem[EES05]{ekholm2005the}
Tobias Ekholm, John Etnyre, and Michael Sullivan.
\newblock The contact homology of {L}egendrian submanifolds in
  {$\mathbb{R}^{2n+1}$}.
\newblock {\em J. Differential Geom.}, 71(2):177--305, 2005.

\bibitem[EES07]{ekholm2007legendrian}
Tobias Ekholm, John Etnyre, and Michael Sullivan.
\newblock Legendrian contact homology in {$P\times\mathbb{R}$}.
\newblock {\em Trans. Amer. Math. Soc.}, 359(7):3301--3335, 2007.

\bibitem[EGH00]{eliashberg2000introduction}
Y.~Eliashberg, A.~Givental, and H.~Hofer.
\newblock Introduction to symplectic field theory.
\newblock Number Special Volume, Part II, pages 560--673. 2000.
\newblock GAFA 2000 (Tel Aviv, 1999).

\bibitem[EK08]{ekholm2008isotopies}
Tobias Ekholm and Tam\'{a}s K\'{a}lm\'{a}n.
\newblock Isotopies of {L}egendrian 1-knots and {L}egendrian 2-tori.
\newblock {\em J. Symplectic Geom.}, 6(4):407--460, 2008.

\bibitem[Ekh18]{ekholm2018knot}
Tobias Ekholm.
\newblock Knot contact homology and open {G}romov-{W}itten theory.
\newblock In {\em Proceedings of the {I}nternational {C}ongress of
  {M}athematicians---{R}io de {J}aneiro 2018. {V}ol. {II}. {I}nvited lectures},
  pages 1063--1086. World Sci. Publ., Hackensack, NJ, 2018.

\bibitem[Ekh19]{ekholm2019holomorphic}
Tobias Ekholm.
\newblock Holomorphic curves for {L}egendrian surgery.
\newblock {\em arXiv:1906.07228}, 2019.

\bibitem[EL23]{ekholm2017duality}
Tobias Ekholm and Yank{\i} Lekili.
\newblock Duality between {L}agrangian and {L}egendrian invariants.
\newblock {\em Geom. Topol.}, 27(6):2049--2179, 2023.

\bibitem[Eli98]{eliashberg1998invariants}
Yakov Eliashberg.
\newblock Invariants in contact topology.
\newblock In {\em Proceedings of the {I}nternational {C}ongress of
  {M}athematicians, {V}ol. {II} ({B}erlin, 1998)}, number Extra Vol. II, pages
  327--338, 1998.

\bibitem[Eli18]{eliashberg2018weinstein}
Yakov Eliashberg.
\newblock Weinstein manifolds revisited.
\newblock In {\em Modern geometry: a celebration of the work of {S}imon
  {D}onaldson}, volume~99 of {\em Proc. Sympos. Pure Math.}, pages 59--82.
  Amer. Math. Soc., Providence, RI, 2018.

\bibitem[EN15]{ekholm2015legendrian}
Tobias Ekholm and Lenhard Ng.
\newblock Legendrian contact homology in the boundary of a subcritical
  {W}einstein 4-manifold.
\newblock {\em J. Differential Geom.}, 101(1):67--157, 2015.

\bibitem[ENS18]{ekholm2018a}
Tobias Ekholm, Lenhard Ng, and Vivek Shende.
\newblock A complete knot invariant from contact homology.
\newblock {\em Invent. Math.}, 211(3):1149--1200, 2018.

\bibitem[EO17]{ekholm2017symplectic}
Tobias Ekholm and Alexandru Oancea.
\newblock Symplectic and contact differential graded algebras.
\newblock {\em Geom. Topol.}, 21(4):2161--2230, 2017.

\bibitem[GL17]{gordon2017knot}
Cameron Gordon and Tye Lidman.
\newblock Knot contact homology detects cabled, composite, and torus knots.
\newblock {\em Proc. Amer. Math. Soc.}, 145(12):5405--5412, 2017.

\bibitem[GPS20]{ganatra2020covariantly}
Sheel Ganatra, John Pardon, and Vivek Shende.
\newblock Covariantly functorial wrapped {F}loer theory on {L}iouville sectors.
\newblock {\em Publ. Math. Inst. Hautes \'{E}tudes Sci.}, 131:73--200, 2020.

\bibitem[GPS24]{ganatra2022sectorial}
Sheel Ganatra, John Pardon, and Vivek Shende.
\newblock Sectorial descent for wrapped {F}ukaya categories.
\newblock {\em J. Amer. Math. Soc.}, 37(2):499--635, 2024.

\bibitem[Hig40]{higman1940the}
Graham Higman.
\newblock The units of group-rings.
\newblock {\em Proc. London Math. Soc. (2)}, 46:231--248, 1940.

\bibitem[HS85]{howie1985the}
James Howie and Hamish Short.
\newblock The band-sum problem.
\newblock {\em J. London Math. Soc. (2)}, 31(3):571--576, 1985.

\bibitem[Kar20]{karlsson2020legendrian}
Cecilia Karlsson.
\newblock Legendrian contact homology for attaching links in higher dimensional
  subcritical {W}einstein manifolds.
\newblock {\em arXiv:2007.07108}, 2020.

\bibitem[KM07]{kronheimer2007monopoles}
Peter Kronheimer and Tomasz Mrowka.
\newblock {\em Monopoles and three-manifolds}, volume~10 of {\em New
  Mathematical Monographs}.
\newblock Cambridge University Press, Cambridge, 2007.

\bibitem[Ng05a]{ng2005knoti}
Lenhard Ng.
\newblock Knot and braid invariants from contact homology. {I}.
\newblock {\em Geom. Topol.}, 9:247--297, 2005.

\bibitem[Ng05b]{ng2005knotii}
Lenhard Ng.
\newblock Knot and braid invariants from contact homology. {II}.
\newblock {\em Geom. Topol.}, 9:1603--1637, 2005.
\newblock With an appendix by the author and Siddhartha Gadgil.

\bibitem[Ng08]{ng2008framed}
Lenhard Ng.
\newblock Framed knot contact homology.
\newblock {\em Duke Math. J.}, 141(2):365--406, 2008.

\bibitem[Ng14]{ng2014a}
Lenhard Ng.
\newblock A topological introduction to knot contact homology.
\newblock In {\em Contact and symplectic topology}, volume~26 of {\em Bolyai
  Soc. Math. Stud.}, pages 485--530. J\'{a}nos Bolyai Math. Soc., Budapest,
  2014.

\bibitem[Wei91]{weinstein1991contact}
Alan Weinstein.
\newblock Contact surgery and symplectic handlebodies.
\newblock {\em Hokkaido Math. J.}, 20(2):241--251, 1991.

\end{thebibliography}
\end{document}